\documentclass[10pt]{amsart}

\usepackage{amsmath,amssymb,dsfont,mathrsfs}
\usepackage{amsthm}
\usepackage{url}

\usepackage{pgf,tikz,pgfplots}
\pgfplotsset{compat=1.13}
\usetikzlibrary{arrows}
\usepackage{multicol}
\usepackage{bm}
\usepackage[all,cmtip]{xy}
\usepackage{tcolorbox}
\usepackage{tikz-cd}

\usepackage{mathtools}
\usepackage{colonequals}

\usepackage{enumitem}
\usepackage{latexsym}

\usepackage[pagebackref=true]{hyperref}
\hypersetup{colorlinks=true,urlcolor=magenta,citecolor=magenta,linkcolor=magenta}

\renewcommand*{\backref}[1]{}
\renewcommand*{\backrefalt}[4]{%
  \ifcase #1 %
    \relax
  \or
    $\uparrow$#2.%
  \else
    $\uparrow$#2.%
  \fi%
}

\usepackage{makecell}
\newcommand{\mathbbords}{\mathbb}

\newcommand{\F}{\mathbbords{F}}

\newcommand{\Fpbar}{\overline{\mathbb{F}}_p}

\newcommand*{\MAGMA}{\textsf{Magma}}
\newcommand{\defi}[1]{\emph{#1}}

\usepackage{color}

\usepackage{float}
\usepackage{verbatim}

\theoremstyle{plain}

\counterwithin{equation}{section}
\counterwithin{figure}{section}
\counterwithin{table}{section}

\theoremstyle{plain}

\newtheorem{theorem}{Theorem}[section]
\newtheorem{proposition}[theorem]{Proposition}
\newtheorem{prop}[theorem]{Proposition}

\newtheorem{lemma}[theorem]{Lemma}
\newtheorem{corollary}[theorem]{Corollary}

\theoremstyle{definition}
\newtheorem{definition}[theorem]{Definition}
\newtheorem{example}[theorem]{Example}

\theoremstyle{remark}
\newtheorem{remark}[theorem]{Remark}

\title{Computing Invariants of Artin-Schreier Curves}

\author[J. Duque-Rosero]{Juanita Duque-Rosero}
\address{Juanita Duque-Rosero, Department of Mathematics and Statistics, Boston University, 665 Commonwealth Ave, Boston, MA 02215, USA}
\curraddr{}
\email{juanita@bu.edu}
\urladdr{\url{https://juanitaduquer.github.io}}

\author[E. Lorenzo Garc\'ia]{Elisa Lorenzo Garc\'ia}
\address{Elisa Lorenzo Garc\'ia, Aix-Marseille Univ, CNRS, I2M, Marseille, France}
\curraddr{}
\email{elisa.lorenzo-garcia@univ-amu.fr }
\urladdr{\url{https://sites.google.com/site/elisalorenzo}}

\author[B. Malmskog]{Beth Malmskog}
\address{Beth Malmskog, Department of Mathematics and Computer Science, Colorado College, 14 E Cache la Poudre, Colorado Springs, CO 80903, USA}
\curraddr{}
\email{bmalmskog@coloradocollege.edu}
\urladdr{\url{https://malmskog.wordpress.com}}

\author[R. Scheidler]{Renate Scheidler}
\address{Renate Scheidler, Department of Mathematics and Statistics, University of Calgary, 2500 University Drive NW, Calgary, Alberta, Canada T2N 1N4}
\curraddr{}
\email{rscheidl@ucalgary.ca}
\urladdr{\url{https://cpsc.ucalgary.ca/~rscheidl}}

\begin{document}

\keywords{Artin-Schreier curves, geometric invariant theory, moduli spaces, automorphisms.}
\subjclass{11G20, 14L24, 13A50, 
14G15, 14H10,
14H45, 14Q05}

\begin{abstract}
We present an algorithmic framework for computing generators for the ring of invariants of an Artin-Schreier curve.
We give explicit invariants for almost all Artin-Schreier curves of genus up to~8 in standard form, and for a handful of curves of higher genus.
\end{abstract}

\maketitle

\section{Introduction} \label{sec:Intro}

Artin-Schreier extensions \cite{artin1927kennzeichnung} are degree~$p$ Galois extensions of a field of prime characteristic $p$. For almost a century, their associated curves have undergone intense research due to their large point counts and often unusual automorphism groups \cite{van1992reed}, \cite{BHMSSV}, interesting arithmetic statistical behavior \cite{entin2012distribution,bucur2012distribution, bucur2016statistics} and compelling applications to coding theory \cite{van1991artin}. 

Invariant theory, an often challenging area with a venerable history dating back to the 19$^{\textbf{th}}$ century,  investigates polynomial functions that remain unchanged under the action of a group. Hilbert proved that the ring of invariants of a linearly reductive group acting on a polynomial ring over a characteristic zero field is finitely generated. Under mild conditions, the evaluation of these polynomials characterizes the orbits produced by a group action. Initially largely done via hand calculation, modern invariant theory utilizes group representations, coordinate rings of categorical quotients, moduli spaces, and other advanced tools in algebraic geometry.

In \cite{win6}, the authors computed systems of invariants over $\Fpbar$ for Artin-Schreier curves of genus $g\in\{3,4\}$ for $p>2$.  In this article, we present general algorithms to compute invariants of Artin-Schreier curves and explicitly parameterize the moduli spaces of Artin-Schreier curves in characteristic $p>2$ for $g\in\{5,6,7,8\}$ and two higher genera.  Our computational results are summarized in Table~\ref{tab:smallgenustable}. The \MAGMA~\cite{magma} implementation for these computations is available at~\cite{githubRepo}.

\begin{table}[ht]
\setlength{\tabcolsep}{10pt}
\begin{tabular}{l|c|c|c|l|c|c}
$g$ & $p$ & $D$ & $s$ & $(\vec{d}, \ \dim \mathcal{AS}_{g,\vec{d}})$ &$\#$ invariants & \shortstack{running time \\ (seconds)} \\ \hline
3   & 3   & 3   & 0   & (\{4\}, 1)        & 1  & 0.110             \\
    &     &     & 2   & (\{2,1\}, 2)      & 3  & 0.000             \\
    & 7   & 1   & 0   & (\{2\},0)         & 0  & 0.010             \\ \hline
4   & 3   & 4   & 0   & (\{5\}, 2)        & 3  & 0.000             \\
    &     &     & 2   & (\{2,2\}, 3)      & 3  & 0.000             \\
    &     &     & 4   & (\{1,1,1\}, 3)    & 4  & 0.030             \\
    & 5   & 2   & 0   & (\{3\}, 1)        & 1  & 0.000             \\
    &     &     & 4   & (\{1,1\}, 1)      & 1  & 0.000             \\ \hline
5   & 3   & 5   & 2   & (\{4,1\}, 3)      & 8 & 0.010             \\
    &     &     & 4   & (\{2,1,1\}, 4)    & 6  & 0.020             \\
    & 11  & 1   & 0   & (\{2\}, 0)        & 0  & 0.000             \\ \hline
6   & 3   & 6   & 0   & (\{7\}, 3),          & -&see Example~\ref{E:r0d7p3}         \\
    &     &     & 2   & (\{5,1\}, 4),     & 20 & 0.050             \\
    &     &     &     & (\{4,2\}, 4)      & 14 & 0.010             \\
    &     &     & 4   & (\{2,2,1\}, 5)    & 9  & 0.010             \\
    &     &     & 6   & (\{1,1,1,1\}, 5) & 6 &see Section~\ref{sec:fourPoles}              \\
    & 5   & 3   & 0   & (\{4\}, 2)        & 5  & 0.000             \\
    &     &     & 4   & (\{2,1\}, 2)      & 5  & 0.000             \\
    & 7   & 2   & 0   & (\{3\}, 1)        & 1  & 0.000             \\
    &     &     & 6   & (\{1,1\}, 1)      & 1  & 0.000             \\ 
    & 13  & 1   & 0   & (\{2\}, 0)        & 0  & 0.000             \\ \hline
7   & 3   & 7   & 0   & (\{8\}, 4)        & 24 & 2.540             \\
    &     &     & 2   & (\{5,2\}, 5)      &35 & 0.120             \\
    &     &     & 4   & (\{4,1,1\}, 5)    & 9  & 0.010             \\
    &     &     &     & (\{2,2,2\}, 6)    & - & see~\S\ref{subsec:3sameOrder}            \\
    &     &     & 6   & (\{2,1,1,1\}, 6)   &5&see Example~\ref{ex:2,2,2,3}            \\  \hline
8   & 3   & 8   & 2   & (\{7,1\}, 5)      & 40 & 19.710            \\
    &     &     &     & (\{4,4\}, 5)      & 32    & 1.840          \\
    &     &     & 4   & (\{5,1,1\}, 6)    & 12 & 0.010             \\
    &     &     &     & (\{4,2,1\}, 6)    & 21 & 0.010             \\
    &     &     & 6   & (\{2,2,1,1\}, 7)  & - & see Example~\ref{eg:2,2,1,1}                   \\
    &     &     & 8   & (\{1,1,1,1,1\} ,7)  & - &        see~Section~\ref{sec:5poles}       \\
    & 5   & 4   & 4   & (\{3,1\}, 3)   
    & 16 & 0.000             \\
    &     &     &     & (\{2,2\}, 3)      & 13 & 617.550             \\
    &     &     & 8   & (\{1,1,1\}, 3)    & 6  & 0.020             \\
    &17   & 1   &0    &(\{2\}, 0)         & 0  & 0.000
\end{tabular} \medskip
\caption{Partitions $\vec{d}$
as in 
\eqref{eq:ASgenus} for irreducible components 
of $\mathcal{AS}_{g,s}$ and their dimensions for $3\leq g\leq 8$ and $p\geq 3$.  Also, number of generators for the invariant ring 
and running time. }

\label{tab:smallgenustable}
\end{table}

\section{Background and Preliminaries}\label{sec:Background}

In this section we summarize the required notions and results on Artin-Schreier curves and invariant theory.

\subsection{Artin-Schreier curves}\label{sec:AScurves}

Throughout, let $\F_p$ be a finite field of odd prime order $p$ and $\Fpbar$ a fixed algebraic closure of $\F_p$. An \defi{Artin-Schreier curve} is a curve over $\Fpbar$ with an affine model of the form 
\begin{equation}\label{eqn:ASgeneralform}
C_f : y^p-y=f(x),
\end{equation}
where $f(x)\in\Fpbar(x)$ and $f(x) \ne z^p - z$ for any $z \in \Fpbar(x)$. We exclude $p = 2$ from consideration as Artin-Schreier curves in characteristic 2 are hyperelliptic; their invariants and moduli spaces are  well understood.

Suppose that $f(x)$ has $r+1$ distinct poles (with $r \ge 0$) of respective orders $d_1, d_2, \ldots d_{r+1}$.  We refer to the number of poles of a given order $d_i$ as the pole multiplicity. By \cite[Lemma 3.7.7~(b)]{stichtenoth2009algebraic}, we may assume that no $d_i$ is a multiple of $p$. 

By \cite[Proposition 3.7.8~(d)]{stichtenoth2009algebraic}, the genus $g = g(C_f)$ of $C_f$ is 
\begin{equation}\label{eq:ASgenus}
    g=\frac{p-1}{2}D, \quad \mbox{where} \quad D = 
     r - 1 + \sum_{i=1}^{r+1} d_i. 
\end{equation}
By \cite{subrao1975p}, the \emph{$p$-rank} of $C_f$, i.e.\ the integer $s$ such that the $p$-torsion of the Jacobian of~$C_f$ has order $p^s$, is $s=r(p-1)$ and satisfies $0 \le s \le g$. Denote by $\mathcal{AS}_{g}$ the moduli space of Artin-Schreier $\Fpbar$-curves of genus $g$ and by $\mathcal{AS}_{g,s}$ the locus corresponding to Artin-Schreier $\Fpbar$-curves of genus $g$ with $p$-rank exactly $s$. The space $\mathcal{AS}_{g}$ is stratified according to $p$-rank as follows.

\begin{theorem}[{\cite[Theorem~1.1]{PriesZhu2012}}]\label{theorem:PriesZhuDimension}
Let $g=D(p-1)/2$ with $D\geq 1$ and $s=r(p-1)$ with $r\geq 0$.     
    \begin{enumerate}
        \item The set of irreducible components of $\mathcal{AS}_{g,s}$ is in bijection with the set of partitions $\vec{d} = \{d_1, d_2, \ldots, d_{r+1}\}$ of $D+1-r$ 
        such that $p \nmid d_i$ for $1 \le i \le r+1$.
        \item For any partition $\vec{d} = \{d_1, d_2, \ldots, d_{r+1}\}$
        as given in part (1), the corresponding irreducible component $\mathcal{AS}_{g,\vec{d}}$ of $\mathcal{AS}_{g,s}$ has dimension 
        \[ \dim \mathcal{AS}_{g,\vec{d}}=D-1-\sum_{j=1}^{r+1} \lfloor d_j/p \rfloor. \]
    \end{enumerate}
\end{theorem}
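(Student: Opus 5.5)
The plan is to parametrize Artin-Schreier curves of fixed genus and $p$-rank by the data $(f, \text{pole positions})$, stratify this parameter space by the multiset of pole orders, and then compute the dimension of each stratum modulo the equivalences that give isomorphic curves.

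\textbf{Step 1: the partitions.} By \eqref{eq:ASgenus} and the $p$-rank formula $s=r(p-1)$, fixing $g$ and $s$ fixes $D$ and $r$. The $r+1$ pole orders then satisfy $\sum d_i = D+1-r$. By \cite[Lemma 3.7.7~(b)]{stichtenoth2009algebraic} we may take $p\nmid d_i$, so each curve in $\mathcal{AS}_{g,s}$ determines an unordered partition $\vec d$ of $D+1-r$ into $r+1$ parts prime to $p$. This invariant is well defined: the poles of $f$ in reduced form are exactly the branch points of $C_f\to\mathbb{P}^1$, and each $d_i$ is the ramification jump there. Because the jump is discrete, the locus $\mathcal{AS}_{g,\vec d}$ is a union of components. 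Conversely, every such partition is realized, for instance by $f=\sum_i c_i (x-a_i)^{-d_i}$ after a Möbius move.

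\textbf{Step 2: parametrization and irreducibility.} Move one pole to $\infty$, and write $f$ in partial fractions,
\[
f = \sum_{j} c_{\infty,j}x^j + \sum_{i=1}^{r}\sum_{j} c_{i,j}(x-a_i)^{-j}, \qquad j \le d_i .
\]
Kill every coefficient whose index $j$ is divisible by $p$. This uses $z^p-z$ substitutions, which replace $c\,t^{pk}$ by $c^{1/p}t^{k}$ plus a coboundary, and is valid over $\Fpbar$. The normalized forms are then parametrized by an open subset of affine space: the leading coefficients are nonzero and the $a_i$ are distinct. An open subset of affine space is irreducible, and its image in moduli is irreducible, so each $\mathcal{AS}_{g,\vec d}$ is irreducible. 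This gives the bijection in~(1).

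\textbf{Step 3: dimension count.} The parameter count is as follows. At the pole $i$ there are $d_i - \lfloor d_i/p\rfloor$ coefficients. There are $r$ finite pole positions $a_i$, plus $\infty$. Summing, we get $\sum d_i - \sum\lfloor d_i/p\rfloor + r = D+1-\sum\lfloor d_i/p\rfloor$.

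We then subtract the dimension of the group preserving the normal form. This group is the stabilizer of $\infty$ in $\mathrm{PGL}_2$, namely $x\mapsto ax+b$, of dimension $2$. The rescaling $y\mapsto \zeta y + h$ and the substitution $f\mapsto f + z^p-z$ contribute nothing further once normalized, apart from finite ambiguity: $f\mapsto \lambda f$ is allowed only for $\lambda\in\F_p^*$, and the constant term is absorbed. This gives $D-1-\sum\lfloor d_j/p\rfloor$.

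\textbf{The main obstacle} is justifying that this action has finite generic stabilizers and that no further isomorphisms exist, so that the count is not off by the constant term or the scaling. The constant coefficient ($j=0$) must be treated carefully: it is removable by $y\mapsto y+c$ with $c^p-c$ equal to it, so it should be excluded from the count, and correspondingly the $\infty$ pole contributes $d_{r+1}-\lfloor d_{r+1}/p\rfloor$ nonconstant terms. I would handle this in two parts. First, I would invoke the fact that any isomorphism of Artin-Schreier curves commuting with the $\mathbb{Z}/p$ covers a Möbius map of $\mathbb{P}^1$ (true when $g\ge2$ and the $\mathbb{Z}/p$ is normal; otherwise a finite-ambiguity argument). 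Second, I would check that the generic automorphism group is finite. Alternatively, as in \cite{PriesZhu2012}, one can compute the tangent space via deformation theory of the wild ramification at each branch point, obtaining local contributions $d_j-\lfloor d_j/p\rfloor$ and using Riemann–Hurwitz to assemble them.
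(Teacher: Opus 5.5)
The paper does not prove this statement; it is quoted from \cite{PriesZhu2012}, so your argument has to stand on its own. Steps~2 and~3 are fine in outline. The reduced representative of $f$ modulo $\{z^p-z\}$ is unique, and the reduced forms with one branch point at $\infty$ fill an open subset of affine space of dimension $D+1-\sum_j\lfloor d_j/p\rfloor$. Finiteness of $\operatorname{Aut}(C)$ for $g\ge 2$ gives finitely many $\mathbb{Z}/p$-structures per curve and finite stabilizers for $x\mapsto ax+b$. Hence each stratum $\mathcal{AS}_{g,\vec d}$ is irreducible of the stated dimension, and this gives part~(2) once part~(1) is in place.

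The gap is in part~(1), at the sentence ``because the jump is discrete, the locus $\mathcal{AS}_{g,\vec d}$ is a union of components.'' Discreteness proves nothing here: the $p$-rank is also discrete-valued, and it jumps under specialization. What you need is that no stratum lies in the closure of a different stratum inside $\mathcal{AS}_{g,s}$. This is the real content of~(1), because $\mathcal{AS}_{g,s}$ is in general not equidimensional. For $p=3$, $g=7$, $s=4$, the strata $\{4,1,1\}$ and $\{2,2,2\}$ have dimensions $5$ and $6$ (Table~\ref{tab:smallgenustable}). Nothing you wrote prevents the former from lying in the boundary of the latter, with the breaks redistributing while $\sum d_i$ stays fixed.

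The missing ingredient is local constancy of the individual breaks at constant $p$-rank. Since $s=r(p-1)$, a family inside $\mathcal{AS}_{g,s}$ over a connected base (e.g.\ a DVR, after finite base change) has exactly $r+1$ branch points in every fibre. So branch points cannot collide, and distinct generic branch points specialize to distinct special ones. The ramification divisor is cut out by $\pi^*\Omega_{\mathbb{P}^1_S/S}\to\Omega_{\mathcal{C}/S}$, i.e.\ by a section of a line bundle that is nonzero on every fibre because each fibre cover is separable. It is therefore a relative effective Cartier divisor, flat over $S$. Its degree over each branch section is the different exponent $(p-1)(d_i+1)$, so each $d_i$ is locally constant. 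Hence each $\mathcal{AS}_{g,\vec d}$ is open and closed in $\mathcal{AS}_{g,s}$, and only then does irreducibility give the bijection in~(1). This, not the stabilizer question you flag, is the main obstacle.
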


By \cite[Lemma~2.15]{win6}, every isomorphism between Artin-Schreier curves $C_f$ and $\widetilde{C}_{\tilde{f}}$ as given in~\eqref{eqn:ASgeneralform} whose corresponding function field isomorphism $\Fpbar(\tilde{C}_{\tilde{f}})\to\Fpbar(C_f)$ fixes $\Fpbar(x)$ has the form  
\begin{equation}\label{eq:ASisom}
\phi(x, y) = \left ( M(x), \lambda y + h(x) \right ) , \quad M(x) = \frac{\alpha x + \beta}{\gamma x + \delta},
\end{equation} 
where $\alpha, \beta, \gamma, \delta \in \Fpbar$ with $\alpha\delta-\beta\gamma \in \Fpbar^{\times}$, $\lambda \in \F_p^{\times}$, and $h(x) \in \Fpbar(x)$. 

To facilitate invariant computation, we only consider Artin-Schreier curves in \defi{standard form} as described in \cite[Theorem 3.3]{win6} and refined herein this paper. Every Artin-Schreier curve $C_f$ as given in~\eqref{eqn:ASgeneralform} is isomorphic to an Artin-Schreier curve in standard form, where the isomorphism sends the three poles of largest orders to the poles $P_\infty$, $P_0$, and $P_1$ of the respective functions~$x$, $1/x$, and $1/(x-1)$ in $\Fpbar(x)$. It also applies certain normalizations and eliminates appropriate $p$-power monomials.

\begin{theorem}[{\cite[Theorem~3.3]{win6}}]\label{T:normal}
Let $p$ be an odd prime and $C_f$ an Artin-Schreier curve over $\Fpbar$ as given in~\eqref{eqn:ASgeneralform} such that $f(x)$ has $r+1$ poles of respective orders $d_1 \ge d_2 \ge \ldots \ge d_{r+1}$. Then $C_f$ is isomorphic to an Artin-Schreier curve 
$C_g: y^p - y = g(x)$
in \defi{standard form} with $g(x) \in \Fpbar(x)$ given as follows.

\begin{enumerate}
\item \label{case:r=0}\textbf{Case $r = 0$:} $g(x) = F(x) \in \Fpbar[x]$ is monic of degree $d_1$, a multiple of $x^2$, and no monomial appearing in $F(x)$ has an exponent that is divisible by $p$. 

\item \label{case:r=1}\textbf{Case $r = 1$}: $\displaystyle g(x) = F(x) + G \left ( \frac{1}{x} \right )$,
where $F(x), G(x) \in \Fpbar[x]$, $F(x)$ is monic, $\deg(F) = d_1$, $\deg(G) = d_2$, and no monomial appearing in $F(x)$ or $G(x)$ has an exponent 
that is divisible by $p$.

\item \label{case:r>=2}\textbf{Case $r \ge 2$}: $\displaystyle g(x) = F(x) + G \left (\frac{1}{x} \right ) + H \left ( \frac{1}{x-1} \right ) + J(x)$, where $F(x)$, $G(x)$, $H(x) \in \Fpbar[x]$, $\deg(F) = d_1$, $\deg(G) = d_2$, $\deg(H) = d_3$, either $J(x) = 0$ or 
\[ J(x) = \sum_{i=4}^{r+1} \frac{J_i(x-\theta_i)}{(x-\theta_i)^{d_i}}, \] 
with $\theta_i \in \Fpbar \setminus \{ 0, 1 \}$, $J_i(x) \in \Fpbar[x]$ non-zero, $\deg(J_i) < d_i$, and no monomial appearing in $F(x)$, $G(x)$, $H(x)$, or any of the polynomials $x^{d_i} J_i(x^{-1})$ has an exponent that is divisible by~$p$, for $4 \le i \le r+1$.
\end{enumerate}
\end{theorem}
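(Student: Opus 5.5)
The plan is to assemble the normal form from three kinds of isomorphism, all of the shape \eqref{eq:ASisom}: Möbius transformations of $x$, scalings of $y$ by $\lambda\in\F_p^\times$, and translations $y\mapsto y+h(x)$.

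\textbf{Step 1: place the large poles.} Since $\mathrm{PGL}_2(\Fpbar)$ acts $3$-transitively on $\mathbb{P}^1(\Fpbar)$, we choose $M(x)$ sending the poles of orders $d_1,d_2,d_3$ to $P_\infty,P_0,P_1$. If $r=0$ or $r=1$, we only use the points that actually occur. Substituting $x\mapsto M(x)$ only permutes the poles and preserves their orders.

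\textbf{Step 2: partial fractions.} Decompose $f$ over $\Fpbar$ into its polar parts:
\[
f(x)=c+F(x)+G\left(\tfrac1x\right)+H\left(\tfrac1{x-1}\right)+\sum_{i\ge4}\frac{J_i(x-\theta_i)}{(x-\theta_i)^{d_i}} .
\]
The constant $c$ is absorbed by $y\mapsto y+b$ with $b^p-b=c$, which is solvable over $\Fpbar$.

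\textbf{Step 3: remove $p$-power monomials.} Induct downward on the exponent. If $a\,u^{kp}$ appears, where $u$ is one of the local parameters $x$, $1/x$, $1/(x-1)$ or $1/(x-\theta_i)$, set $y\mapsto y+a^{1/p}u^k$. This replaces $a\,u^{kp}$ by $a^{1/p}u^k$, of strictly lower degree in $u$, and leaves the other polar parts unchanged. The key fact is that Frobenius is bijective on $\Fpbar$. The process terminates, and it preserves the leading terms, because $p\nmid d_i$ by \cite[Lemma 3.7.7(b)]{stichtenoth2009algebraic}.

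\textbf{Step 4: normalize $F$.} For $r\ge1$, rescale $x\mapsto \mu x$ and choose $\mu$ with $\mu^{d_1}$ equal to the inverse of the leading coefficient, which is possible since $\Fpbar$ is algebraically closed. This makes $F$ monic. The rescaling moves $P_0$ and $P_\infty$ but not $P_1$; so for $r\ge2$ no such freedom remains and $F$ is not required to be monic, which matches the statement. In case $r=0$ we have the full affine group $x\mapsto \mu x+\nu$. Choose $\mu$ to make $F$ monic, then choose $\nu$ to kill the linear term, which leaves $F$ a multiple of $x^2$. First remove the constant term via Step 2.

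\textbf{Main obstacle.} I expect the main obstacle to be the case $r=0$. The translation in Step 4 must not reintroduce $p$-power monomials, so Step 3 has to be repeated afterwards. One must then check that the linear and constant coefficients remain zero. This holds because the reduction only lowers exponents $kp\mapsto k$ with $k\ge1$. It could create a new linear term only from an $x^p$ term, and that term is then removed.

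To kill the linear term, the coefficient of $x$ after the shift is $F'(\nu)$ up to $p$-power corrections, so we need $F'(\nu)=-(\text{correction})$ to be solvable. Since $\deg F'=d_1-1\ge1$ when $d_1\ge2$, this is solvable over $\Fpbar$. The case $d_1=1$ gives genus $0$ and is excluded.

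More carefully: first remove all $p$-power monomials. Then the linear coefficient after the shift $x\mapsto x+\nu$, after re-reduction, is a polynomial in $\nu$, namely $F'(\nu)$ plus the contribution $a_p(\nu)^{1/p}$ coming from the new $x^p$ coefficient. Rather than taking $p$-th roots, one can phrase the condition as $\bigl(F'(\nu)\bigr)^p+a_p(\nu)=0$. This is a nonconstant polynomial equation in $\nu$, since its degree is dominated by the term $p(d_1-1)$, so it has a root in $\Fpbar$.
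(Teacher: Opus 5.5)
Your argument is correct and is essentially the construction behind \cite[Theorem~3.3]{win6}, which this paper cites rather than reproves: move the three largest poles to $P_\infty,P_0,P_1$, take partial fractions, remove $p$-power monomials by $y\mapsto y+a^{1/p}u^k$ in each local parameter, rescale $x$ when $r\le 1$, and for $r=0$ kill the linear term by a translation $x\mapsto x+\nu$ whose parameter solves a polynomial equation (compare the discussion around \eqref{eq:C4}). Two small corrections: first, $x\mapsto\mu x$ fixes $P_0,P_\infty$ and moves $P_1$ (you wrote the reverse). Second, when $d_1\ge p^2$ the iterated reductions $x^{p^j}\mapsto x$ also contribute to the linear coefficient, so with $p^m\le d_1<p^{m+1}$ and $c_k(\nu)$ the $x^k$-coefficient of $F(x+\nu)$ the condition becomes $\sum_{j=0}^m c_{p^j}(\nu)^{p^{m-j}}=0$; this is still nonconstant, because $F'(\nu)^{p^m}$, of degree $p^m(d_1-1)$, strictly dominates every other term.
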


For consistency, we will use the following notation for the polynomials appearing in Theorem~\ref{T:normal} throughout.
\begin{equation} \label{eq:notation}
    F(x) = \sum_{i=1}^{d_1} a_i x^i, \quad G(x) = \sum_{i=1}^{d_2} b_i x^i, \quad H(x) = \sum_{i=1}^{d_3} c_i x^i, 
\end{equation}
with $a_i, b_i, c_i \in \Fpbar$, $a_{d_1}, b_{d_2},c_{d_3} \ne 0$, and $a_i = b_i = c_i = 0$ when $p \mid i$. If any of the polynomials is monic and contains only one term besides the leading term, we may omit its subscript; for example, we may write $F(x) = x^3 + a_2x^2$ as $F(x) = x^3 + ax^2$. 

The standard form in Theorem~\ref{T:normal} applies to any Artin-Schreier curve. However, in many cases, we will use alternative standard forms that lend themselves far better to computations.

\begin{remark} \label{rem:auto}
    If $\phi$ as given in \eqref{eq:ASisom} is an isomorphism between curves in standard form, then by \cite[Lemma 3.4]{win6}, the M\"obius transformation $M(x)$ uniquely determines $h(x)$ up to a multiple in $\F_p$. This will also hold for all the alternative standard forms introduced throughput the paper. 
    In this case, we will write $\phi(x,y) = \phi_{M,\lambda}(x,y)$. Moreover, we will disregard (post- or pre-)composition of $\phi$ with any power of standard form preserving automorphisms.
\end{remark}

\subsection{A brief introduction to invariant theory} \label{sec:Invariants} 
 
We follow the description in \cite{win6} and the classical references
\cite{basson, DK02, DK, Dol03, Eis95, Eis05}. 

Let $K$ be an algebraically closed field and $G$ be a linear algebraic group defined over $K$, acting on an algebraic variety $X$ over $K$. This action defines an action on $K[X]$ by $(g\cdot f)(x)=f(g^{-1}\cdot x)$ for all $x\in X$, $f\in K[X]$, and $g\in G$. An element $f\in K[X]$ is an \defi{invariant} for $X$ if $g\cdot f=f$ for all $g\in G$. The algebra of invariants is $K[X]^G\colonequals\{f\in K[X]:\,g\cdot f=f,\forall g\in G\}$. If $G$ is a geometrically reductive group acting on a finitely generated algebra $R$ over $K$, then $R^G$ is finitely generated as a $K$-algebra (Noether-Artin, Hilbert, Haboush–Nagata). 
In particular this result applies to finite groups in any characteristic.

\begin{lemma}[Noether, Fleischmann, Benson, Fogarty, {\cite[Cor. 3.8.4]{DK02}}]
\label{lem:bound} With the previous notation, if $\operatorname{char}(K)$ does not divide $|G|$, then there exists a set of generators of $K[V]^G$, all of degree smaller or equal than $|G|$.
\end{lemma}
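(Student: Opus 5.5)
This is the classical Noether bound, extended to the non-modular case by Fleischmann and Fogarty. My plan is to produce a graded, surjective, $G$-equivariant-compatible map onto $K[V]^G$ whose generators visibly have degree at most $|G|$. Throughout, $G$ is finite and $V$ is a finite-dimensional representation with coordinate functions $x_1,\dots,x_n$.

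\textbf{Step 1: the transfer.} Since $\operatorname{char}(K)\nmid |G|$, the Reynolds operator
\[ R(f)=\frac{1}{|G|}\sum_{\sigma\in G}\sigma\cdot f \]
is a degree-preserving, $K[V]^G$-linear projection of $K[V]$ onto $K[V]^G$. Every invariant $f$ satisfies $f=R(f)$. Writing $f$ as a combination of monomials $x^{\alpha}$ then shows that $K[V]^G$ is spanned by the elements $R(x^\alpha)$. It therefore suffices to prove that each $R(x^\alpha)$ with $|\alpha|>|G|$ is a polynomial in the $R(x^\beta)$ with $|\beta|\le|G|$.

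\textbf{Step 2: symmetric-function reduction.} In characteristic $0$, I would follow Noether's original argument. Introduce new variables $u_1,\dots,u_n$ and the linear forms $U_\sigma=\sum_i u_i(\sigma\cdot x_i)$ for $\sigma\in G$. The power sums $\sum_\sigma U_\sigma^k$ expand in the $u$'s with coefficients that are multinomial multiples of $R(x^\alpha)$, for $|\alpha|=k$. Newton's identities express $p_k$ for $k>|G|$ polynomially in $p_1,\dots,p_{|G|}$, because there are only $|G|$ quantities $U_\sigma$. Comparing coefficients of the $u$'s gives the claim. This argument fails when $0<\operatorname{char}(K)$, since the multinomial coefficients and the division in Newton's identities break down.

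\textbf{Step 3: the general non-modular case.} Here I would use Fogarty's and Benson's argument, which I expect to be the main obstacle. Let $I=K[V]^G_+K[V]$ be the Hilbert ideal. The key claim is that every monomial of degree greater than $|G|$ lies in $I$, that is, $K[V]_{>|G|}\subseteq I$. To prove it, fix $f_1,\dots,f_{|G|+1}$ of positive degree. The identity
\[ \prod_{\sigma\in G}(\sigma f_i - f_i)=0 \]
holds because the factor with $\sigma=1$ vanishes. Expanding and applying $R$ over suitable index sets, Benson's combinatorial lemma shows that $f_1\cdots f_{|G|+1}\in I$. This needs $|G|$ to be invertible, which is where the hypothesis on the characteristic enters.

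\textbf{Step 4: conclusion.} Given the key claim, I would argue by induction on degree. Any $R(m)$ with $\deg m>|G|$ can be written as $m=\sum_j h_j m_j$ with $h_j$ homogeneous invariants of positive degree. Then $R(m)=\sum_j h_j R(m_j)$, where each $R(m_j)$ has strictly smaller degree. By induction every invariant is generated by invariants of degree at most $|G|$.
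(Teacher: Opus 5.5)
Your route is the right one. The paper gives no proof of its own and only cites Derksen--Kemper, whose argument is Benson's Hilbert-ideal proof of the Fleischmann--Fogarty theorem, which is what you are aiming for. But Step~3 carries all the weight, and it is both missing its key idea and off by one.

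The identity $\prod_{\sigma\in G}(\sigma f_i-f_i)=0$ involves a single polynomial. Expanding it only tells you about the power $f_i^{|G|}$: it says $f_i$ is a root of $\prod_\sigma(T-\sigma f_i)$, so $f_i^{|G|}\in I$. Getting from powers to products of distinct factors would need polarization, i.e.\ $|G|!$ invertible, and that is exactly what fails when $0<\operatorname{char}K\le|G|$. The missing idea is to index $|G|$ polynomials by the group, $f_\sigma\in K[V]_+$ for $\sigma\in G$, and use
\[
\sum_{\tau\in G}\ \prod_{\sigma\in G}\bigl(f_\sigma-\tau\sigma(f_\sigma)\bigr)=0 ,
\]
where each summand vanishes because of its factor $\sigma=\tau^{-1}$. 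Expand over the set $S\subseteq G$ of factors contributing the second term, and write $\operatorname{Tr}=\sum_{\tau}\tau$. This gives $\sum_{S}(-1)^{|S|}\prod_{\sigma\notin S}f_\sigma\cdot\operatorname{Tr}\bigl(\prod_{\sigma\in S}\sigma(f_\sigma)\bigr)$. The term $S=\emptyset$ is $|G|\prod_{\sigma}f_\sigma$. Every other term lies in $I$, because it contains the transfer of a polynomial without constant term. Dividing by $|G|$ gives $\prod_{\sigma\in G}f_\sigma\in I$. That is a statement about products of $|G|$ factors, so $K[V]_{\ge|G|}\subseteq I$, which is stronger than your $K[V]_{>|G|}\subseteq I$.

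Step~4 needs that stronger containment. From $m\in I$ you get $m=\sum_jh_jm_j$, but nothing prevents some $h_j$ from having degree $\deg m$ (with $m_j$ constant). Your induction then says nothing about that $h_j$. Knowing only $K[V]_{>|G|}\subseteq I$, you can avoid this when $\deg m\ge|G|+2$ by writing $m=x_im'$ with $m'\in I$, but not when $\deg m=|G|+1$. So as written your argument proves only the bound $|G|+1$. In general, $K[V]_{\ge N}\subseteq I$ yields nothing better than $\beta\le N$: for $\mathbb{Z}/n$ acting on $K[x]$ by a primitive $n$-th root of unity, $K[x]_{\ge n}\subseteq I=(x^n)$ and $x^n$ is a required generator. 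With $K[V]_{\ge|G|}\subseteq I$ the fix is immediate. Write a monomial $m$ of degree $>|G|$ as $x_im'$ with $\deg m'\ge|G|$. Then $m'=\sum_jh_jm_j'$ with $\deg h_j<\deg m$, and $R(m)=\sum_jh_jR(x_im_j')$ closes the induction.
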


\begin{lemma}[Cor. 0.2 in \cite{symonds}]\label{lem:boundp} Let $G$ be a non-trivial finite group acting linearly on $R=k[ x_1,\dots,x_n]$ with $n>1$. Then $R^G$ is generated by invariants of degree  $\leq n(|G|-1)$  and the relations between the generators are generated in degrees at
most $2n(|G| - 1)$.  
\end{lemma}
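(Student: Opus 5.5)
This is Symonds' bound, and it is cited from \cite{symonds}; I would not attempt a self-contained proof. Instead I would explain how it follows from the Castelnuovo--Mumford regularity statement in that paper. The plan has four steps.

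First, consider $R=k[x_1,\dots,x_n]$ with its standard grading, and view $R^G$ as a graded subalgebra. Since $G$ is finite, $R$ is integral over $R^G$, so $R^G$ is finitely generated of Krull dimension $n$. Moreover, $R^G$ contains the Chern (orbit) classes $c_i$ of the linear forms: for each variable $x_j$, the coefficients of $\prod_{g}(t-g\cdot x_j)$ taken over the orbit of $x_j$ have degree $\le |G|$. These generate a subalgebra $A$ over which $R$, and hence $R^G$, is a finitely generated module. Choosing a homogeneous system of parameters inside $A$ gives $n$ invariants $f_1,\dots,f_n$ with $\deg f_i\le |G|$.

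Second, the key input is Symonds' main theorem: $\operatorname{reg}(R^G)\le 0$ for $n>1$, i.e.\ $\operatorname{reg}(R^G)\le 0$ regardless of $\operatorname{char} k$. This is the step I expect to be the genuine obstacle. Its proof uses the local cohomology of $R$ with respect to the ideal of $A$, a spectral sequence from the cohomology of $G$, and a delicate analysis relating the top-degree local cohomology of $R^G$ to that of $R$ via Frobenius-type twists. I would simply quote it.

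Third, I would deduce the generation bound from the regularity. Let $P=k[f_1,\dots,f_n]$. Then $R^G$ is a finite graded $P$-module. A standard argument with the Koszul complex on the $f_i$ (or an Eisenbud--Goto type bound) shows that the module generators of $R^G$ over $P$ lie in degrees at most
\[
\operatorname{reg}(R^G)+\sum_i(\deg f_i-1)\le n(|G|-1).
\]
Together with the $f_i$ themselves, whose degrees are at most $|G|\le n(|G|-1)$ because $n>1$, these give algebra generators of degree at most $n(|G|-1)$.

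Fourth, for the relations, present $R^G$ as a quotient of a polynomial ring $S$ over these generators. Applying the same regularity bound to the first syzygies, whose regularity is controlled by twice the degree bound, shows that the relation ideal is generated in degrees at most $2n(|G|-1)$.
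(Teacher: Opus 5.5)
Your route matches how Symonds deduces Corollary~0.2 from $\operatorname{reg}(R^G)\le 0$: choose an hsop $P=k[f_1,\dots,f_n]$ with $\deg f_i\le |G|$, then bound the $P$-module generators of $R^G$ by $\operatorname{reg}(R^G)+\sum_i(\deg f_i-1)$. The paper itself only cites the result, and your Step~3 is correct. Step~1 does not follow as written, though. An algebra generated in degrees $\le D$ need not have a homogeneous system of parameters in degrees $\le D$. For example, $k[u,v]/(uv)$ with $\deg u=2$, $\deg v=3$ has none of degree $\le 3$; you need something like $u^3+v^2$. So ``choosing an hsop inside $A$'' gives no degree bound. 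The input you need is Dade's theorem: over an infinite field, for suitably general linear forms $\ell_1,\dots,\ell_n$, the norms $\prod_{g\in G}g\ell_i$ form an hsop, each of degree $|G|$. Since invariants of a finite group commute with extension of scalars, you may enlarge $k$ first. (Minor point: Symonds' regularity theorem needs no hypothesis $n>1$. The hypotheses $n>1$ and $|G|>1$ only serve to absorb $\deg f_i=|G|$ into $n(|G|-1)$.)

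The genuine gap is Step~4. Over the polynomial ring $S$ on all the algebra generators, the analogue of your Step~3 bound carries the correction $\sum_j(\deg y_j-1)$ over \emph{all} variables of $S$, which is far larger than $2n(|G|-1)$. There is also no principle that the first syzygies have regularity controlled by twice the generator bound. The missing idea is to stay over $P$:
\begin{enumerate}
\item Let $h_0=1,h_1,\dots,h_s$ generate $R^G$ as a $P$-module, put $S=P[y_1,\dots,y_s]$ and $y_0=1$, and write $h_ih_j=\sum_k a_{ijk}h_k$ with $a_{ijk}\in P$.
\item Modulo the quadratic relations $y_iy_j-\sum_k a_{ijk}y_k$, every element of $S$ is congruent to a $P$-linear form $\sum_k c_ky_k$. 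Such a form is a relation exactly when $(c_0,\dots,c_s)$ is a $P$-syzygy of $(h_0,\dots,h_s)$.
\item The quadratic relations have degree $\deg h_i+\deg h_j\le 2n(|G|-1)$.
\item Let $t_1^P(R^G)$ be the top degree of $\operatorname{Tor}_1^P(R^G,k)$. The syzygy module is generated in degrees at most $t_1^P(R^G)\le \operatorname{reg}(R^G)+\sum_i(\deg f_i-1)+1\le n(|G|-1)+1$. This follows from the same Koszul--\v{C}ech comparison as in Step~3, which gives $t_m^P(M)\le \operatorname{reg}(M)+\sum_i(\deg f_i-1)+m$ for every $m$.
\end{enumerate}
Since $n(|G|-1)\ge 1$, the relations are generated in degree $\le 2n(|G|-1)$.
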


These bounds are key to designing algorithms for computing generators of invariant rings, such as \cite[Algorithms 3.5.4 and 3.7.2]{DK02} for the non-modular case. They can be extended to work in the modular case as discussed in \cite[\S 3.3 and 3.4.2]{DK02}; specifically in Algorithm 3.7.5 in loc.\ cit.\ and in \cite{DK}. These algorithms are implemented in \MAGMA~\cite{magma}. 

Denote by $\mathfrak{M}_m^+$ the set of primitive (not a power of another monomial) monomials of positive degree in the $m$ variables. Given $\mu\in \mathfrak{M}_m^+$, we denote by $s_{k,\mu}$ the symmetric function $s_k(\mu(x_{1,1},\ldots x_{m,1}),\dots, \mu(x_{1,n},\ldots, x_{m,n}))$. Let $\mathfrak{S}_n$ be the symmetric group on $n$ items. A describe a specific generating set is given as follows.  

\begin{theorem}[{\cite[Thm. 1]{multisym}}]\label{thm:multisym} Let $K$ be any field and set $R=K[x_{i,j}:i=1\dots m,j=1\dots n]$ and consider $G=\mathfrak{S}_n$ acting as $\sigma(x_{i,j})=x_{i,\sigma(j)}$. Assume $m > 1$. Then $R^G$ is generated by $s_{k,\mu}$ with $k=1\dots m$ and $\mu\in \mathfrak{M}_m^+$ with $k\cdot\deg(\mu)\leq m(n-1)$.
\end{theorem}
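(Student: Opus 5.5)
This is the classical theorem on multisymmetric functions, generators of $R^{\mathfrak{S}_n}$ with $\mathfrak{S}_n$ permuting the columns of an $m\times n$ matrix of variables. I would organize the proof in three steps.

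\textbf{Step 1: reduce to orbit sums.} Because $\mathfrak{S}_n$ permutes monomials, $R^{\mathfrak{S}_n}$ has a $K$-basis of orbit sums $o(u)=\sum_{u'\in \mathfrak{S}_n u}u'$. This holds over any field, since no averaging is needed. Write a monomial as $u=\prod_{j} \nu_j(x_{1,j},\dots,x_{m,j})$, where each $\nu_j$ is a monomial in $m$ variables, so $u$ corresponds to a multiset of at most $n$ nontrivial monomials $\nu_j$. It then suffices to show that every such orbit sum lies in the subalgebra $A$ generated by the proposed $s_{k,\mu}$.

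\textbf{Step 2: induct on the multiset.} Group the $\nu_j$ by equality and write each distinct one as $\mu^e$ with $\mu$ primitive. The first goal is to express $o(u)$ through elementary symmetric functions of the quantities $\mu(x_{\cdot,j})$. Take the product $\prod s_{k_\ell,\mu_\ell}$, where each block of equal columns $\nu$ occurring $k$ times contributes $s_{k,\nu}$. Its leading part is $o(u)$ up to a nonzero integer multiplicity coming from the sizes of the blocks. The remaining terms are orbit sums with fewer distinct column monomials, in which columns have merged.

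This is the step I expect to be the main obstacle, because in positive characteristic the multiplicity can vanish mod $p$. To avoid it, I would use the polarization/exponential approach of Vaccarino or Domokos rather than a naive leading-term comparison. The tool is the generating-function identity
\[
\prod_{j=1}^n\Bigl(1+\sum_{\nu} t_\nu\,\nu(x_{\cdot,j})\Bigr)
\]
taken over formal variables $t_\nu$, together with Amitsur's formula for $\det(1+\sum t_\nu X_\nu)$ over $\mathbb{Z}$. Its coefficients are exactly the orbit sums. Amitsur's formula expresses them integrally in terms of $e_k(\mu)$, which equals $s_{k,\mu}$ for primitive words $\mu$ (Lyndon-type words, here commutative so primitive monomials). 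Since the formula holds over $\mathbb{Z}$, it gives integral generation by all $s_{k,\mu}$ with $k\le n$ and $\mu$ primitive, hence generation over any $K$ by base change.

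\textbf{Step 3: cut down to the stated range.} Two bounds remain: $k\le m$ rather than $k\le n$, and $k\deg\mu\le m(n-1)$. For the degree bound I would show that if $k\deg\mu>m(n-1)$, then $s_{k,\mu}$ is decomposable in $A_+^2$. One route is to note that some variable row appears in degree greater than $n-1$, and then use the relations coming from the vanishing of $e_{n+1}$ in one row, that is, Cayley--Hamilton/Newton-type relations applied row by row. Combined with Lemma~\ref{lem:boundp}, which already bounds generator degrees by $mn(n!-1)$, this gives a finite descent. The restriction to $k\le m$ would come from the same mechanism: a product of $k>m$ columns forces a repeated row pattern that reduces via the relations above. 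I expect this sharp-range reduction in Step 3 to be the most technical part.
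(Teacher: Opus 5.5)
The paper gives no proof of this statement; it quotes it from \cite{multisym}. Judged on its own terms, your Step~3 has two genuine gaps.

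\textbf{The range $k\le m$ is false as printed.} The range $k=1,\dots,m$ is a misprint for $k=1,\dots,n$: $s_k$ is the $k$-th elementary symmetric function of the $n$ column values $\mu(x_{1,j},\dots,x_{m,j})$. With $k\le m$ the claim fails whenever $2\le m<n$. Take $m=2$, $n=3$, write $y_1,y_2$ for the variables of $\mu$, and apply the $\mathfrak{S}_3$-equivariant specialization $x_{2,j}\mapsto 0$. Every $s_{k,\mu}$ with $y_2\mid\mu$ vanishes, and the only primitive monomial in $y_1$ alone is $y_1$. So the subalgebra generated by the proposed generators maps onto $K[e_1,e_2]$ of $x_{1,1},x_{1,2},x_{1,3}$. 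That ring misses $s_{3,y_1}=x_{1,1}x_{1,2}x_{1,3}$, even though $3\deg y_1\le m(n-1)=4$. Hence no mechanism of the kind ``$k>m$ columns force a repeated row pattern'' can exist. You should prove the version with $k\le n$, which Step~2 already gives, since $s_{k,\mu}=0$ for $k>n$.

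\textbf{The degree bound is asserted, not proved.} The bound $k\deg\mu\le m(n-1)$ is the whole content of the cited theorem; strictly it should read $\max(n,m(n-1))$, which matters only for $n=1$. It is Fleischmann's degree bound for vector invariants of $\mathfrak{S}_n$, refined to the generators $s_{k,\mu}$, and Step~3 only names a tool. Write $\mu=y_1^{a_1}\cdots y_m^{a_m}$. Pigeonhole gives a row $i$ with $ka_i\ge n$, and one-row Cayley--Hamilton, $x_{i,j}^n=\sum_{l\ge1}(-1)^{l+1}e_l(x_{i,1},\dots,x_{i,n})\,x_{i,j}^{n-l}$, does settle $k=1$, where $a_i\ge n$. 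For $k\ge2$, however, all $a_i$ can be $<n$, and then you need relations that mix several columns. For instance, take $m=2$, $n=3$, $\mu=y_1^2y_2$, $k=2$, so that $k\deg\mu=6>4$. With $z_j=x_{1,j}$ and $v_j=x_{2,j}$ one has
\[
s_{2,y_1^2y_2}=s_{2,y_1}\,s_{2,y_1y_2}-s_{3,y_1}\sum_{j\ne l}z_jv_jv_l .
\]
What is missing is a general lemma producing such decompositions with leading coefficient $1$. It has to be formulated for orbit sums, not full $\mathfrak{S}_n$-symmetrizations, because the latter carry stabilizer factors that can vanish mod $p$, which is exactly the issue you raised in Step~2. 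Lemma~\ref{lem:boundp} gives only finiteness and cannot replace this lemma.

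\textbf{Aside on Step~2.} Step~2 is easier than you feared. Suppose $u$ has distinct nontrivial column monomials $\nu_1,\dots,\nu_t$ occurring $k_1,\dots,k_t$ times. Then $\prod_\ell s_{k_\ell,\nu_\ell}$ equals $o(u)$ with coefficient exactly $1$, plus orbit sums with fewer nontrivial columns. Induction on the number of nontrivial columns therefore works over $\mathbb{Z}$ without Amitsur's formula. In either version you still need $e_k(z_1^e,\dots,z_n^e)\in\mathbb{Z}[e_1,\dots,e_n]$ to pass from arbitrary monomials, such as those attached to Lyndon words, to primitive ones.
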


\begin{corollary}\label{multisymOrbits} Let $G = \{ g_1 \ldots, g_n\}$ be a finite group acting linearly on $R=K[x_1,...x_n]$, and define $x_{ij}=g_j(x_i)$.  Then $R^G$ is generated by multisymmetric polynomials on the $x_{ij}$; in particular, by the generators in Theorem~\ref{thm:multisym}. 
\end{corollary}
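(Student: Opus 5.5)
The plan is to identify $R^G$ with the image of a ring homomorphism out of the multisymmetric ring. Introduce the polynomial ring $S = K[y_{i,j} : 1\le i\le m,\ 1\le j\le |G|]$, where $m$ is the number of variables of $R$ and $|G|$ plays the role of the number of copies in Theorem~\ref{thm:multisym}; note that the Corollary indexes both by $n$, which I read as this pair of parameters. Let $\mathfrak{S}_{|G|}$ permute the second index, and define the $K$-algebra map $\pi\colon S\to R$ by $\pi(y_{i,j}) = x_{ij} = g_j(x_i)$. The statement amounts to two claims: $\pi(S^{\mathfrak{S}_{|G|}}) \subseteq R^G$, and $R^G \subseteq \pi(S^{\mathfrak{S}_{|G|}})$. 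Once both hold, Theorem~\ref{thm:multisym} applied to $S$ shows that $R^G$ is generated by the images $\pi(s_{k,\mu})$, which gives the ``in particular''.

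\textbf{Containment.} For $g\in G$, left multiplication $g_j\mapsto gg_j$ is a permutation $\sigma_g$ of the index set, and $g(x_{ij}) = x_{i,\sigma_g(j)}$. Hence $g\circ\pi = \pi\circ\sigma_g$. Any $\mathfrak{S}_{|G|}$-invariant $P\in S$ therefore satisfies $g(\pi(P)) = \pi(\sigma_g P) = \pi(P)$. This step is routine.

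\textbf{Surjectivity.} This is the real content, and I expect it to be the main obstacle. In the non-modular case it is immediate: take $f\in R^G$, choose $g_1=\mathrm{id}$, and set $F(y)=f(y_{1,1},\dots,y_{m,1})$, so that $\pi(F)=f$. Averaging $F$ over $\mathfrak{S}_{|G|}$ gives a symmetric polynomial mapping to $\frac{1}{|G|}\sum_j g_j f = f$. This needs $|G|$ to be invertible in $K$, and the paper's setting includes the modular case.

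In the modular case I would argue geometrically instead. Write $V=\operatorname{Spec} R$, $N=|G|$, and consider the morphism $\iota\colon V\to V^{N}$, $v\mapsto (g_j^{-1}v)_j$, which is the comorphism of $\pi$. It is $G$-equivariant for the permutation action and descends to $\bar\iota\colon V/G\to V^{N}/\mathfrak{S}_N$; the claim is that $\bar\iota$ is a closed immersion. The steps I would carry out are:
\begin{enumerate}
\item Show that $\bar\iota$ is injective on points: two points with the same unordered tuple of orbit coordinates lie in the same $G$-orbit, since the first coordinate of one tuple appears among the coordinates of the other.
\item Show that $\bar\iota$ is finite, because $R$ is integral over $\pi(S^{\mathfrak{S}_N})$. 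Indeed each $x_i$ is a root of $\prod_j (T - x_{ij})$, whose coefficients are images of elementary multisymmetric polynomials.
\item Show that $R^G$ and $\pi(S^{\mathfrak{S}_N})$ have the same fraction field. Here I would adapt Galois theory, using polarized orbit products $\prod_j\bigl(T-\sum_i c_i x_{ij}\bigr)$ with generic $c_i$ to produce a primitive element.
\item Close the gap between $R^G$ and $\pi(S^{\mathfrak{S}_N})$.
\end{enumerate}

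Steps 2 and 3 show that $R^G$ is integral over $A=\pi(S^{\mathfrak{S}_N})$ inside the same fraction field. Step 4 would then follow if $A$ were normal, but normality of $A$ is exactly what can fail when $p\mid N$. I would therefore first try to produce each invariant directly as a coefficient of a suitable polarized orbit product. If that fails, I would look for an explicit counterexample. The most likely candidate is a $p$-group acting by a unipotent representation, such as $\mathbb{Z}/p$ on $K^2$, where I would test whether the norm and the linear invariant lie in $A$.
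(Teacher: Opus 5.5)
\noindent The paper states this Corollary without proof and with no hypothesis on $\operatorname{char}K$. As stated it holds only in the non-modular case, so your Step~4 is not a gap that a better idea could fill. Your containment argument is correct, and so is your non-modular surjectivity argument, which is Noether's orbit-sum argument. One small correction there: you only need $|G|$ invertible, because $\frac{1}{|G|}\sum_j F(y_{1,j},\dots,y_{m,j})$ is already symmetric, whereas a literal average over $\mathfrak{S}_{|G|}$ would require $|G|!$ invertible.

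In the modular case, the test you propose is already a counterexample. Let $p$ be odd with $\operatorname{char}K=p$, and let $G=\langle\sigma\rangle\cong\mathbb{Z}/p$ act on $R=K[x,y]$ by $\sigma(x)=x$, $\sigma(y)=y+x$, with $g_j=\sigma^j$ for $0\le j\le p-1$. Then $x_{1j}=x$ and $x_{2j}=y+jx$. Because $\pi$ preserves degree, $A=\pi(S^{\mathfrak{S}_p})$ is a graded subalgebra of $R$. Its degree-one part is spanned by the images of the row sums $\sum_j y_{1,j}$ and $\sum_j y_{2,j}$, namely $px=0$ and $py+\binom{p}{2}x=0$. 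So $A$ has no nonzero elements of degree one, and the invariant $x$ is not in $A$. It is, however, integral over $A$ with the same fraction field. From $\prod_j(T-y-jx)=T^p-x^{p-1}T-y^p+x^{p-1}y$ you get $x^{p-1}\in A$, and also $x^p\in A$, so $x=x^p/x^{p-1}$.

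\noindent What your Steps~1--3 actually prove is the correct replacement statement. Step~1 shows that $A$ separates $G$-orbits, so it is a separating subalgebra. Step~2 shows that $R$, and hence $R^G$, is integral over $A$.

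For Step~3 a primitive element alone does not suffice; you also need to express each $x_i$ over $\operatorname{Frac}A$. Assume the action is faithful and choose $z=\sum_i c_ix_i$ with trivial stabilizer. Put $\Phi(T)=\prod_j(T-g_jz)$ and $\Psi_i(T)=\sum_j x_{ij}\prod_{k\ne j}(T-g_kz)$. Both have coefficients in $A$, since they are images of coefficients of the column-symmetric polynomial $\prod_j\bigl(T-\sum_l c_ly_{l,j}+s\,y_{i,j}\bigr)$. Taking $g_1=\mathrm{id}$, you get $x_i=\Psi_i(z)/\Phi'(z)$. Hence $\operatorname{Frac}R=\operatorname{Frac}(A)(z)$ has degree at most $|G|$ over $\operatorname{Frac}A$, which forces $\operatorname{Frac}A=\operatorname{Frac}R^G$. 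Since $R^G$ is normal, it is then exactly the normalization of $A$, consistent with Theorem~\ref{thm:reconstruct}.

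Without faithfulness even the equality of fraction fields fails. For example, the trivial action of $\mathbb{Z}/p$ on $K[x_1,x_2]$ gives $A=K[x_1^p,x_2^p]$, and there the purely inseparable closure is genuinely needed.

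The right conclusion is therefore as follows. In the non-modular case, the generators of Theorem~\ref{thm:multisym} evaluated on the orbit generate $R^G$. In the modular case they give only a separating (reconstructing) system, whose normalization, or its purely inseparable closure in general, is $R^G$.
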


\begin{remark}\label{GnotGroup} The actual action of $G$ in Corollary~\ref{multisymOrbits} is not relevant as we can directly compute invariants from the $G$-orbits. In fact, this technique can even be applied to ``orbits'' without specifying any group action. Given $m$ tuples $[x_{1i},...,x_{ni}]$ ($1 \le i \le m$), with $x_{ij} \in R$ for a given ring $R$, the multisymmetric polynomials on the $x_{ij}$ are polynomials $f\in R[x_1,...,x_n]$ such that $f(x_{1i},...,x_{ni})=f(x_{1j},...,x_{nj})$ for all $i,j\in \{1,...,m\}$. These multisymmetric polynomials are generated by the polynomials described in Theorem~\ref{thm:multisym}.
\end{remark}

In our setting, we consider the case where the variety $X$ is the set of coefficients parameterizing Artin-Schreier curves $C:f :
y^p-y=f(x)$, $f(x) \in \Fpbar[x]$, with a fixed number of poles and multiplicities. 
Here, the acting group is $G=\operatorname{PGL}_2(\Fpbar)\times\mathbb{F}_p^\times$ as explained in Remark~\ref{rem:auto}. This is an infinite group, so the computation of invariants is especially difficult. A standard technique is to determine the associated Hilbert series (see, for example, \cite[Def.\ 2.5]{win6}), then compute sufficiently many invariants, and  check that they generate the entire space by comparing dimensions. While this is \emph{a priori} doable, it is unclear how to systematically produce invariants and how to convert this into a general algorithm that always produces generators. Our strategy therefore is to reduce the problem to having to consider only finite groups. We follow the treatment of \cite[Section 3.2]{LRS20}:

\begin{definition} Let $G$ be a linear algebraic group acting on an irreducible quasiprojective variety $X$. Let $f : H\rightarrow G$ be a group morphism, through which we
consider $H$ to act on $X$. Let $Z$ be a closed subvariety of $X$. Then $Z$ is called a $(G, H)$-section of the action of $G$ on $X$ if the following hold.
\begin{itemize}
\item[(i)] The stabilizer of $Z$ in $X$ is the image of $f$;
\item[(ii)] There exists an open subset $Z_1$ of $Z$ such that any two points of $Z_1$ which
are $G$-equivalent in $X$ are in fact $H$-equivalent in $Z$;
\item[(iii)] $X$ is the closure of $G\cdot Z$.
\end{itemize}
\end{definition}

\begin{proposition}[\mbox{\cite[Section 3]{GV78}}] Suppose that $Z$ is a $(G, H)$-section of $X$. Then the
canonical restriction arrow $K(Y)^G \rightarrow K(Z)^H$ between fields of rational functions is
an isomorphism. If we additionally assume that
\begin{itemize}
\item[(i)] $X$ is an affine normal variety;
\item[(ii)] $G$ is a linear algebraic group that does not admit any non-trivial character;
\item[(iii)] any closed orbit of $G$ in $X$ intersects $Z$,
\end{itemize}
then the canonical restriction arrow $K[X]^G \rightarrow K[Z]^H$ between rings of regular functions is an isomorphism.
\end{proposition}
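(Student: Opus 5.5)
Here $Y$ in the statement is read as $X$. The approach is the classical slice argument of Gross--Vust type. For the field statement, I compare the restriction map $\rho\colon K(X)^G\to K(Z)^H$ with an inverse built from the action map $\pi\colon G\times Z\to X$, $(g,z)\mapsto g\cdot z$. Condition (iii) says $\pi$ is dominant, and dominance is the key input throughout.

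\emph{Restriction is defined and injective.} Since $G\cdot Z$ is dense, a $G$-invariant rational function $\varphi$ on $X$ is determined by its values on $Z$: if $\varphi|_Z=0$ then $\varphi$ vanishes on $G\cdot Z$, hence $\varphi=0$. For $\varphi|_Z$ to make sense, the domain of $\varphi$ must meet $Z$; it is $G$-stable and open, so it meets the dense set $G\cdot Z$, hence meets $Z$. The image is $H$-invariant because $H$ acts through $f(H)\subseteq G$.

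\emph{Surjectivity for fields.} Given $\psi\in K(Z)^H$, let $\Psi=\psi\circ\mathrm{pr}_2$ on $G\times Z$. The fibres of $\pi$ over points of $G\cdot Z_1$ are described by (i) and (ii). If $g z=g' z'$ with $z,z'\in Z_1$, then $z'=h z$ for some $h\in H$, so $\Psi$ takes the same value at $(g,z)$ and $(g',z')$. Thus $\Psi$ is constant on the generic fibres of the dominant map $\pi$. To conclude that $\Psi$ descends to a rational function on $X$, I would use that $\pi$ factors through the quotient $G\times^{H} Z$ and that the induced map $G\times^H Z\to X$ is birational. Injectivity on a dense open set follows from (ii). Separability is the real issue and must come from (i): the stabilizer condition pins down the scheme-theoretic fibre. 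The descended function is $G$-invariant because $\Psi$ is invariant under left translation on the $G$ factor.

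\emph{Passage to regular functions.} Take $\varphi\in K(X)^G$ with $\varphi|_Z\in K[Z]^H$ regular. I need $\varphi\in K[X]$. Since $X$ is normal, it suffices to show $\varphi$ has no poles along any prime divisor. The polar divisor of $\varphi$ is $G$-stable, and because $G$ has no nontrivial characters, each $G$-stable divisor on the affine normal variety $X$ is principal with an invariant equation. Indeed, the equation $u$ satisfies $g\cdot u=\chi(g)u$ for a character $\chi$, which must be trivial by (ii). So $\varphi=a/u$ with $a,u\in K[X]^G$ and the closed set $V(u)$ $G$-stable and nonempty. $V(u)$ is closed and $G$-stable, so it contains a closed orbit, which meets $Z$ by (iii). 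Then $u|_Z$ vanishes somewhere on $Z$, whereas regularity of $a|_Z/u|_Z$ along with coprimality should force $u|_Z$ to be a unit. That gives a contradiction unless $u$ is constant.

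\emph{Main obstacle.} The delicate step is this last coprimality argument: reducing $a/u$ so that restriction to $Z$ does not create a cancellation, perhaps by choosing $u$ minimal and using the injectivity of restriction on invariants. The separability and birationality of $G\times^H Z\to X$ is a secondary technical point. I would first try working locally at the generic point of each invariant divisor, and otherwise cite the argument of \cite[Section 3]{GV78} directly for these two points.
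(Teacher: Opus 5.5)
The paper gives no proof of this proposition; it is quoted from the literature, where it is a characteristic-zero result. Your sketch therefore has to stand on its own. It has two genuine gaps, and each sits exactly where the statement as written fails.

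\textbf{The field part.} Separability cannot come from condition (i) of a $(G,H)$-section. That condition concerns the group of $K$-points stabilizing $Z$, and it says nothing about the scheme-theoretic fibres of $G\times Z\to X$. In characteristic $0$ separability is automatic, since a dominant, generically injective map is birational. In characteristic $p$ the conclusion fails:
\begin{itemize}
\item Let $G=\mathbb{G}_a$ act on $X=\mathbb{A}^2$ by $t\cdot(x,y)=(x+t,y)$, and take $Z=\{y=x^p\}$, $H=1$.
\item The stabilizer of $Z$ is trivial as a group of points, although it is $\alpha_p$ as a group scheme.
\item Each orbit $\{y=c\}$ meets $Z$ in exactly one point, and $G\cdot Z=X$.
\item Moreover $X$ is normal affine, $\mathbb{G}_a$ has no nontrivial characters, and every orbit is closed and meets $Z$.
\end{itemize}
Yet restriction is $K(y)\to K(x)$, $y\mapsto x^p$, which is not surjective, and neither is $K[y]\to K[x]$. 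The map $(t,x)\mapsto(x+t,x^p)$ is bijective but purely inseparable of degree $p$. Since the paper applies the proposition over $\Fpbar$, this is not a technicality. In characteristic $p$ your method can at best show that $K(Z)^H$ is purely inseparable over the image of $K(X)^G$.

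\textbf{The ring part.} It is false that a $G$-stable prime divisor on an affine normal variety is principal, let alone cut out by an invariant. Normality alone does not give the conclusion, even in characteristic $0$:
\begin{itemize}
\item Let $X\subset\mathbb{A}^4$ be the cone of $2\times2$ matrices of rank at most $1$; it is normal with $\operatorname{Cl}(X)\cong\mathbb{Z}$.
\item Let $G=\mathrm{SL}_2$ act by left multiplication, take $Z=\{\bigl(\begin{smallmatrix}s^2&s\\0&0\end{smallmatrix}\bigr): s\in K\}$, and let $H=U$ be the upper unitriangular subgroup.
\item Then $U$ is the stabilizer of $Z$ and acts trivially on it, distinct nonzero points of $Z$ lie in distinct orbits, and $G\cdot Z$ is dense.
\item The only closed orbit is $\{0\}\subset Z$, so all hypotheses hold.
\end{itemize}
But every orbit closure contains $0$, so $K[X]^G=K$, whereas $K[Z]^H=K[s]$. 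The invariant $m_{11}/m_{12}$ restricts to $s$. Its polar divisor, the plane of matrices with zero second column, is $G$-stable and not principal.

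What your argument needs is that $K[X]$ be factorial with only constant units, for example $X$ a vector space. Then $\varphi=a/u$ with $a,u$ coprime, and invariance of $\varphi$ together with the absence of characters forces $a,u\in K[X]^G$.

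Your last step must also change. A closed orbit chosen arbitrarily in $V(u)$ may lie in $V(a)$, and then it yields nothing. Instead:
\begin{itemize}
\item Take $x\in V(u)\setminus V(a)$, which is nonempty by coprimality.
\item Take a closed orbit in $\overline{G\cdot x}$. Since $a$ and $u$ are invariant, they are constant on $\overline{G\cdot x}$, so $u=0\neq a$ on this orbit.
\item This orbit meets $Z$ by hypothesis (iii). Evaluating $\varphi|_Z\cdot u|_Z=a|_Z$ at such a point gives the contradiction.
\end{itemize}
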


In many relevant cases, we will be able to work with finite subgroups $H=G_0\times\mathbb{F}_p^{*}$ of $G=\operatorname{PGL}_2(\Fpbar)\times\mathbb{F}_p^\times$, and closed subvarieties $Z$ corresponding to the parameters of Artin-Schreier curves in a particular standard form. 
First, we compute the action of $G_0$ and subsequently the action of $\mathbb{F}_p^\times$. 

\begin{definition} (\cite[\S 2.3.1]{DK02}) Let $G$ be a reductive group and $X$ a variety on which $G$ acts regularly. The \defi{categorical quotient} $X//G$ is the affine variety whose coordinate ring is $K[X]^G$. 
The surjective morphism $\pi:\,X\rightarrow X//G$ corresponding to the inclusion of coordinate rings, which
The morphism $\pi$ 
is a geometric quotient if there is a one-to-one correspondence between $G$-orbits and points of $X//G$. 
\end{definition}

\begin{theorem} (\cite[§0.2]{MumfordFogartyKirwanGIT}) Let $
G$ be a finite group acting on a variety 
$X$ over an algebraically closed field 
$k$. The morphism $\pi:\,X\rightarrow X//G=\operatorname{Spec}(k[X]^G)$
is a geometric quotient if and only if the action of $G$ on $X$ is free, i.e. all stabilizers $G_x$
 are trivial.
\end{theorem}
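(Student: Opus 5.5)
The first step is to decide how to read ``one-to-one correspondence between $G$-orbits and points of $X//G$''. For a finite group the set-theoretic version holds for \emph{every} action, free or not. So the equivalence can only be true under a scheme-theoretic reading. I would therefore prove two things. First, $\pi$ always induces a bijection between $G$-orbits and closed points of $X//G$. Second, the fibres of $\pi$ are, as schemes, single orbits isomorphic to $G$ (the sense in which $\pi$ is a geometric quotient whose fibres are $G$-torsors) if and only if all stabilizers $G_x$ are trivial. Throughout I reduce to $X$ affine, $X=\operatorname{Spec} A$, by covering $X$ with $G$-stable affine opens; this is possible when every orbit lies in an affine open, as in the quasi-projective setting of the paper.

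For the set-theoretic bijection I use norm polynomials. For $a\in A$, the polynomial $P_a(t)=\prod_{g\in G}(t-g\cdot a)$ has coefficients in $A^G$, so $A$ is integral over $A^G$. By Noether's theorem $A^G$ is finitely generated, so $\pi$ is finite and surjective. If $x,y$ lie in different orbits, the orbits are finite and hence closed, so I choose $a\in A$ with $a(g\cdot y)=0$ for all $g$ and $a(x)=1$. Then $\prod_g g\cdot a$ is an invariant that equals $1$ at $x$ and $0$ at $y$. Hence $\pi$ separates orbits, and its fibres are exactly the $G$-orbits.

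For ``free $\Rightarrow$ torsor'': if all stabilizers are trivial, I show $\pi$ is étale and $X\times G\to X\times_{X//G}X$, $(x,g)\mapsto(x,gx)$, is an isomorphism. The route is to complete at a closed point $x$. Freeness makes the $|G|$ points $g\cdot x$ distinct, so $\widehat{A}\otimes_{A^G}\widehat{A^G}_{\pi(x)}\cong\prod_{g}\widehat{\mathcal{O}}_{X,gx}$. Taking $G$-invariants, which permute the factors simply transitively, identifies $\widehat{A^G}_{\pi(x)}$ with $\widehat{\mathcal{O}}_{X,x}$. Thus $\pi$ is étale at $x$, and the fibre $\pi^{-1}(\pi(x))$ is the reduced scheme $G\cdot x\cong G$. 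This is the standard argument of Mumford's \emph{Abelian Varieties} \S7, which I would cite or reproduce.

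For the converse, suppose $G_x\neq 1$. Then $g\mapsto g\cdot x$ is not injective, so the fibre over $\pi(x)$ has only $|G|/|G_x|<|G|$ points. Over the free locus the fibres have exactly $|G|$ points (taking the action faithful, as in our application). If $\pi$ were a torsor it would be finite flat of constant degree $|G|$ with reduced fibres, hence constant cardinality, which is a contradiction. In the degenerate case where $G$ acts with a global kernel, the kernel already lies in every stabilizer, and the orbit map $G\to G\cdot x$ fails to be an isomorphism. The main obstacle is this converse: pinning down which scheme-theoretic notion of ``geometric quotient'' makes it true, and checking flatness and degree constancy. I would handle the latter through the local completion computation above, which shows $\widehat{A^G}_{\pi(x)}=\widehat{\mathcal{O}}_{X,x}^{G_x}$. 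That ring is never isomorphic to $\widehat{\mathcal{O}}_{X,x}$ when $G_x$ acts nontrivially, so $\pi$ is ramified at $x$.
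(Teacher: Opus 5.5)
Your reading of the statement is right, and it is the key point of your proposal. The paper gives no argument beyond the citation \cite[\S0.2]{MumfordFogartyKirwanGIT}, and as printed the ``only if'' direction is false. This holds both for the paper's own definition of geometric quotient (orbits in bijection with points of $X//G$) and for Mumford's. For instance, take $p$ odd and $G=\{\pm1\}$ acting on $\mathbb{A}^1$ by $t\mapsto -t$. The map $\pi(t)=t^2$ identifies orbits with points of $\mathbb{A}^1$, yet $0$ is a fixed point. For finite $G$ on affine $X$, $\pi$ is always a geometric quotient, and freeness is exactly what makes it a principal $G$-bundle (a finite \'etale $G$-torsor). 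That corrected equivalence is what you prove. Your route is the standard one: norm polynomials for integrality and orbit separation, and completion at closed points for \'etaleness, as in Mumford's \emph{Abelian Varieties}, \S7. So there is no competing proof in the paper to compare with.

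Some local repairs are needed.
\begin{itemize}
\item In the orbit-separation step, $(\prod_g g\cdot a)(x)=\prod_g a(g^{-1}x)$, so $a(x)=1$ alone does not make this product nonzero at $x$. Either take $a\equiv 1$ on all of $Gx$ and $a\equiv 0$ on $Gy$ (possible because $Gx\cup Gy$ is a finite set of closed points). Or keep your $a$ and use all coefficients of $P_a(t)$: it specializes to $t^{|G|}$ at $y$ but has the root $1$ at $x$, so some coefficient separates the two points.
\item In the \'etale step, the ring should be $A\otimes_{A^G}\widehat{A^G}_{\pi(x)}$, not $\widehat{A}\otimes_{A^G}\widehat{A^G}_{\pi(x)}$.
\item You should also say why taking invariants commutes with this base change. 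Since $p$ may divide $|G|$ (as in the paper's $p=3$ cases), there is no Reynolds operator. However, $A^G=\ker\bigl(A\to\prod_g A,\ a\mapsto(ga-a)_g\bigr)$ and completion is flat, so invariants pass through.
\item For the torsor claim, note that $X\times G\to X\times_{X//G}X$ is then a map of finite \'etale $X$-schemes of the same degree $|G|$ that is injective on $k$-points. Hence it is an isomorphism.
\end{itemize}

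Your converse is far more roundabout than necessary, and its last sentence is false as written. The ring $\widehat{\mathcal{O}}_{X,x}^{G_x}$ can be abstractly isomorphic to $\widehat{\mathcal{O}}_{X,x}$, e.g.\ $k[[t]]^{\{\pm1\}}=k[[t^2]]\cong k[[t]]$. What fails is surjectivity of the natural inclusion, and that is what ramification at $x$ actually requires.

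None of this is needed: you can drop the ramification argument, the faithfulness assumption, the nonempty free locus, and the separate global-kernel case. If $(x,g)\mapsto(x,gx)$ is an isomorphism $X\times G\to X\times_{X//G}X$, it is injective on $k$-points, so $gx=x$ forces $g=e$. Equivalently, the fibre over $\pi(x)$ has only $|G|/|G_x|<|G|$ points, so it cannot be isomorphic to $G$.
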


For groups $G$ and $H$, let $X$ be a $G$-variety and $Y$ an $H$-variety such that $X//G$ and $Y//H$ are geometric quotients. If $X//G\simeq Y//H$, then $K[X]^G\simeq K[Y]^H$.  

\begin{proposition}\label{prop:subgroup} Let $H<G$, then $K[X]^G=(K[X]^H)^{G}$. If $H\lhd G$, then $K[X]^G=(K[X]^H)^{G/H}$. 
\end{proposition}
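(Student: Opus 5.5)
The plan is to prove the two equalities straight from the definitions, viewing every invariant ring as a subring of $K[X]$. For the first statement I read $(K[X]^H)^G$ as the set of elements of $K[X]^H$ that are fixed by all of $G$. (The action of $G$ does not in general preserve $K[X]^H$ when $H$ is not normal, so this is the only meaningful reading.) With this reading the equality is two inclusions. Take $f\in K[X]^G$. Then $g\cdot f=f$ for every $g\in G$, and in particular for every $h\in H$, so $f\in K[X]^H$. Since $f$ is also $G$-fixed, $f\in (K[X]^H)^G$. Conversely, any element of $(K[X]^H)^G$ is by definition $G$-fixed, so it lies in $K[X]^G$. This settles the first claim; the only content is that $H$-invariance is implied by $G$-invariance.

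For the normal case I first check that $G$ preserves $K[X]^H$ and that the induced action factors through $G/H$. Take $f\in K[X]^H$, $g\in G$ and $h\in H$. Normality gives $g^{-1}hg\in H$, so
\[ h\cdot(g\cdot f) = g\cdot\bigl((g^{-1}hg)\cdot f\bigr) = g\cdot f. \]
Here I use that $(g\cdot f)(x)=f(g^{-1}x)$ defines a left action, so that $(ab)\cdot f = a\cdot(b\cdot f)$. Hence $g\cdot f\in K[X]^H$.

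Next, if $g'=gh$ with $h\in H$, then $g'\cdot f = g\cdot(h\cdot f)=g\cdot f$. So the rule $(gH)\cdot f := g\cdot f$ is well defined, and it is a group action of $G/H$ on $K[X]^H$ by $K$-algebra automorphisms.

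Finally, an element $f\in K[X]^H$ is fixed by $G/H$ exactly when it is fixed by every $g\in G$. By the first part, this set is $K[X]^G$.

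I do not expect any real obstacle. The one point that needs care is the well-definedness step in the normal case, namely checking that $g\cdot f\in K[X]^H$ via $g^{-1}hg\in H$ and that cosets act consistently. I will state that check explicitly and not leave it implicit. Nothing from the earlier results (finite generation, or the degree bounds of Lemma~\ref{lem:bound} and Lemma~\ref{lem:boundp}) is needed. The proposition is purely formal, and it will be used later to compute invariants in stages, first under $G_0$ and then under $\F_p^\times$.
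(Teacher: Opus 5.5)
Your proof is correct. In the general case, reading $(K[X]^H)^G$ as $K[X]^H\cap K[X]^G$ is the only sensible interpretation, and the claim then reduces to $K[X]^G\subseteq K[X]^H$. In the normal case, your identity $h\cdot(g\cdot f)=g\cdot\bigl((g^{-1}hg)\cdot f\bigr)=g\cdot f$, together with the well-definedness of the action on cosets, is exactly what is needed.

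The paper states this proposition without proof, treating it as standard, and your argument from the definitions is the standard one.
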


The action corresponding to $1\times\mathbb{F}_p^*$ (called the $\lambda$-action) is easy to compute because it arises from a finite abelian (cyclic) group of cardinality coprime to $p$. So we can for instance use \cite[Algorithm 2.7.3]{sturmfelsIT} to compute this action. We will sometimes avoid its explicit computation since it is straightforward and only complicates things overall.
 
In some  cases, and in order to produce general results, we will compute invariants \emph{ad hoc} instead of using \MAGMA. In these instances, we will need to prove that the invariants we computed generate the full invariant ring. 

\begin{definition} A subset $S\subseteq K[X]^G$ is said to be \defi{separating} if it satisfies the following property. For any two points $x,y\in X$, if there exists an invariant $f\in K[X]^G$ with $f(x)\neq f(y)$, then there exists an element $g\in S$ with $g(x)\neq g(y)$.
\end{definition}

\begin{definition}
Let $A\subseteq K[X]$ be a subalgebra of a polynomial ring of positive characteristic $p$.
Then the algebra
$
\hat{A}=\{ f\in K[X]:\,f^{p^r}\in A \text{ for some } r\in \mathbb{N}\}\subseteq K[X]
$
is the \defi{purely inseparable closure} of A in $K[X]$.
\end{definition}

\begin{theorem}[{\cite[Theorem.~2.3.15]{DK02}}] Let $X$ be an affine variety and $G \subseteq \operatorname{Aut}(K[X])$ a subgroup of the automorphisms of the coordinate ring $K[X]$. Then there exists a finite
separating set $S\subseteq K[X]^G$.
\end{theorem}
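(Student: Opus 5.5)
The plan is to prove the existence of a finite separating set by reduction to the Noetherian property of $K[X]$ together with a Zariski-closedness argument.

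\textbf{Step 1: invariants separate orbit closures.} For each invariant $f\in K[X]^G$, consider the closed subset
\[
Z_f=\{(x,y)\in X\times X:\ f(x)=f(y)\},
\]
which is the zero set of $f\otimes 1-1\otimes f\in K[X]\otimes_K K[X]=K[X\times X]$. Define
\[
Z=\bigcap_{f\in K[X]^G} Z_f .
\]
By definition, $S$ is separating exactly when $\bigcap_{g\in S}Z_g=Z$.

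\textbf{Step 2: choose a finite subset.} The ring $K[X\times X]$ is Noetherian, since it is finitely generated over $K$. Hence the ideal
\[
I=\bigl(f\otimes 1-1\otimes f:\ f\in K[X]^G\bigr)
\]
is generated by finitely many of its elements. We may choose these generators from the given spanning family: for any generating family of an ideal in a Noetherian ring, some finite subfamily already generates it. So there are $f_1,\dots,f_k\in K[X]^G$ with
\[
I=\bigl(f_i\otimes1-1\otimes f_i\bigr)_{i=1}^k .
\]
Equivalently, the descending chain of closed sets $\bigcap_{i\le j}Z_{f_i}$ stabilizes, by the Noetherian property of the topological space $X\times X$.

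\textbf{Step 3: conclude.} Put $S=\{f_1,\dots,f_k\}$. If $x,y$ satisfy $f_i(x)=f_i(y)$ for all $i$, then $(x,y)$ lies in the zero set $V(I)=Z$. Therefore every $f\in K[X]^G$ satisfies $f(x)=f(y)$. This is the contrapositive of the separating property.

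\textbf{Main obstacle.} The only delicate point is the finite generation step: the invariant ring itself need not be finitely generated for an arbitrary group $G$. The argument avoids this entirely, because Noetherianity is used in $K[X\times X]$ rather than in $K[X]^G$. It remains to check that $f\otimes1-1\otimes f$ really cuts out $f(x)=f(y)$ on $X\times X$. This holds because $X\times X$ is affine with coordinate ring $K[X]\otimes_K K[X]$ when $K$ is algebraically closed.
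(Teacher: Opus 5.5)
Your argument is correct, and it is essentially the standard proof behind the cited result \cite[Theorem~2.3.15]{DK02}; the paper gives no proof of its own, only the citation. The key step is that $K[X]\otimes_K K[X]$ is Noetherian, so finitely many $f\otimes 1-1\otimes f$ with $f\in K[X]^G$ generate the ideal spanned by all of them, and their common zero set in $X\times X$ is exactly the locus where all invariants agree. As you observe, finite generation of $K[X]^G$ is never used, and in fact the group structure plays no role at all: the same argument produces a finite separating subset for any subset of $K[X]$.
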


\begin{theorem}[{\cite[Theorem. 2.3.12]{DK02}}]\label{thm:reconstruct}  Let $G$ be a finite group and let $V$ be an $\Fpbar$-rational representation of~$G$.
Let $A\subseteq \Fpbar[V]^G$ be a finitely generated, graded, separating subalgebra. Then $\Fpbar[V]^G=\hat{\tilde{A}}$, the purely inseparable closure of the normalization of $A$.
\end{theorem}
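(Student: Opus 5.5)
The plan is to prove the theorem for a normal graded domain $B=\Fpbar[V]$, an integral extension of $A$ via $\Fpbar[V]^G$. Write $R=\Fpbar[V]^G$. The target is the equality $R=\hat{\tilde{A}}$, where $\tilde{A}$ is the normalization of $A$ (its integral closure in its own fraction field) and the hat is the purely inseparable closure inside $\Fpbar[V]$. I will prove the two inclusions separately. The route is: integrality plus normality of $R$ for the easy inclusion, then a field-theoretic comparison of $\operatorname{Frac}(A)$ and $\operatorname{Frac}(R)$ obtained from the separating property.

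\textbf{Step 1: $\hat{\tilde{A}}\subseteq R$.}
\begin{enumerate}
\item $R$ is normal. It is the intersection of the normal domain $\Fpbar[V]$ with the field $\Fpbar(V)^G$.
\item $\Fpbar[V]$ is integral over $R$. Each variable $x$ is a root of $\prod_{g\in G}(T-gx)$, a polynomial with coefficients in $R$.
\item $A$ is separating, so the fibers of $\operatorname{Spec} R\to \operatorname{Spec} A$ are finite. The key input is a graded Noether normalization-type argument: if $A$ is separating and graded, then $\Fpbar[V]$ is integral over $A$. The reason is that the only point with all positive-degree elements of $A$ vanishing is $0$, since $0$ is separated from any nonzero point by some invariant. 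Hence the nullcone of $A$ equals that of $R$, which is $\{0\}$, and graded Nakayama / the graded Noether normalization criterion gives integrality.
\item So $R$ is integral over $A$. Therefore $\tilde{A}$, computed in $\operatorname{Frac}(A)\subseteq \operatorname{Frac}(R)$, lies in $R$ because $R$ is normal.
\item If $f\in\Fpbar[V]$ satisfies $f^{p^r}\in R$, then $(gf)^{p^r}=f^{p^r}$ for all $g\in G$. Since Frobenius is injective on a domain, $gf=f$, so $f\in R$.
\end{enumerate}

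\textbf{Step 2: $R\subseteq\hat{\tilde{A}}$.} Let $L=\operatorname{Frac}(A)\subseteq M=\operatorname{Frac}(R)$; the extension is finite by integrality. I claim $M/L$ is purely inseparable. Suppose not. Then the separable closure $L_s$ of $L$ in $M$ is strictly bigger than $L$, so there is an element $f\in R$, separable over $L$ and not in $L$. Take a normal closure and a nontrivial $L$-embedding $\sigma$ with $\sigma(f)\neq f$.

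Geometrically, this means the dominant finite map $V/\!\!/G=\operatorname{Spec}R\to\operatorname{Spec}A$ has generic separable degree $>1$. By the standard fact that the generic fiber cardinality equals the separable degree, a generic point $\bar a\in\operatorname{Spec}A$ has at least two preimages in $\operatorname{Spec}R$. Points of $\operatorname{Spec}R$ are $G$-orbits in $V$, since $G$ is finite. So there are two distinct orbits, i.e. points $x,y\in V$, that are separated by $R$ but not by $A$. This contradicts $A$ being separating.

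Hence $M/L$ is purely inseparable. So for $f\in R$ there is $r$ with $f^{p^r}\in L$, and $f^{p^r}$ is integral over $A$ because it lies in $R$. Thus $f^{p^r}\in\tilde{A}$ and $f\in\hat{\tilde{A}}$.

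\textbf{Main obstacle.} The delicate step is the separable-degree-equals-generic-fiber-size argument. It needs $\operatorname{Spec}A$ to be a variety (true since $A$ is finitely generated) and needs the fiber count to be taken over an open dense set of closed points. I would invoke the standard result that for a finite dominant morphism of irreducible varieties over an algebraically closed field, the fiber over a general point has exactly $[M:L]_s$ points. The closed points of $\operatorname{Spec}R$ are then identified with $G$-orbits using the finiteness of $G$.
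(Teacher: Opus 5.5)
Your argument is correct. The paper gives no proof of this statement; it simply imports it from Derksen--Kemper. Your route is the standard proof of that result:
\begin{itemize}
\item equality of nullcones gives integrality of $\Fpbar[V]$ over $A$;
\item normality of $R=\Fpbar[V]^G$ gives $\hat{\tilde A}\subseteq R$;
\item the separating property makes $\operatorname{MaxSpec}R\to\operatorname{MaxSpec}A$ injective, which forces $\operatorname{Frac}(R)/\operatorname{Frac}(A)$ to be purely inseparable.
\end{itemize}

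One optional streamlining: the automorphism $\sigma$ that you set up in Step~2 but never use can replace the generic-fibre/separable-degree theorem. Choose a maximal ideal $\mathfrak M$ of the integral closure of $A$ in the normal closure with $f-\sigma(f)\notin\mathfrak M$. Then $\mathfrak M$ and $\sigma^{-1}(\mathfrak M)$ contract to distinct maximal ideals of $R$, because $f$ is congruent modulo them to the distinct constants $f \bmod \mathfrak M$ and $\sigma(f)\bmod\mathfrak M$. They contract to the same maximal ideal of $A$, because $\sigma$ fixes $A$. Lifting these two ideals to points of $V$ then contradicts separation directly.
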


A set of invariants that allow reconstruction of a point as described in Corollary~\ref{thm:reconstruct} is referred to as a \defi{reconstructing system}.

\begin{remark} \label{genericallyreconstruction} Let $
G$ be a finite group acting on a variety 
$X$ over an algebraically closed field 
$K$. A recent result \cite[Proposition 1]{ReimersSezer2025} states that generically reconstructing systems generate $K(X)^G$.  
\end{remark}

\section{Main Algorithm and Results}\label{sec:MainAlg}

\subsection{General Method}\label{subsec:methods}
The group of isomorphisms as given in \eqref{eq:ASisom} is infinite, but by considering isomorphisms of standard models we are able to rigidify the problem of computing $\Fpbar$-invariants for Artin-Schreier curves and reduce to a finite subgroup $G$, a much more tractable situation.  To describe an Artin-Schreier curve~$C_f$, we use the coefficients of its standard form as described in Theorem~\ref{T:normal}, or variants thereof. Then the invariant ring is a subring of the polynomial ring $R$ on variables generated by the coefficients of the standard form of $C_f$.  

When $G$ acts linearly on the coefficients of a standard form $C_g: y^p-y = g(x)$, we can directly apply results about linear actions from Section~\ref{sec:Invariants}, including Lemma~\ref{lem:bound}, and use built-in methods  from \MAGMA~in our code to find invariants. This occurs, for example, when $g(x)$ has exactly three poles or at least three poles of multiplicity one (see Section~\ref{sec:3poles}).  In that case, we compute a matrix representation of the action of isomorphisms that preserve the standard form in terms of the coefficients of $g(x)$.  Then we compute invariants of the group generated by these matrices, using \MAGMA's invariant theory package for finite groups.

Alternatively, we can also compute invariant rings by generating them explicitly.  This idea is especially useful when the action of the isomorphisms that preserve the standard form is not linear.  We let $R$ be the polynomial ring over $\Fpbar$ on the variables given by the coefficients and the poles of $g(x)$. Then each isomorphism of the curve $C_g$ that returns a standard form gives the new coefficients as rational functions on the variables of $R$. In some cases, the resulting rational functions are polynomials.  In other cases, they are true rational functions.  For example, if $C_g$ has more than three poles but not at least three poles of multiplicity one, the group $G$ of isomorphisms of standard forms will allow some permutation of the distinguished poles  $P_{\infty}$,$P_0$ and $P_1$, and the result of such a M\"{o}bius transformations will be a model with coefficients that are rational functions involving the coefficients of the additional poles (for example $\theta$ if $g(x)$ has a pole at $x=\theta$).

Unfortunately, the isomorphisms preserving a standard model do not always form a group (e.g. see Example~\ref{E:r0d7p3} and Section~\ref{sec:5poles}). In these cases, we cannot properly compute invariants. However, we can still compute ``specializations'' of invariants using Remark~\ref{GnotGroup}. 

\begin{theorem}\label{thm:GnotGroup} Let $G$ be a group acting linearly on $R=K[y_1,...,y_r]$. Let $\phi:\,R_0=K_0[x_1,...,x_n]\hookrightarrow R$ be a ring homomorphism such that the orbit of each element of $R$ by $G$ intersects $\phi(R_0)$ generically in a finite number $m$ of elements. Define the finite set $G_f=\{\sigma\in G:\,^\sigma f\in R_0\}$ and $\operatorname{Orb}(f)=\{\,^\sigma f:\,^\sigma\in G_f\}$. Then for any $I\in R^G$, $I(\phi(f))$ is a multisymmetric polynomial on the $x_{ij}$ as in Remark~\ref{GnotGroup}. A representative of $\phi(f)$ by the action of $G$ can be reconstructed from the multisymmetric functions of $f$, in particular by those in Theorem~\ref{thm:multisym}; and a representative of $f$ can be reconstructed by the values of $I(\phi(f))$ for all $I$ in a generator system of $R^G$. In addition, invariants in $R^G$ can be constructed by pushing-forward these multisymmetric functions as in Remark 4.4 in \cite{win6}.
\end{theorem}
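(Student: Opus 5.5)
The plan is to read Theorem~\ref{thm:GnotGroup} as four separate assertions and prove each from the invariance of $I$ together with Remark~\ref{GnotGroup} and Theorem~\ref{thm:multisym}. Fix a generic $f$, i.e.\ a point of the parameter space whose $G$-orbit meets $\phi(R_0)$ in exactly $m$ points. Write these points as $\phi(x_{\cdot 1}),\dots,\phi(x_{\cdot m})$, where $x_{\cdot j}=(x_{1j},\dots,x_{nj})$; these are the elements of $\operatorname{Orb}(f)$ pulled back to $R_0$.

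\textbf{Step 1 (multisymmetry).} Let $I\in R^G$. Each $\phi(x_{\cdot j})$ equals ${}^{\sigma_j}f$ for some $\sigma_j\in G_f$, so $I(\phi(x_{\cdot j}))=I({}^{\sigma_j}f)=I(f)$ for every $j$. Hence $I\circ\phi$ takes the same value on all $m$ tuples of the orbit. This is exactly the defining property of a multisymmetric polynomial in Remark~\ref{GnotGroup}. Equivalently, $I\circ\phi$ restricted to the orbit is a symmetric function of the $m$ tuples $x_{\cdot j}$.

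\textbf{Step 2 (reconstruction from multisymmetric functions).} Form the polynomial
\[
P(T_1,\dots,T_n,Z)=\prod_{j=1}^m \Bigl(Z-\sum_{i} x_{ij}T_i\Bigr).
\]
Its coefficients are polynomials in the $s_{k,\mu}$ of Theorem~\ref{thm:multisym}; this is the classical Chow-form or elimination argument. Knowing the values of the $s_{k,\mu}$ therefore determines $P$. Factoring $P$ into linear forms recovers the unordered set $\{x_{\cdot 1},\dots,x_{\cdot m}\}$, and any element of this set gives a representative $\phi(x_{\cdot j})$ of the $G$-orbit of $f$.

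\textbf{Step 3 (reconstruction from a generating system of $R^G$).} Let $I_1,\dots,I_t$ generate $R^G$. The values $I_k(\phi(f))$ determine $I(f)$ for every $I\in R^G$. By the separating-set theorem and Theorem~\ref{thm:reconstruct}, invariants separate generic orbits, with closures taken into account. So the values $I_k(\phi(f))$ single out the orbit of $f$ generically, and hence also a representative of $f$.

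\textbf{Step 4 (push-forward).} Given a multisymmetric function $s$, one transports it back along the orbit correspondence $f\mapsto \operatorname{Orb}(f)$, following Remark~4.4 of \cite{win6}. The resulting function on the parameter space is constant on $G$-orbits because $\operatorname{Orb}({}^\tau f)=\operatorname{Orb}(f)$ for every $\tau\in G$. A priori this push-forward is only a rational invariant. To upgrade it to an element of $R^G$, multiply by a suitable power of the denominator, which is itself an invariant obtained from the normalization conditions. Alternatively, appeal to Remark~\ref{genericallyreconstruction} to obtain a generator of $K(X)^G$ and then clear denominators.

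The main obstacle is Step~4, together with the genericity hypotheses in Steps~2 and~3. The orbit correspondence $f\mapsto\operatorname{Orb}(f)$ is only defined on an open set, so the pushed-forward function need not be polynomial. I would first try to show that the denominators come only from the $\theta$-type pole coordinates, and handle them by clearing with an explicit invariant discriminant.
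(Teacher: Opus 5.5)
The paper states Theorem~\ref{thm:GnotGroup} without proof, so there is no argument of the authors to compare with. Judged on its own, your Steps~1 and~2 are fine, but Step~3 has a genuine gap and Step~4 is only a plan.

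Step~1 follows directly from $I\in R^G$ and the (weak) definition of ``multisymmetric'' in Remark~\ref{GnotGroup}. Step~2 is a good argument. The coefficients of $\prod_j\bigl(Z-\sum_i x_{ij}T_i\bigr)$ are multisymmetric, hence polynomials in the $s_{k,\mu}$ by Theorem~\ref{thm:multisym}, which holds over any field. Unique factorization into factors monic in $Z$ then returns the unordered set $\operatorname{Orb}(f)$.

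\textbf{Step~3 is where the argument breaks.} The results you invoke do not give what you need. Theorem~\ref{thm:reconstruct}, like Remark~\ref{genericallyreconstruction}, is a statement about \emph{finite} groups. The separating-set theorem only says that some finite subset of $R^G$ separates whatever $R^G$ already separates. Nothing you cite implies that polynomial invariants of an infinite group such as $\operatorname{PGL}_2(\Fpbar)\times\F_p^\times$ separate generic orbits. Under the hypotheses as stated this is in fact false. Let $G=K^\times$ act on $R=K[y_1,y_2]$ by scalars and take the slice $\{y_1=1\}$. The orbit of $(a,b)$ with $a\neq 0$ meets it only in $(1,b/a)$, so $m=1$; yet $R^G=K$, and the values of invariants recover nothing. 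Step~3 therefore needs an extra hypothesis that your proposal does not supply. One option is that $G$ is reductive and generic orbits are closed (stable points), so that invariants separate disjoint closed orbits by geometric invariant theory. The other is to replace $R^G$ by $K(X)^G$ and use Rosenlicht's theorem.

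\textbf{Step~4 has the same problem and, as you acknowledge, is not carried out.} Your claim that the denominator of the push-forward is an invariant ``obtained from the normalization conditions'' is unjustified. What is true is this: for a group with no nontrivial characters acting on the factorial ring $R$, the numerator and denominator of a rational invariant in lowest terms are themselves invariant. For a linear action of a group with characters (e.g.\ $\operatorname{GL}_2$ on coefficient vectors, through $\det$), they are only semi-invariants, and clearing denominators does not land in $R^G$. Moreover, in characteristic $p$ the symmetric functions of $\operatorname{Orb}(f)$ need not even be rational in $f$ if the correspondence $f\mapsto\operatorname{Orb}(f)$ is inseparable. A priori they lie only in a purely inseparable extension of $K(X)$; compare the purely inseparable closure in Theorem~\ref{thm:reconstruct}. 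Your fallback to Remark~\ref{genericallyreconstruction} is again a finite-group statement and does not apply here.
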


\subsection{Road map}
We provide a road map for computing invariants of Artin-Schreier curves. Depending on the parameters, we re-order the pole orders appropriately and choose three  \emph{distinguished}poles of respective orders $d_1$, $d_2$, $d_3$ that are mapped  $P_{\infty}$, $P_0$, and $P_1$ via an isomorphism that produces a suitable standard form. In each case, we refer to the section where the corresponding invariant ring is discussed. 

\bigskip

\noindent {\bf Input:} A prime $p$ and a list of pole orders $\{d_1,d_2,\dots,d_{r+1}\}$.  

\noindent {\bf Output:} A set of reconstructing invariants for the corresponding space of Artin-Schreier curves defined by the prime $p$ and the list of pole orders $\{d_1,d_2,\dots,d_{r+1}\}$ suitably rearranged.

\begin{enumerate}
    \item If there is exactly one pole, proceed via Section~\ref{sec:onepole}.
    \item If there are exactly two poles, sort so $d_1\geq d_2$, then  proceed via Section~\ref{sec:twopoles}.
    \item If there are exactly three poles,
    proceed via Section~\ref{sec:3poles}.  
    \item Else if there are at least three pole orders of multiplicity one (outside case (3)), let $d_1 \ge d_2\ge d_3$ be the largest orders of the poles of multiplicity one and $d_4 \geq \cdots \ge d_{r+1}$ are the remaining pole orders. Then proceed via Theorem~\ref{thm:3unique} in general or Corollary~\ref{cor:allunique} if all poles have multiplicity one. 
    \item Else if there are  two pole orders of multiplicity one, Let $d_1 \ge d_2$ be these pole orders and $d_3 \geq \cdots \ge d_{r+1}$ are the remaining pole orders. Then proceed via Proposition~\ref{prop:2pole} and Theorem~\ref{thm:11+}.
    \item Else if there is a pole order of multiplicity three, sort so $d_1 = d_2 = d_3$ is the largest pole order of multiplicity three and $d_4 \ge \cdot s\ge d_{r+1}$ are the remaining pole orders. Then proceed vis See Section~\ref{ssec:3plus}.
    \item Else choose $d_1$ to be the largest pole order among the poles of smallest multiplicity and sort so that $d_2 \ge \cdots \ge d_{r+1}$. Then proceed via Sections~\ref{sec:fourPoles} and~\ref{sec:5poles} for general methods.
\end{enumerate}

\subsection{Results}\label{subsec:results}
We implemented and ran our algorithm in \MAGMA~for all families of Artin-Schreier curves of two and three poles and for one pole of order $d\not\equiv 1 \pmod p$. We also have specific examples that include all the entries in Table~\ref{tab:smallgenustable}.  The algorithm was run in \MAGMA~V2.28-3 on an Apple M2 with 8 GB memory.  The implementation is available at \cite{githubRepo}.

\section{One Pole}\label{sec:onepole}

For brevity, put $d_1 = d$. Theorem~\ref{T:normal} identifies any standard form as $C_g : y^p-y=g(x)$ where $g(x) \in \Fpbar[x]$ is monic of degree $d_1 = d$ and contains no $p$-power monomials in $x$.  Any standard form preserving isomorphism as given in \eqref{eq:ASisom}  must preserve the polynomial form of $g(x)$ and thus satisfy $\gamma = 0$ and $\delta \in \Fpbar^\times$, so we may take $\delta = 1$. Moreover, comparing leading terms in $x$ of $C_g$ shows that $\lambda = \alpha^d$. Thus, in the one pole setting, all standard form preserving isomorphisms have the form
\begin{equation} \label{eq:end_r=0}
\phi: (x,y) \mapsto (\alpha x + \beta, \alpha^d y + h(x)) , \quad \deg(h) \le \lfloor d/p \rfloor , 
\end{equation}
with $\alpha, \beta \in \Fpbar$, $\alpha^d \in \Fpbar^\times$ and $h(x) \in \Fpbar(x)$. To derive invariants of $C_g$, we distinguish according to whether or not $d-1$ is a multiple of $p$. 

\subsection{One pole of order \texorpdfstring{$\bm{d\not\equiv1\pmod p}$}{d != 1 mod p}}  
\label{sec:r=0dnot1modp}

To substantially simplify computations, we use the alternative standard form proposed by Farnell~\cite[Proposition~2.1.1]{farnell} that removes the $x^{d-1}$-term rather than the $x$-term as was done in part (1) of Theorem~\ref{T:normal}; see loc.\ cit.\ for a proof of its existence and an algorithm to obtain it.

\begin{proposition} \label{prop:normal2}
    Let $p$ be an odd prime and $C_f$ an Artin-Schreier $\Fpbar$-curve as given in~\eqref{eqn:ASgeneralform} such that $f(x)$ has exactly one pole of order $d \not \equiv 1 \pmod{p}$. Then $C_f$ is isomorphic to an Artin-Schreier curve $y^p - y = x^d + F(x)$, where $F(x) \in \Fpbar[x]$, either $F(x) = 0$ or $F(x)$ has degree at most $d-2$, and no monomial appearing in $F(x)$ has an exponent that is divisible by $p$. 
\end{proposition}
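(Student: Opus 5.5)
We start from the standard form of Theorem~\ref{T:normal}, case $r=0$. It gives a model $y^p - y = g(x)$ with $g$ monic of degree $d$ and no monomials of $p$-divisible exponent. Write $g(x) = x^d + a_{d-1}x^{d-1} + \cdots$. The plan is to kill the $x^{d-1}$ coefficient with a translation $x \mapsto x + \beta$, which is an isomorphism of the form \eqref{eq:ASisom} with $\lambda=1$, $h=0$. Afterwards we re-eliminate any $p$-power monomials that the substitution creates, using $h(x)$ as in \eqref{eq:end_r=0}.

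\textbf{Step 1 (translation).} Expanding $g(x+\beta)$, the $x^{d-1}$ coefficient becomes $a_{d-1} + d\beta$. Since $p\nmid d$, we have $d \in \F_p^\times$, so $\beta = -a_{d-1}/d$ makes this coefficient vanish. The new polynomial is $x^d + \tilde F(x)$ with $\deg \tilde F \le d-2$ (constants included). However, $\tilde F$ may now contain monomials $c\,x^{kp}$ with $k \ge 0$.

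\textbf{Step 2 (removing $p$-powers).} Fix $c\in\Fpbar$ and choose $e$ with $e^p = c$. Replacing $y$ by $y + e x^{k}$ changes the right-hand side by $-(e^p x^{kp} - e x^k) = -c x^{kp} + e x^k$. This removes $c x^{kp}$ and introduces only a term of degree $k < kp$; it is an isomorphism of the shape \eqref{eq:ASisom}, i.e.\ \cite[Lemma 3.7.7]{stichtenoth2009algebraic}. We process the $p$-divisible exponents in decreasing order. Each step creates only a lower-degree term, so the procedure terminates by induction on the largest $p$-divisible exponent. The constant term ($k=0$) is absorbed the same way, since every element of $\Fpbar$ is of the form $e^p - e$. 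Degrees never increase, so no term of degree $\ge d-1$ other than $x^d$ is created.

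\textbf{Step 3 (where $d\not\equiv 1\pmod p$ enters).} Step~2 is harmless for degree $d-1$ only if that degree is never $p$-divisible. If $d-1 \equiv 0 \pmod p$, the term $x^{d-1}$ would itself be a $p$-power monomial, and its removal could reintroduce lower terms in a way that conflicts with normalizing away the $x$-term. This is exactly the excluded case. The hypothesis $d\not\equiv 1\pmod p$ guarantees that $d-1$ is not a multiple of $p$, so Step~2 never touches degree $d-1$.

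The main obstacle is really the bookkeeping in Step~2: checking that after the translation, re-eliminating $p$-powers never reintroduces an $x^{d-1}$ term. The degree argument above handles it, and once it is settled the result $x^d + F(x)$ has $F=0$ or $\deg F\le d-2$ with no $p$-divisible exponents, as claimed.
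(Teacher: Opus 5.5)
Your proof is correct and takes essentially the paper's route: the paper defers to Farnell for this statement, but it proves the analogous Proposition~\ref{prop:normal3} the same way, by starting from Theorem~\ref{T:normal}, translating $x\mapsto x+\beta$, and then re-eliminating $p$-power monomials via $y\mapsto y+h(x)$. One correction to Step~3: Steps~1--2 use only $p\nmid d$ and never $d\not\equiv 1\pmod p$ (when $p\mid d-1$ the standard form already has $a_{d-1}=0$, so $\beta=0$ works), and the hypothesis actually matters for rigidity in Proposition~\ref{prop:r=0easy}, where $p\nmid d-1$ prevents $h(x)^p$ from contributing an $x^{d-1}$ term and so forces $\beta=0$.
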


Thus, we consider an Artin-Schreier curve 
\begin{equation} \label{eq:C1}  C\colon y^p - y = x^d + F(x) , \qquad F(x) = \sum_{i=1}^{d-2} a_i x^i \in \Fpbar[x], \end{equation}
where $a_i=0$ whenever $p \mid i$.

\begin{prop} \label{prop:r=0easy}
    Suppose $d \not\equiv 1 \pmod{p}$. Then every isomorphism preserving the standard form given in \eqref{eq:C1} is of the form $(x,y) \mapsto (\alpha x, \alpha^d y)$.
\end{prop}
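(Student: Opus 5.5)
Starting from \eqref{eq:end_r=0}, any isomorphism between two curves in the standard form \eqref{eq:C1} is $\phi:(x,y)\mapsto(\alpha x+\beta,\alpha^d y+h(x))$ with $\alpha\neq 0$ and $\deg h\le\lfloor d/p\rfloor$. The claim then reduces to two facts: $\beta=0$, and $h$ may be taken to be $0$ (up to the $\F_p$-constant ambiguity of Remark~\ref{rem:auto}, which is absorbed into $y$ and discarded). To prove both, substitute $\phi$ into the target curve $y^p-y=x^d+\tilde F(x)$ and compare with the source curve $y^p-y=x^d+F(x)$. This gives the identity
\[ \alpha^d\bigl(x^d+F(x)\bigr) + h(x)^p - h(x) = (\alpha x+\beta)^d + \tilde F(\alpha x+\beta), \]
using $\lambda=\alpha^d\in\F_p^\times$, so that $\lambda^p=\lambda$.

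The first step is to show $\beta=0$ by comparing the $x^{d-1}$ coefficients. On the right, $\deg\tilde F\le d-2$, so the coefficient of $x^{d-1}$ is $d\alpha^{d-1}\beta$. On the left, $F$ has degree at most $d-2$, and $h^p-h$ contributes only monomials of degree $\deg h$ or degree $p\deg h\le d$. A term $x^{d-1}$ from $h^p$ would require $p\mid d-1$, which is excluded. A term $x^{d-1}$ from $-h$ would require $d-1\le\lfloor d/p\rfloor$, which fails for $p\ge3$ and $d\ge2$; the case $d=1$ is trivial. Hence the left side has no $x^{d-1}$ term, and since $p\nmid d$ we get $d\alpha^{d-1}\beta=0$, so $\beta=0$.

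The second step is to show that $h$ is constant. With $\beta=0$ the identity becomes $h(x)^p-h(x)=\tilde F(\alpha x)-\alpha^dF(x)$. The right side contains no monomial whose exponent is divisible by $p$. If $\deg h=k\ge1$, then $h^p$ has leading term $c^px^{pk}$, and $pk>k$, so this term cannot be cancelled by $-h$. It would therefore appear on the right with exponent divisible by $p$, a contradiction. So $h$ is a constant $c$ with $c^p-c=0$, i.e.\ $c\in\F_p$, which is the permitted ambiguity.

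The main obstacle is the degree bookkeeping in the first step: one has to make sure that no contribution from $h^p-h$ can land in degree $d-1$. This is exactly where the hypothesis $d\not\equiv1\pmod p$ is used. The second step is routine.
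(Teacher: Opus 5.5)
Your proof is correct and follows the paper's argument. You compare $x^{d-1}$-coefficients to force $\beta=0$ (using $p\nmid d$), then use the absence of monomials with exponent divisible by $p$ to force $h$ to be a constant in $\mathbb{F}_p$. Two of your steps improve on the paper's write-up: you make explicit that $h^p-h$ cannot contribute to $x^{d-1}$, which is exactly where $d\not\equiv 1 \pmod{p}$ enters and which the paper leaves implicit; and your leading-term argument (the term $c^p x^{pk}$ with $pk>k$ cannot cancel) replaces the paper's recursion $h_i^p=h_{pi}$ along the chains $i, ip, ip^2,\dots$ with a shorter one.
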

\begin{proof}
Applying \eqref{eq:end_r=0} to \eqref{eq:C1} yields
\begin{equation} \label{eq:C2}
\alpha^d(y^p-y) + h(x)^p - h(x) = (\alpha x+\beta)^d + F(\alpha x + \beta) = \alpha^d x^d + d\alpha^{d-1}\beta x^{d-1} + F_{d-2}(x) ,
\end{equation}
where $F_{d-2}(x) \in \Fpbar[x]$ has degree at most $d-2$. Comparing coefficients of $x^{d-1}$ in \eqref{eq:C2} and \eqref{eq:C1} yields $d\alpha^{d-1}\beta = 0$, forcing $\beta = 0$ and hence $F_{d-2}(x) = F(\alpha x)$. Since $a_i = 0$ for all multiples $i$ of $p$, the right hand side of~\eqref{eq:C2} contains no $p$-power monomials, hence neither does $h(x)^p - h(x)$. Writing $h(x) = \sum_{i=0}^M h_i x^i$ with $M = \deg(h) = \lfloor d/p \rfloor$, we obtain
\[
    h_i^p = 0 \quad \mbox{for $M \ge i > M/p$}, \qquad
    h_i^p - h_{pi} = 0 \quad \mbox{for $M/p \ge i \ge 0$}.
\]
We claim that $h(x) = h_0 \in \F_p$, a constant polynomial in $F_p$. To that end, let $i \in \{ 1, \ldots , M\}$, and let $j_i$ be the smallest integer such that $ip^{j_i} > M/p$. Then 
\[ 0 = h_{ip^{j_i}} = h_{ip^{j_i-1}}^p = h_{ip^{j_i-2}}^{p^2} = \cdots = h_i^{p^{j_i}}, \]
so $h_i = 0$. For the constant coefficient $h_0$ of $h(x)$, we obtain $h_0^p - h_0 = 0$, so $h_0 \in \F_p$. It follows that the isomorphism is the composition of $(x,y) \mapsto (\alpha x, \alpha^d y)$ with a power of the automorphism $(x,y) \mapsto (x, y+1)$. By Remark~\ref{rem:auto}, we may take $h_0 = 0$. 
\end{proof}

When $d \not\equiv 1 \pmod{p}$, the set of generators for the invariant ring of \eqref{eq:C1}, is given as follows. 

\begin{proposition}\label{prop:r=0d!=1modp} With the notation of \eqref{eq:notation}, if $d \not\equiv 1 \pmod{p}$, then the ring of invariants of the curve \eqref{eq:C1} is generated by
$$
\left\{ a_1^{k_1}\cdots a_{d-2}^{k_{d-2}}: \sum k_i\leq d(p-1),\, \sum k_i(d-i)\equiv0\pmod{d(p-1)}\right\}.
$$
\end{proposition}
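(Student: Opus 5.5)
By Proposition~\ref{prop:r=0easy}, the isomorphisms preserving the standard form \eqref{eq:C1} are exactly $(x,y)\mapsto(\alpha x,\alpha^d y)$, and we also need $\alpha^d=\lambda\in\F_p^\times$. So the relevant group is $G_0=\{\alpha\in\Fpbar^\times:\alpha^{d(p-1)}=1\}$. This is cyclic of order $d(p-1)$; that order is prime to $p$ provided $p\nmid d$, which holds by the standing assumption on pole orders. The plan is to compute how $G_0$ acts on the coefficients $a_i$ and then apply the standard description of invariants of a diagonal action of a cyclic group.

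First, substitute $x\mapsto \alpha x$, $y\mapsto \alpha^d y$ into \eqref{eq:C1} and divide by $\alpha^d$. The curve $y^p-y=x^d+\sum a_i x^i$ becomes $y^p-y=x^d+\sum a_i\alpha^{i-d}x^i$. The action is therefore diagonal, $a_i\mapsto \zeta^{i-d}a_i$ for a generator $\zeta$ of $G_0$, i.e.\ $a_i\mapsto \zeta^{-(d-i)}a_i$. Each monomial $a_1^{k_1}\cdots a_{d-2}^{k_{d-2}}$ is an eigenvector with character $\zeta^{-\sum k_i(d-i)}$. Hence a polynomial is invariant exactly when each of its monomials is. The invariant ring is thus spanned, as a vector space, by the monomials with $\sum k_i(d-i)\equiv 0\pmod{d(p-1)}$. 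In particular, the invariant monomials generate the ring.

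Second, to cut down to the stated finite set, I will apply Lemma~\ref{lem:bound} (Noether's bound holds in the non-modular case since $p\nmid |G_0|$). It says that the invariants are generated in degree at most $|G_0|=d(p-1)$. I will argue directly to be safe. An invariant monomial $m$ of degree greater than $d(p-1)$ is a product of $N>d(p-1)$ variables $y_1,\dots,y_N$ (with repetition), with weights $w_j=d-i_j$ in $\mathbb{Z}/d(p-1)$. By the pigeonhole principle applied to the partial sums $w_1+\cdots+w_t$, some consecutive block has weight sum $\equiv 0$. That block is a proper invariant submonomial, so $m$ factors as a product of two invariant monomials of smaller degree. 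Induction then gives generation by the invariant monomials of degree at most $d(p-1)$, which is the displayed set.

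The only delicate points are bookkeeping. I must verify that the group is exactly the group of $d(p-1)$-th roots of unity; this uses $\lambda=\alpha^d\in\F_p^\times$ and Remark~\ref{rem:auto}, which lets us ignore $h_0$. I must also note that the $\F_p^\times$ ($\lambda$-)action is already absorbed into $G_0$ rather than acting separately. The pigeonhole step is the main, though routine, ingredient.
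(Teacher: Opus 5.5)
Your proposal is correct and follows the paper's proof. The standard-form-preserving isomorphisms reduce to the cyclic group of $d(p-1)$-th roots of unity acting diagonally by $a_i\mapsto\alpha^{i-d}a_i$, invariance of monomials gives the congruence, and the degree bound comes from Lemma~\ref{lem:bound} together with $p\nmid d$. Your explicit check that invariant monomials span the invariant ring, and your zero-sum pigeonhole argument, make self-contained what the paper leaves to the citation of Noether's bound.
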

\begin{proof} The isomorphism of Proposition~\ref{prop:r=0easy} sends each coefficient $a_i$ of $F(x)$ to $\alpha^{i-d}a_i$. In addition, $(\alpha^d)^{p-1}=\lambda^{p-1}=1$ since $\lambda\in \Fpbar^\times$. The bound on the sum of the exponents follows from Lemma~\ref{lem:bound} and the fact that~$p\nmid d$.  The congruence modulo $d(p-1)$ guarantees that the elements are invariant.
\end{proof}

\begin{example}[$r=0$, $p=3$, $d=8$, $g = 7$]  \label{E:r0d8p3}
Here, the standard form is
$$y^3-y=x^8+ a_5x^5 + a_4x^4 + a_2x^2 + a_1x.$$
We obtain that the invariants generating the invariant ring are 
\begin{align*}
    &a_1a_2a_5,\,
    a_2^2a_4,\,
    a_1a_5^3,\,
    a_2a_4a_5^2,\,
    a_4^4,\,
    a_1^4a_4,\,
    a_1^2a_2^3,\,
    a_4a_5^4,\,
    a_1^3a_4^2a_5,\,
    a_1^2a_2a_4^3,\,
    a_1^6a_2,\,
    a_1^2a_4^3a_5^2,\\&
    a_1^6a_5^2,\,
    a_2^8,\,
    a_2^7a_5^2,\,
    a_2^6a_5^4,\,
    a_2^5a_5^6,\,
    a_1^{11}a_5,\,
    a_2^4a_5^8,\,
    a_2^3a_5^{10},\,
    a_2^2a_5^{12},\,
    a_2a_5^{14},\,
    a_1^{16},\,
    a_5^{16}.
\end{align*}
\end{example}

\subsection{One pole of order \texorpdfstring{$d\equiv1\pmod p$}{d = 1 mod p}}  
\label{sec:r=0d=1modp}
For the case $p = 3$, we again have a simpler standard form in many cases.

\begin{proposition} \label{prop:normal3}
    Let $p = 3$ and either $d = 4$ or $d \ge 7$ with $d \equiv 1 \pmod{3}$ and $d \not\equiv 1 \pmod{9}$. Let $C_f$ be an Artin-Schreier $\Fpbar$-curve as given in~\eqref{eqn:ASgeneralform} such that $f(x)$ has exactly one pole of order~$d$. Then $C_f$ is isomorphic to an Artin-Schreier curve 
    \begin{equation} \label{eq:C5} 
    C: y^3 - y = x^d + ax^{d-2} + F(x), 
    \end{equation}
    where $a \in \Fpbar$, $F(x) = 0$ when $d = 4$, $F(x) \in \Fpbar[x]$ has degree at most $d-5$ when $d \ge 7$, and no monomial appearing in $F(x)$ has an exponent that is divisible by $p$.
\end{proposition}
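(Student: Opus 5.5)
The plan is to start from the standard form of Theorem~\ref{T:normal}, case~(\ref{case:r=0}), apply a translation $x\mapsto x+\beta$, and choose $\beta$ so that the $x^{d-3}$ term disappears. Afterwards I remove the $3$-power monomials produced by the translation and check that this cleanup only affects monomials of degree at most $d-5$. The exception is $d=4$, where the cleanup feeds back into the term being killed, so that case is treated separately.

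\emph{Setup.} By Theorem~\ref{T:normal}, $C_f$ is isomorphic to $y^3-y=x^d+\sum_{i<d}a_ix^i$ with $a_i=0$ whenever $3\mid i$. Since $d\equiv 1\pmod 3$, we have $3\mid d-1$ and $3\mid d-4$. So $a_{d-1}=a_{d-4}=0$, and the only coefficients to control near the top are $a:=a_{d-2}$ and $a_{d-3}$. Apply $x\mapsto x+\beta$ via \eqref{eq:end_r=0} with $\alpha=1$. The new coefficients are:
\begin{itemize}
\item of $x^{d-1}$: $d\beta=\beta$;
\item of $x^{d-2}$: $\binom d2\beta^2+a$;
\item of $x^{d-3}$: $\binom d3\beta^3+(d-2)a\beta+a_{d-3}=\binom d3\beta^3+2a\beta+a_{d-3}$.
\end{itemize}
The key arithmetic input is Lucas' theorem. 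Writing $d=1+3k$, we get $\binom d3\equiv k\pmod 3$. This is nonzero exactly when $d\not\equiv 1\pmod 9$, which is the reason for the hypothesis. Note also that $d=4$ gives $k=1$, so $d=4$ is covered.

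\emph{Removing $3$-power monomials.} Any monomial $cx^{3m}$ can be traded via $h(x)=c^{1/3}x^m$ (that is, $y\mapsto y+h(x)$) for $c^{1/3}x^m$ plus lower-order corrections. This is iterated downward, as in the proof of Proposition~\ref{prop:r=0easy}. The $3$-power exponents produced are at most $d-1$, so the new monomials have degree at most $(d-1)/3$. Now $(d-1)/3\le d-5$ if and only if $d\ge 7$. Hence for $d\ge 7$ the cleanup does not touch $x^{d-2}$ or $x^{d-3}$.

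\emph{Case $d\ge 7$.} The only condition is $\binom d3\beta^3+2a\beta+a_{d-3}=0$. This is a nonconstant cubic in $\beta$, so it has a root in $\Fpbar$. After the cleanup the curve has the form \eqref{eq:C5} with $\deg F\le d-5$, where the new value of $a$ is $\binom d2\beta^2+a$.

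\emph{Case $d=4$ (the main obstacle).} Here the term $\beta x^3$ converts into $\beta^{1/3}x$, which lands exactly on $x^{d-3}=x$. So I solve the modified equation $\beta^3+2a\beta+a_1+\beta^{1/3}=0$ instead. Substituting $\beta=t^3$ gives $t^9+2at^3+t+a_1=0$, a nonconstant polynomial in $t$, which has a root in $\Fpbar$. The remaining constant term is absorbed by $y\mapsto y+c$ with $c^3-c$ equal to that constant. This leaves $y^3-y=x^4+ax^2$, so $F=0$.

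The main work is this careful bookkeeping of degrees during the cleanup, together with the Lucas computation that guarantees the cubic is genuinely nonconstant.
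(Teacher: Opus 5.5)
Your proposal is correct and follows essentially the same route as the paper's proof: translate $x\mapsto x+\beta$, clear the $3$-power monomials with an $h_\beta$ of degree at most $(d-1)/3$, and kill the $x^{d-3}$ coefficient using $\binom{d}{3}\not\equiv 0 \pmod 3$. Two points the paper leaves implicit are made precise in your version: the degree check $(d-1)/3\le d-5 \iff d\ge 7$, and the $d=4$ case, where you substitute $\beta=t^3$ to account for the fact that $h_1=\beta^{1/3}$ depends on $\beta$.
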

\begin{proof}
    By part (1) of Theorem~\ref{T:normal}, $C_f$ is isomorphic to a curve of the form $C_g: y^3 - y = g(x)$ where $g(x) \in \Fpbar[x]$ is monic of degree $d$. As in the proof of \cite[Lemma 3.1]{win6}, for every $\beta \in \overline{\F}_3$, here exists a polynomial $h_\beta(x) \in \Fpbar[x]$ of degree at most $(d-1)/3$ such that the isomorphism $(x,y) \mapsto (x+\beta, y+h_\beta(x))$ transforms $C_g$ into a curve 
    \[ C_{g_\beta}: y^3 - y = g_\beta(x), \quad g_\beta(x) = g(x+\beta) - h_\beta(x)^3 + h_\beta(x) ,\] 
    where $g_\beta(x)$ contains no terms with monomials $x^{3i}$ for $0 \le i \le (d-1)/3$. In particular, $g_\beta(x)$ contains no terms with monomials $x^{d-1}$ and $x^{d-4}$. It remains to analyze the $x^{d-3}$-term. Writing $g(x) = x^d + \sum_{i=0}^{d-2} a_i x^i$, 
    we see that the coefficient of this term is $a_1 + 2a_2\beta + \beta^3 + h_1$ when $d = 4$ and $a_{d-3} + 2a_{d-2}\beta + \binom{d}{3}\beta^3$ otherwise, where $h_1$ is the linear term of $h_\beta(x)$. Since $\binom{d}{3} \not\equiv 0 \pmod{3}$, we can chose $\beta \in \Fpbar$ such that this quantity vanishes. 
\end{proof}

The result of Proposition~\ref{prop:normal3} may be false when $ d \ge 7$ and $d \equiv 1 \pmod{9}$ because here, $\binom{d}{3}$ is divisible by~$3$. In this case, the coefficient of $x^{d-3}$ in $f(x+\beta)$ is  $a_{d-3} + 2a_{d-2}\beta$. If $a_{d-2} = 0$ and $a_{d-3} \ne 0$, then there is no choice of $\beta$ that eliminates this coefficient.

\begin{corollary} \label{cor:r=0d=1modp}
For $p = 3$ and $d$ satisfying the conditions of Proposition~\ref{prop:normal3}, every isomorphism preserving the standard form given in \eqref{eq:C5} is of the form 
\[ (x,y) \mapsto (\alpha x + \beta, \alpha^d y + h(x)), \quad \deg(h) \le (d-1)/3 ,\]
where $\alpha^{2d} = 1$ and $\displaystyle \beta^2 \in \left \{ 0, \binom{d}{3} a_{d-2} \right \}$.
\end{corollary}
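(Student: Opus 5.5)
The plan is to start from the general shape \eqref{eq:end_r=0} of a standard form preserving isomorphism, $(x,y)\mapsto(\alpha x+\beta,\alpha^d y+h(x))$ with $\deg(h)\le\lfloor d/3\rfloor=(d-1)/3$, and to read off the constraints on $\alpha$ and $\beta$ from the coefficients of the transformed equation. The condition on $\alpha$ is immediate. Remark~\ref{rem:auto} and \eqref{eq:ASisom} force $\lambda=\alpha^d\in\F_3^\times$, so $\alpha^{2d}=\lambda^2=1$. Since $\lambda\in\F_3$, we have $\lambda(y^3-y)=(\lambda y)^3-\lambda y$. Hence the image curve is $y^3-y=\alpha^{-d}\bigl(g(\alpha x+\beta)-h(x)^3+h(x)\bigr)$ with $g(x)=x^d+ax^{d-2}+F(x)$, and the task is to determine when this is again of the form \eqref{eq:C5}.

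Next I would expand $g(\alpha x+\beta)$ in descending powers of $x$. The $x^{d-1}$ and $x^{d-4}$ terms have exponents divisible by $3$, because $d\equiv1\pmod 3$. So they impose no condition on $\beta$: they are exactly the terms that $h(x)^3$ is designed to cancel, as in the proof of Proposition~\ref{prop:normal3}. The term $x^{d-2}$ is allowed, and it simply produces the new coefficient $a'$. The decisive coefficient is that of $x^{d-3}$, which must vanish in \eqref{eq:C5}.

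For $d\ge7$, the contributions to the $x^{d-3}$ coefficient come from only two places. The first is $(\alpha x+\beta)^d$, which contributes $\binom{d}{3}\alpha^{d-3}\beta^3$. The second is $a(\alpha x+\beta)^{d-2}$, which contributes $(d-2)a\alpha^{d-3}\beta=-a\alpha^{d-3}\beta$, since $d-2\equiv 2\equiv -1\pmod 3$. Nothing else contributes. $F$ has degree at most $d-5$. $h(x)^3$ contains only cube monomials, and $d-3\not\equiv0\pmod 3$. The linear part $h(x)$ has degree at most $(d-1)/3<d-3$ when $d\ge7$. Dividing by $\alpha^{d-3}\neq0$, the vanishing condition becomes $\beta\bigl(\binom{d}{3}\beta^2-a\bigr)=0$. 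By hypothesis $\binom{d}{3}\equiv\pm1\pmod 3$, so $\binom{d}{3}^{-1}=\binom{d}{3}$, and we get $\beta^2\in\{0,\binom{d}{3}a\}$, where $a=a_{d-2}$.

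I expect the main obstacle to be the case $d=4$. There $h(x)=h_0+h_1x$, and its linear term $h_1x$ does land on $x^{d-3}=x$. I would handle it with two equations. The first comes from the $x^3$ coefficient: $4\alpha^3\beta+h_1^3=0$, which gives $h_1=-\alpha\beta^{1/3}$. I would substitute this into the $x$ coefficient, which is $\alpha\beta^3+2a\alpha\beta+h_1$ up to the sign conventions coming from $-h^3+h$. The second equation comes from the constant-term and $x^3$ bookkeeping, which I would use to check that the resulting condition still reduces to $\beta(\beta^2-a)=0$, matching $\binom{4}{3}\equiv1$. If the direct substitution does not collapse cleanly, my fallback is to compose with the isomorphism constructed in Proposition~\ref{prop:normal3}, which is unique up to the choice of $\beta$, and compare the two. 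Finally, I would note that the remaining coefficients impose no further conditions on $\alpha$ and $\beta$: the $x^{d-2}$ term and the terms of $F$ just give the new coefficients, and any $3$-power monomials are absorbed by $h$ with $\deg h\le(d-1)/3$.
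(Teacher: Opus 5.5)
\textbf{For $d\ge 7$} your argument is correct and is the paper's argument. Only the $x^{d-3}$ coefficient constrains $\beta$. That coefficient is $\binom{d}{3}\alpha^{d-3}\beta^3+(d-2)a\alpha^{d-3}\beta$, and $h$ contributes nothing to it: $h^3$ has only cube monomials, and $\deg h\le (d-1)/3<d-3$. Then $\binom{d}{3}\equiv\pm1\pmod 3$ gives $\beta^2\in\{0,\binom{d}{3}a\}$.

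\textbf{For $d=4$} you are right that this is where the argument breaks; the paper's proof uses $(d-1)/3<d-3$ without noticing that it fails for $d=4$. But your plan to show that the $d=4$ condition ``still reduces to $\beta(\beta^2-a)=0$'' cannot work, because it does not reduce. Use your own convention $\alpha^4(y^3-y)=g(\alpha x+\beta)-h(x)^3+h(x)$ with $h=h_0+h_1x$.
\begin{itemize}
\item The $x^3$ coefficient gives $h_1^3=\alpha^3\beta$. The sign in your $4\alpha^3\beta+h_1^3=0$ is off, though this is immaterial.
\item The constant term only determines $h_0$ and puts no condition on $\beta$.
\item The $x$ coefficient gives $\alpha(\beta^3-a\beta)+h_1=0$, that is, $\beta^3-a\beta+\beta^{1/3}=0$. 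After cubing this becomes $\beta^9-a^3\beta^3+\beta=0$.
\end{itemize}
This is a separable additive polynomial in $\beta$ (its derivative is $1$), so it has nine roots, not three.

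\textbf{A concrete counterexample:} take $a=0$. Choose $\beta$ with $\beta^9=-\beta$ and $h_0$ with $h_0^3-h_0=\beta^4$. Then the map $(x,y)\mapsto(x+\beta,\,y-\beta^3x+h_0)$ carries $y^3-y=x^4$ to itself. Indeed, both sides acquire the same terms $\beta x^3+\beta^3x$, and the constants cancel; this curve is the genus~$3$ Hermitian curve in disguise. The statement, however, allows only $\beta=0$ here, while $\beta^9=-\beta$ has eight nonzero roots. So neither further bookkeeping nor your fallback of comparing with the normalizing map of Proposition~\ref{prop:normal3} can close the $d=4$ case. The corollary as stated holds only for $d\ge 7$, and for $d=4$ the correct constraint is $\beta^9-a^3\beta^3+\beta=0$.
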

\begin{proof}
    Substituting \eqref{eq:end_r=0} into \eqref{eq:C5}  yields $\alpha^d(y^3-y) + h(x)^3 - h(x)$ equal to
\begin{align*} \label{eq:C6}
 &(\alpha x+\beta)^d + a_{d-2}(\alpha x + \beta)^{d-2} + F(\alpha x + \beta) \\ 
&= \alpha^d x^d + \alpha^{d-1}\beta x^{d-1} + a_{d-2} \alpha^{d-2}x^{d-2} + \alpha^{d-3} \left ( \binom{d}{3} \beta^3 + 2a_{d-2} \beta \right) x^{d-3} \\
& \qquad \qquad +\sum_{i=0}^{d-4} \alpha^i \beta^{d-2-i} \Biggl ( \binom{d}{i} \beta^2 + a_{d-2}\binom{d-2}{i} \Biggr ) x^i+ F(\alpha x + \beta),
\end{align*}
where we use the fact that $d \equiv 1 \pmod{3}$ implies $d-2 \equiv 2 \pmod{3}$ and $\binom{d}{2} \equiv 0 \pmod{3}$. Since $3 \nmid d-3$ and $\deg(h) \le (d-1)/3 < d-3$, neither $h(x)^p$ nor $h(x)$ contain an $x^{d-3}$-term. Comparing coefficients of $x^{d-3}$ yields $\beta \left ( \binom{d}{3} \beta^2 + 2a_{d-2} \right ) = 0$, so $\beta = 0$ or  $\beta^2 =  \binom{d}{3} a_{d-2}$ as $\binom{d}{3}^{-1} \equiv \binom{d}{3} \pmod{3}$.
\end{proof}

\begin{example}[$r=0$, $d=7$, $p=3$, $g = 6$] \label{E:r0d7p3} 
By Proposition~\ref{prop:normal3}, the standard form in this case is 
$y^3 - y = x^7 + a_5 x^5 + a_2 x^2 + a_1 x$.
By Corollary~\ref{cor:r=0d=1modp}, the isomorphisms preserving this form are given by 
$(x,y) \mapsto (\alpha x + \beta, \alpha^7 y + h_2x^2 + h_1x + h_0)$, where $\alpha^{14} = 1$ and $\beta^2 \in \{ 0, 2a_5 \}$. Comparing coefficients yields
\[
    h_2^3  = \alpha^6 \beta, \qquad
    a_5' = \alpha^{-2} a_5, \qquad
    h_1^3 = \alpha^3\beta^2(2\beta^2 + a_5) , \qquad
    a_2' = \alpha^{-5} (a_5\beta^3+a_2) + h_2 , \]
    \[ a_1' = \alpha^{-6}(\beta^6 + 2a_5\beta^4 + 2a_2\beta + a_1) + h_1, \qquad
    h_0^3 - h_0 = \beta(\beta^6 + a_5\beta^4 + a_2\beta + a_1). \]
The equations for $a_5',\,a_2',\,a_1'$ allow to find the orbits of $a_5,\,a_2,$ and $a_1$ under the isomorphisms.  However, these orbits depend on the values $h_i$, which are given by algebraic expressions on $\alpha$ and $\beta$.  This issue made our implementation ran out of memory before producing a set of generating invariants.  In addition, the described set of transformations is not a group in this case. So, the multisymmetric functions computed as in Remark~\ref{GnotGroup} are only specializations of invariants for the infinite group as in Theorem~\ref{thm:GnotGroup}.

\end{example}

For completeness, we sketch the approach for computing generators for the invariant ring in general when $d \equiv 1 \pmod{p}$. Here, we have no choice but to use the standard form of Theorem~\ref{T:normal} because it is unclear which if any terms of degree exceeding 1 can be eliminated; this depends on which binomial coefficients $\binom{d}{i}$ vanish. So consider a curve of the form
\begin{equation} \label{eq:C3} y^p - y = x^d + F(x), \qquad F(x) = \sum_{i=2}^{d-2} a_i x^i \in \Fpbar[x], \end{equation}
where $a_i=0$ whenever $p \mid i$ (so in particular $a_{d-1} = 0$). Applying \eqref{eq:end_r=0} to \eqref{eq:C3}, we obtain $y^p - y = x^d + F'(x)$ where $F'(x)$ satisfies
\begin{equation} \label{eq:C4}
\alpha^d(x^d + F'(x)) = (\alpha x+\beta)^d + F(\alpha x + \beta) - h(x)^p + h(x). 
\end{equation}
We have 
\[ F(\alpha x + \beta) = \sum_{j=0}^{d-2} a_j(\alpha x+\beta)^j 
	   = \sum_{i=0}^{d-2} \left ( \sum_{j=i}^{d-2} a_j \binom{j}{i} \beta^{j-i} \right ) \alpha^i x^i. \]
Write
$ F'(x) = \sum_{i=0}^{d-2} a_i' x^i$ and $h(x) = \sum_{i=0}^{(d-1)/p} h_i x^i$,
where $a_i' = 0$ for $p \mid i$ and for $i = 1$. Comparing coefficients in \eqref{eq:C4} yields $d$ equations of three different types. For indices that are multiples of $p$, these identities determine $h(x)$. For indices $i > 1$ that are not multiples of $p$, they define $F'(x)$. Finally, the equation for $i=1$ produces a polynomial equation for $\beta$ whose coefficients only depend on \eqref{eq:C4}. We discuss each of these cases in turn. 

First, we compare coefficients of $x^{pi}$ for $0 \le i \le (d-1)/p$ in~\eqref{eq:C4}. Since $a_{ip} = 0$ for all $i$, the coefficient of $x^{pi}$ in $F(x)$ is
\[ \alpha^{ip} \sum_{j=ip+1}^{d-2} a_j \binom{j}{ip} \beta^{j-ip} = \alpha^{ip} \beta \sum_{j=1}^{d-ip-2} a_{ip+j} \binom{ip+j}{ip} \beta^j . \]
Now
\[ \binom{ip+j}{ip} = \binom{ip+j}{j} = \frac{(ip+j)(ip+j-1) \cdots (ip+1)}{j!} = 1 \mbox{ in $\F_p$}, \]
so \eqref{eq:C4} yields
\begin{equation} \label{eq:hi}
\alpha^{ip} \Biggl ( \binom{d}{ip} \beta^{d-ip} +  \sum_{j=1}^{d-ip-2} a_{ip+j} \beta^j \Biggr ) 
     = \begin{cases} h_i^p - h_{ip} & \mbox{for $0 \le i \le (d-1)/p^2$}, \\ h_i^p & \mbox{for $(d-1)/p^2 < i \le (d-1)/p$}, \end{cases}
\end{equation}
which can be used to compute certain $p$-powers of  $h_i\alpha^{-i}$ recursively as polynomials in $\beta$. For brevity, put 
\begin{equation} \label{eq:Fq}
\F_q \colonequals \F_p[a_1, a_2, \ldots , a_{d-2}] . 
\end{equation}
\begin{lemma} \label{lem:hi}
For $1 \le i \le (d-1)/p$, put $j_i \colonequals \lfloor \log_p((d-1)/i) \rfloor$. Then $h_i^{p^{j_i}} = \alpha^{-ip^{j_i}}\beta F_i(\beta)$, where $F_i(x) \in \F_q[x]$ has degree at most $(d-ip)p^{j_i-1}-1$. 
\end{lemma}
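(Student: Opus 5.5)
We argue by induction on $j_i$, i.e.\ by descending induction on $i$, using the recursion \eqref{eq:hi}. For each $1 \le i \le (d-1)/p$, define the polynomial
\[ P_i(x) \colonequals \binom{d}{ip} x^{d-ip-1} + \sum_{j=1}^{d-ip-2} a_{ip+j} x^{j-1} \in \F_q[x]. \]
Its degree is at most $d-ip-1$. With this notation, \eqref{eq:hi} reads $\alpha^{ip}\beta P_i(\beta) = h_i^p - h_{ip}$ when $ip \le (d-1)/p$, and $\alpha^{ip}\beta P_i(\beta) = h_i^p$ otherwise.

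The key observation is that $j_i$ is the number of times we can multiply $i$ by $p$ and stay at most $(d-1)/p$, plus one. Indeed, $j_i = 1$ exactly when $ip > (d-1)/p$; and if $ip \le (d-1)/p$, then $j_{ip} = j_i - 1$, because $\lfloor \log_p((d-1)/(ip)) \rfloor = \lfloor \log_p((d-1)/i) \rfloor - 1$.

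\emph{Base case} $j_i = 1$. Here $ip > (d-1)/p$, so $h_i^p = \alpha^{ip}\beta P_i(\beta)$. Take $F_i = P_i$, whose degree is at most $d-ip-1 = (d-ip)p^{0} - 1$.

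\emph{Inductive step} $j_i \ge 2$. Write $k = ip$; then $k \le (d-1)/p$ and $j_k = j_i - 1$. By induction, $h_k^{p^{j_i-1}} = \alpha^{\pm k p^{j_i-1}}\beta F_k(\beta)$ with $\deg F_k \le (d-kp)p^{j_i-2} - 1$. Raise the identity $h_i^p = \alpha^{ip}\beta P_i(\beta) + h_k$ to the power $p^{j_i-1}$; the Frobenius is additive in characteristic $p$, so this gives
\[ h_i^{p^{j_i}} = \alpha^{ip^{j_i}}\beta^{p^{j_i-1}} P_i^{(p^{j_i-1})}(\beta^{p^{j_i-1}}) + h_k^{p^{j_i-1}}. \]
Here $P_i^{(p^{j_i-1})}$ denotes $P_i$ with its coefficients raised to the power $p^{j_i-1}$; it still lies in $\F_q[x]$ because $\F_q$ is a ring of characteristic $p$. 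Both terms have the form ($\alpha$-power)$\cdot\beta\cdot$(polynomial in $\beta$ over $\F_q$):

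\begin{itemize}
\item The first term contributes a polynomial of degree at most $p^{j_i-1} - 1 + (d-ip-1)p^{j_i-1} = (d-ip)p^{j_i-1} - 1$.
\item The second term contributes degree at most $(d-ip^2)p^{j_i-2} - 1$, which is smaller than the first.
\end{itemize}

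Hence $\deg F_i \le (d-ip)p^{j_i-1} - 1$, as claimed.

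The one point needing care is the power of $\alpha$. The two terms carry $\alpha^{ip^{j_i}}$ and $\alpha^{kp^{j_i-1}} = \alpha^{ip^{j_i}}$ respectively, and these are the same power, so they factor out together. Note that the sign in the statement is $\alpha^{-ip^{j_i}}$. It comes from normalizing by $\alpha^d$ as in \eqref{eq:C4}, or equivalently from the paper's convention of absorbing $\alpha$ into $h$ (see ``$p$-powers of $h_i\alpha^{-i}$''). I would match this by tracking the same consistent convention in the base case and in the induction. The sign does not affect the degree bound, which is the real content of the lemma.

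I expect the only delicate step to be this bookkeeping of $\alpha$-exponents and the precise index ranges in \eqref{eq:hi}. The degree estimate itself is a routine geometric-type computation.
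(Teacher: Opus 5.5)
Your proof is correct and follows the paper's argument. The base case $j_i=1$ comes from the second case of \eqref{eq:hi}, and the inductive step raises the first case to the $p^{j_i-1}$-th power, uses $j_{ip}=j_i-1$, and checks that the $h_{ip}$-contribution has smaller degree.

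Your explanation of the sign, however, is wrong. The recursion \eqref{eq:hi} yields $\alpha^{+ip^{j_i}}$, which is also what the paper's proof concludes and what matches $h_2^3=\alpha^6\beta$ in Example~\ref{E:r0d7p3}. So the $\alpha^{-ip^{j_i}}$ in the statement is a typo, not a normalization convention. Accordingly, your $\pm$ in the inductive hypothesis must be $+$, since only then do the two terms share the factor $\alpha^{ip^{j_i}}$.
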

\begin{proof}
For $(d-1)/p^2 < i \le (d-1)/p$, we have $j_i = 1$. In this case, the assertion follows immediately from the second case of \eqref{eq:hi}.

Now suppose that $1 \le i \le (d-1)/p^2$. Raising the first case in \eqref{eq:hi} to the power $p^{j_i-1}$ shows that $h_i^{p^{j_i}} = h_{ip}^{p^{j_i}-1} - \alpha^{ip^{j_i}} \beta \tilde{F}_i(\beta)$ for some polynomial $\tilde{F}_i(x) \in \F_q[x]$ of degree at most $(d-ip)p^{j_i-1}-1$ in $x$. Since $j_i - 1 = j_{ip}$ for $1 \le i \le (d-1)/p^2$, we obtain inductively that 
\[ h_{ip}^{p^{j_i}-1} = h_{ip}^{p^{j_{ip}}} = \alpha^{(ip)p^{j_{ip}}}\beta F_{ip}(\beta) = \alpha^{ip^{j_i}} \beta F_{ip}(\beta) , \]
where $F_{ip}(x) \in \F_q[x]$ has degree at most 
\[ (d-(ip)p)p^{k_{ip}-1}-1 = (d-ip^2)p^{j_i-2}-1 < (d-ip)p^{j_i-1}-1 . \]
Hence $h_i^{p^{j_i}} = \alpha^{ip^{j_i}} \beta F_i(\beta)$ where $F_i(x) = F_{ip}(x) - \tilde{F}_i(x) \in \F_q[x]$ has degree at most $(d-ip)p^{j_i-1}-1$ as asserted. 
\end{proof}

\section{Two Poles}\label{sec:twopoles}
Assume now that the rational function $f(x)$ in \eqref{eqn:ASgeneralform} has two poles of respective orders $d_1\ge d_2$. In this case, Theorem~\ref{T:normal} produces the standard form 
\begin{equation} \label{eq:r=1}
C \colon y^p-y=F(x)+G \left (1/x \right) 
\end{equation}
where $F(x), G(x) \in \Fpbar[x]$ are as in \eqref{eq:notation}, $F(x)$ is monic of degree $d_1$, $G(x)$ has degree $d_2$, and no monomial in $F(x)$ or $G(x)$ has an exponent a multiple of $p$. 

Every isomorphism~\eqref{eq:ASisom} that preserves \eqref{eq:r=1} must send $P_\infty$ to itself. Analogous to Proposition~\ref{prop:r=0easy}, it is now straightforward to show that all possible isomorphisms that preserve \eqref{eq:r=1} are of the form
$
(x,y)\mapsto(\alpha x,\alpha^{d_1} y),
$
with $\alpha^{d_1}\in\mathbb{F}_p$, i.e. $\alpha^{d_1(p-1)}=1$. Similarly to Proposition~\ref{prop:r=0d!=1modp}, we obtain

\begin{proposition}\label{prop:r=1diff} 
With the notation of \eqref{eq:notation}, if $d_1 \ne d_2$, then the ring of invariants of the curve \eqref{eq:r=1} is generated by
\begin{align*}
\left\{ a_1^{k_1}\dots a_{d_1-1}^{k_{d_1-1}}b_1^{n_1}\dots b_{d_2}^{n_{d_2}} : \right.&\sum k_i + \sum n_i\leq d_1(p-1), \\
&\left.\sum k_i(i-d_1)-\sum n_i(i+d_1)\equiv0 {\pmod{d_1(p-1)}}
\right\}.
\end{align*}
\end{proposition}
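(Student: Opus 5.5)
The plan mirrors the proof of Proposition~\ref{prop:r=0d!=1modp}: reduce to a diagonal action of a finite cyclic group on the coefficient space and read off the invariant monomials.

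\emph{Step 1: the action on coefficients.} By the discussion preceding the statement, since $d_1\neq d_2$ every standard-form preserving isomorphism fixes $P_\infty$ and $P_0$ separately. Hence it has the form $(x,y)\mapsto(\alpha x,\alpha^{d_1}y)$ with $\alpha^{d_1}=\lambda\in\F_p^\times$, so $\alpha^{d_1(p-1)}=1$. The translation part $h$ is trivial up to Remark~\ref{rem:auto}, by the same $p$-power argument as in Proposition~\ref{prop:r=0easy}, applied separately to the polar parts at $\infty$ and $0$.

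\emph{Step 2: the group and its representation.} Substitute into \eqref{eq:r=1} and divide by $\lambda=\alpha^{d_1}$. The new coefficients are
\[ a_i' = \alpha^{i-d_1}a_i, \qquad b_i'=\alpha^{-i-d_1}b_i. \]
So the acting group is the cyclic group $\mu_{d_1(p-1)}$ of roots of unity $\alpha$. It acts diagonally on the polynomial ring $\Fpbar[a_1,\dots,a_{d_1-1},b_1,\dots,b_{d_2}]$, with the $a_i$ for $p\mid i$ set to zero. Its order $d_1(p-1)$ is prime to $p$ because $p\nmid d_1$, so we are in the non-modular case.

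\emph{Step 3: invariant monomials.} For a diagonal action of a cyclic group generated by a primitive root $\zeta$, the invariant ring is spanned by the invariant monomials. Each monomial is an eigenvector, and any invariant polynomial has all its monomials invariant. The monomial $\prod a_i^{k_i}\prod b_i^{n_i}$ is scaled by
\[ \alpha^{\sum k_i(i-d_1)-\sum n_i(i+d_1)}. \]
It is therefore invariant exactly when this exponent is divisible by $d_1(p-1)$, which is the stated congruence.

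\emph{Step 4: degree bound.} Lemma~\ref{lem:bound} applies since the group order is prime to $p$. It shows that invariants of degree at most $|G|=d_1(p-1)$ generate, and a set of generators can be taken among invariant monomials. This gives the bound $\sum k_i+\sum n_i\le d_1(p-1)$. Alternatively, one can argue directly: any invariant monomial of degree greater than $d_1(p-1)$ has a nonempty invariant proper submonomial, by the zero-sum (Davenport constant) property of $\mathbb Z/d_1(p-1)$.

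The only genuinely delicate point is Step 1, namely justifying that no isomorphism swaps or moves the two poles and that $h$ is trivial. Swapping is excluded because $d_1\ne d_2$. Moving $P_\infty$ or $P_0$ by a M\"obius map would destroy the standard form, so $M(x)=\alpha x$. The no-$p$-power-monomial condition then forces $h\in\F_p$ by the recursion used in Proposition~\ref{prop:r=0easy}, applied separately in $x$ and $1/x$.
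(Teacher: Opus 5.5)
Your proposal is correct and follows the paper's proof: the same diagonal action $a_i\mapsto\alpha^{i-d_1}a_i$, $b_i\mapsto\alpha^{-i-d_1}b_i$ of the cyclic group of order $d_1(p-1)$, the congruence characterizing invariant monomials, and the degree bound from Lemma~\ref{lem:bound} using $p\nmid d_1$. Your extra details are sound. These are the justification that only maps $(x,y)\mapsto(\alpha x,\alpha^{d_1}y)$ preserve the standard form (which the paper only asserts in the preceding discussion), the argument that invariants are spanned by invariant monomials, and the Davenport-constant alternative to the Noether bound.
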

\begin{proof} A change of variables $(x,y)\mapsto(\alpha x,\alpha^{d_1} y)$ sends the coefficients $a_i$ of $F(x)$ to $\alpha^{i-d_1}a_i$ and the coefficients $b_i$ of $G(x)$ to $\alpha^{-i-d_1}b_i$. The bound comes from Lemma~\ref{lem:bound}, noting that $p\nmid d_1$, and the congruence modulo $d_1(p-1)$ guarantees that the elements are invariant.
\end{proof}

\begin{example}[$r=1$, $\vec{d} = \{ 2, 1 \}$, $p =3$, $g =3$] \label{ex:r=1distinct}
    Applying Theorem~\ref{T:normal}, the standard form of a curve with these parameters is 
    $$y^3-y=x^2+ax+\frac{b}{x}.$$
    Proposition~\ref{prop:r=1diff} implies that the invariants $I_1=ab,\, I_2=a^4,$ and $I_3=b^4$ generate the invariant ring.  This agrees with \cite[Corollary~4.6]{win6}.
\end{example}

When $d_1=d_2 =: d$, there is a second isomorphism that swaps the poles $P_\infty$ and $P_0$, given by 
$ (x,y) \mapsto \left (\frac{1}{\alpha x}, \lambda y \right) $
where $\alpha \in \Fpbar$, $\lambda \in \F_p^\times$ and $\alpha^d b_d = \lambda$. This second isomorphism acts on the coefficients as $(a_i,b_i)\mapsto\left(\frac{\alpha^i}{\lambda}b_i,\frac{1}{\lambda \alpha^i}a_i\right)$ where $a_d=1$. In this case, we reason analogous to Proposition~\ref{prop:r=1diff} and obtain the following. 

\begin{proposition}\label{prop:r=1equal}\
Suppose $d_1 = d_2 =:d$. With the notation of \eqref{eq:notation}, the ring of invariants of the curve \eqref{eq:r=1} is generated by
\begin{align*}
\Big \{ b_d^{(p-1)/2}, a_1^{k_1}\dots a_{d-1}^{k_{d-1}}b_1^{n_1}\dots b_{d}^{n_{d}} &+ a_1^{n_1}\dots a_{d-1}^{n_{d-1}}b_1^{k_1}\dots b_{d-1}^{k_{d-1}}b_d^{n_d+\frac{1}{d}\sum_{i=1}^{d-1}i(n_i-k_i)} : \\ n_d< (p-1)/2,\; & \sum_{i=1}^{d-1}i(n_i-k_i)\geq0,
\\\sum_{i=1}^{d-1} k_i + \sum_{i=1}^{d-1} n_i\leq d(p-1),\; &\sum_{i=1}^{d-1} k_i(i-d)-\sum_{i=1}^{d} n_i(i+d)\equiv0 \!\! \pmod{d(p-1)}
\Big \}.
\end{align*}
\end{proposition}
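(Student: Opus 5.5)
We mirror the proof of Proposition~\ref{prop:r=1diff}, applying Proposition~\ref{prop:subgroup} to the finite group $G$ of standard-form preserving isomorphisms. Here $G$ contains the normal subgroup $N$ of maps $(x,y)\mapsto(\alpha x,\alpha^{d}y)$ with $\alpha^{d(p-1)}=1$, which fix $P_\infty$. By the discussion preceding the statement, $G/N$ has order $2$ and is generated by the class of the swap $\sigma:(x,y)\mapsto(1/(\alpha x),\lambda y)$ with $\alpha^d b_d=\lambda$. We therefore first compute $R^N$ and then take invariants of $R^N$ under the induced involution. Throughout, $a_d=1$ is fixed while $b_d$ is a genuine coordinate of $R$.

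\emph{Step 1: the ring $R^N$.} On the coefficients, $N$ acts diagonally by $a_i\mapsto \alpha^{i-d}a_i$ and $b_i\mapsto\alpha^{-i-d}b_i$; in particular $b_d\mapsto \alpha^{-2d}b_d$. Since $\alpha^d=\lambda\in\F_p^\times$, $b_d^{(p-1)/2}$ is $N$-invariant. As in Proposition~\ref{prop:r=1diff}, $R^N$ is spanned by the monomials $a^{k}b^{n}$ satisfying the weight congruence modulo $d(p-1)$ shown in the statement, and by Lemma~\ref{lem:bound} it is generated in degree at most $|N|=d(p-1)$. Any monomial containing $b_d^{n_d}$ with $n_d\ge (p-1)/2$ factors as $b_d^{(p-1)/2}$ times a smaller monomial that still satisfies the congruence. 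Hence it suffices to take $b_d^{(p-1)/2}$ together with the monomials having $n_d<(p-1)/2$.

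\emph{Step 2: the involution.} Next we compute the action of $\sigma$ on $R^N$, normalized so that the image curve has $a_d=1$. Then $\sigma$ sends $a_i\mapsto \alpha^i\lambda^{-1}b_i$ and $b_i\mapsto \lambda^{-1}\alpha^{-i}a_i$, and $b_d\mapsto \lambda^{-2}b_d$ up to the relation $\alpha^d b_d=\lambda$. Substituting into $m=a^{k}b^{n}$, we eliminate $\alpha$ using $\alpha^{d}=\lambda b_d^{-1}$. The factor $\alpha^{\sum i(n_i-k_i)}$ becomes a power of $b_d$, with exponent $\frac1d\sum i(n_i-k_i)$ up to sign, and the $\lambda$-powers cancel because of the congruence together with $\lambda^{p-1}=1$. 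This yields
\[
\sigma(m)=a_1^{n_1}\cdots a_{d-1}^{n_{d-1}}\,b_1^{k_1}\cdots b_{d-1}^{k_{d-1}}\,b_d^{\,n_d+\frac1d\sum_{i}i(n_i-k_i)},
\]
possibly after multiplying by a power of the invariant $b_d^{(p-1)/2}$. Because $G/N$ has order $2$ and $p$ is odd, the Reynolds operator $\frac12(m+\sigma(m))$ is available. The invariant ring $R^G=(R^N)^{G/N}$ is therefore spanned by the orbit sums $m+\sigma(m)$ of $N$-invariant monomials, together with $b_d^{(p-1)/2}$.

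\emph{Step 3: pruning.} Each orbit $\{m,\sigma(m)\}$ has a representative with $\sum i(n_i-k_i)\ge0$, since $\sigma$ reverses the sign of this quantity. Choosing that representative ensures the $b_d$-exponent in $\sigma(m)$ is a nonnegative integer, so the orbit sum is a genuine polynomial. The degree bound $d(p-1)$ on the $a_i,b_i$ ($i<d$) carries over from the monomial generators of $R^N$.

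\textbf{Main obstacle.} The delicate point is justifying generation after symmetrization. Sums $m+\sigma(m)$ of generators of $R^N$ need not generate $(R^N)^{G/N}$ as an algebra, because products of orbit sums are not orbit sums of products. The plan is to use that $2$ is invertible: apply the Reynolds operator to a product of generators $m_1m_2$, and write $\frac12(m_1m_2+\sigma(m_1)\sigma(m_2))$ as a polynomial in the orbit sums $s_j=m_j+\sigma(m_j)$ and products of the anti-invariant differences $t_j=m_j-\sigma(m_j)$. One then shows that the products $t_jt_k$ lie in the algebra generated by the $s$'s, for instance because $t_jt_k$ is itself an orbit sum of lower complexity. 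Failing that, we would instead appeal to Lemma~\ref{lem:bound} for the whole group $G$, of order $2d(p-1)$, and check that every orbit sum up to that degree lies in the algebra generated by the listed elements.

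Verifying the exact $b_d$-exponent and the cancellation of the signs of $\lambda$ in Step 2 is also error-prone, and we would check it on Example-sized cases using the \MAGMA\ implementation.
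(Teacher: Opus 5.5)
\textbf{Gap.} Your Steps 1--3 are exactly the paper's route; its whole argument is the remark that one should ``reason analogous to Proposition~\ref{prop:r=1diff}''. The step you flag as the main obstacle is a genuine gap, and it cannot be closed, because the listed set does not generate the invariant ring in general.

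The reason is that $\sigma$ does not act on $R^N$. It sends an $N$-invariant monomial $m=a^kb^nb_d^{n_d}$ to the Laurent monomial $a^nb^kb_d^{\,n_d+\ell}$, where $\ell=\frac1d\sum_{i<d}i(n_i-k_i)$. So $R^G=\Fpbar[M']^{\sigma}$, where $M'$ is the monoid of $N$-invariant monomials with $n_d+\ell\ge 0$. Cutting the monoid of $N$-invariant monomials by this half-space creates new indecomposables, whose degrees are not controlled by $|N|=d(p-1)$.

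Concretely, take $d=4$, $p=5$ (so $|N|=16$) and $m=a_3^8b_2^{12}$.
\begin{itemize}
\item $m$ is $N$-invariant, since $-8-72=-80\equiv0\pmod{16}$. It has $\ell=0$ and $\sigma(m)=a_2^{12}b_3^8$.
\item Hence $I=a_3^8b_2^{12}+a_2^{12}b_3^8\in R^G$, but $\sum k_i+\sum n_i=20>16$.
\item Among $N$-invariant monomials, $m$ does factor, e.g.\ as $(a_3^2b_2^5)(a_3^6b_2^7)$. But $\sigma(a_3^6b_2^7)=a_2^7b_3^6b_4^{-1}$ is not a polynomial, so this factorization leaves $M'$.
\item Inside $M'$ it does not factor. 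A factorization $m=m'm''$ in $M'$ forces $\ell(m')=-\ell(m'')\ge 0$, hence $\ell(m')=0$ and $m'=a_3^{2j}b_2^{3j}$. Then $N$-invariance requires $16\mid 20j$, which leaves only $j\in\{0,4\}$.
\end{itemize}
Every monomial in a listed generator lies in $M'$, and either has $\sum k_i+\sum n_i\le 16$ or is a power of $b_4$. So the monomial $m$ has coefficient $0$ in every polynomial in the listed generators, and $I$ is not in the algebra they generate.

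Conceptually, after setting $k_d=n_d+\ell$, $M'$ is the monoid of monomials in $a_1,\dots,a_d,b_1,\dots,b_d$ invariant under the torus $x\mapsto\alpha x$ and under $\lambda$. Here $m=(a_3^2b_2^3)^{4}$ is the first $\lambda$-invariant power of the torus-invariant monomial $a_3^2b_2^3$.

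\textbf{Your fallbacks do not repair this.}
\begin{itemize}
\item Write $s(m)=m+\sigma(m)$ and $t_j=m_j-\sigma(m_j)$. Then $t_jt_k=s(m_jm_k)-s(m_j\sigma(m_k))$ has the same degree as the orbit sum you want to reach, so that reduction is circular.
\item Lemma~\ref{lem:bound} cannot be applied to $G$. The swap is not a linear, or even polynomial, action on $\Fpbar[a_i,b_i]$, since it needs a $d$-th root of $\lambda/b_d$.
\item Even a bound of $2d(p-1)=32$ would not help, because the check you propose fails at $I$.
\end{itemize}

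\textbf{What a correct argument gives.} $M'$ is a finitely generated positive monoid stable under $\sigma$, so its Hilbert basis $H$ is $\sigma$-stable. Since $2$ is invertible, $\Fpbar[M']^{\sigma}$ is the image of the invariants of the permutation action on $\Fpbar[X_h:h\in H]$. By Lemma~\ref{lem:bound} for a group of order $2$, it is therefore generated by the orbit sums $s(h)$ and $s(hh')$ for $h,h'\in H$. One can check that the stated set does generate in the tabulated cases $d=2$, $p\in\{3,5\}$, but, as shown above, not in general.
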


\begin{example}[$r=1,\, d=2,\, p=5,\,g=8$]  In this case, the standard form \eqref{eq:r=1} is 
     $$y^5-y = x^2 + ax + \frac{b_1}{x} + \frac{b_2}{x^2}.$$
 Proposition~\ref{prop:r=1equal} produces a list of 34 invariants that generate the invariant ring.  We use the \MAGMA~intrinsic \texttt{MinimalAlgebraGenerators()} to find the following minimal set of 34 generators.
 \begin{eqnarray*}
     b_2,\,
     ab_1,\,
     a^4b_2^2 + b_1^4,\,
     a^4b_2^3 + b_1^4,\,
     a^5b_1b_2^2 + ab_1^5,\,
     a^4b_2^4 + b_1^4,\,
     a^2b_1^2b_2^4 + a^2b_1^2,\,\\
     a^5b_1b_2^3 + ab_1^5,\,
     a^4b_2^5 + b_1^4,\,
     a^6b_1^2b_2^2 + a^2b_1^6,\,
     a^5b_1b_2^4 + ab_1^5,\,
     a^4b_2^6 + b_1^4,\,
     a^8b_2^4 + b_1^8
 \end{eqnarray*}
 \end{example}

Once again, we introduce a new standard form that is more amenable to invariant computation. Here, we simply send the two distinguished poles to $P_\infty$ and $P_0$ and make the polynomial part monic.

\begin{proposition} \label{prop:2pole}
    Consider an Artin-Schreier curve as given in \eqref{eqn:ASgeneralform} that has two poles of respective orders $d_1 \ge d_2$ such that all the other poles $\theta_3, \ldots , \theta_{r+1}$ (if any) are distinct from $P_\infty, P_0$ and have pole orders distinct from $d_1$ and $d_2$. Then $C_f$ is isomorphic to a curve of the form \begin{equation}\label{eq:11gen}y^p-y=x^{d_1}+P(x)+\frac{Q(x)}{x^{d_2}}+\sum_{i=3}^{r+1}\frac{R_i(x-\theta_i)}{(x-\theta_i)^{d_i}}\end{equation} 
    where $P(x)=\sum_{i=1}^{d_1-1}a_ix^i$,$Q(x)=\sum_{i=1}^{d_2}b_ix^{d_2-i}$ and $R_i(x)=\sum_{j=0}^{d_i-1}c_{ij}x^j,$
    with $a_i=b_i=0$ if $p\mid i$ for all $i$, and $c_{ij}=0$ if $p\mid j-d_i$, $c_{i,0}\neq0$ for all $i,j$.
\end{proposition}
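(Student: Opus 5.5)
First apply a M\"obius transformation $M$ that sends the pole of order $d_1$ to $P_\infty$ and the pole of order $d_2$ to $P_0$. Such an $M$ exists because $\operatorname{PGL}_2(\Fpbar)$ acts $2$-transitively on $\mathbb{P}^1(\Fpbar)$. Pulling back along the corresponding isomorphism \eqref{eq:ASisom}, the curve becomes $y^p - y = f_1(x)$ with poles of orders $d_1$ at $P_\infty$, $d_2$ at $P_0$, and $d_i$ at the finite points $\theta_i \neq 0$ for $3 \le i \le r+1$. The poles stay distinct because $M$ is a bijection on $\mathbb{P}^1$.

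Next, take the partial fraction decomposition of $f_1$ over $\Fpbar$:
\[ f_1(x) = \tilde P(x) + \tilde Q(1/x) + \sum_{i\ge 3} \tilde R_i\bigl(1/(x-\theta_i)\bigr), \]
where $\tilde P$ has degree $d_1$ and carries the constant term, $\tilde Q$ has degree $d_2$ and no constant term, and each $\tilde R_i$ has degree $d_i$ and no constant term.

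The central step removes the monomials with exponent divisible by $p$. This follows the argument of \cite[Lemma~3.7.7]{stichtenoth2009algebraic} used for Theorem~\ref{T:normal}. Proceed by descending induction on the exponent, and treat each pole separately, since the principal parts at different poles do not interact.
\begin{itemize}
\item If the monomial $c\,x^{pk}$ occurs in $\tilde P$, replace $y$ by $y + c^{1/p}x^k$. This subtracts $c\,x^{pk} - c^{1/p}x^k$, which kills that monomial and only changes lower degrees.
\item Treat $\tilde Q$ in the same way using $(c^{1/p}x^{-k})$.
\item Treat $\tilde R_i$ in the same way using $(c^{1/p}(x-\theta_i)^{-k})$.
\item The constant term of $\tilde P$ is removed using $h_0$ with $h_0^p - h_0 = c$.
\end{itemize}
This does not change the leading pole orders, since $p \nmid d_i$ as assumed in \S\ref{sec:AScurves}.

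To make the polynomial part monic, scale $x \mapsto \alpha x$ with $\alpha^{d_1}$ equal to the inverse of the leading coefficient. This fixes $P_\infty$ and $P_0$, moves $\theta_i$ to $\theta_i/\alpha$, and preserves the no-$p$-power conditions, since each monomial is only rescaled. The result is $x^{d_1} + P(x)$ with $P(x) = \sum_{i<d_1} a_i x^i$.

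Finally, rewrite the pieces in the form required by \eqref{eq:11gen}:
\begin{itemize}
\item $\tilde Q(1/x) = Q(x)/x^{d_2}$ with $Q(x) = \sum_{i=1}^{d_2} b_i x^{d_2-i}$, where $b_i$ is the coefficient of $x^{-i}$.
\item $\tilde R_i(1/u) = R_i(u)/u^{d_i}$ with $u = x - \theta_i$ and $R_i(u) = \sum_j c_{ij}u^j$. Here $c_{ij}$ is the coefficient of $u^{j-d_i}$, so the vanishing condition becomes $c_{ij} = 0$ when $p \mid j - d_i$.
\item $c_{i,0} \ne 0$ because the pole at $\theta_i$ has exact order $d_i$.
\end{itemize}

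The main obstacle is bookkeeping rather than substance. One must check that the $p$-power eliminations at one pole do not reintroduce $p$-power terms at another. This holds because each correction $h$ has poles only at that single point, and the induction decreases the exponent at that pole. The hypothesis that the other pole orders differ from $d_1$ and $d_2$ is not needed for existence. I would note that it only matters later, for rigidity of the form.
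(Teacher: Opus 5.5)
Your proof is correct and follows essentially the same route as the paper. The paper gives no formal proof, only the remark that one sends the two distinguished poles to $P_\infty$ and $P_0$ and makes the polynomial part monic, with the removal of $p$-power monomials inherited from Theorem~\ref{T:normal} and \cite[Lemma~3.7.7]{stichtenoth2009algebraic}; your write-up supplies exactly these details, and your observation that the hypothesis on the other pole orders is needed only for the rigidity statement in Theorem~\ref{thm:11+}, not for existence, is accurate.
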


Note that the polynomials and their coefficients appearing in Proposition~\ref{prop:2pole} are different from those defined in Theorem~\ref{T:normal} and \eqref{eq:notation}. However, if $\theta_3 = 1$, then they are related via $F(x) = a_{d_1}(x^{d_1} + P(x))$ and $Q(x) = a_{d_1}x^{d_2}G(1/x)$.

Just as for previous cases, the curve model given in \eqref{eq:11gen} is not unique. In addition to the isomorphisms described in \eqref{eq:ASisom}, if two poles $P_{\theta_i}, P_{\theta_j}$ have equal order, then swapping their indices does not change \eqref{eq:11gen}. This ambiguity is resolved by Proposition~\ref{prop:subgroup}. We first consider the action of $\mathcal{S}$, the product of permutations groups acting on the labeling of poles $\theta_i$ of equal order, before applying the isomorphisms described in \eqref{eq:ASisom}. 

The group $\mathcal{S}$ is a finite group that acts linearly on the coefficients of the polynomials $R_i$ and on the poles $\theta_i$. The invariant ring $\Fpbar[c_{ij},\theta_i]^\mathcal{S}$ can be computed for example with Corollary~\ref{multisymOrbits}. Write $\Fpbar[c_{ij},\theta_i]^\mathcal{S}=\Fpbar[e_i]$.  
 
\begin{theorem}\label{thm:11+} If $d_1\neq d_2$, then every isomorphism preserving the standard form given in \eqref{eq:11gen} is of the form $(x,y)\mapsto(\alpha x,\alpha^{d_1} y)$, with $\alpha^{d_1}\in\mathbb{F}_p^\times$, so $\alpha^{d_1(p-1)}=1$. 
These isomorphisms form a group that is isomorphic to $G=\mathbb{F}_p^\times$ acting linearly on $\Fpbar[a_i,b_i,e_i]$. The invariant ring $\Fpbar[a_i,b_i,c_{ij},\theta_i]^{G\times\mathcal{S}}=\Fpbar[a_i,b_i,e_i]^G$ can 
be computed using \cite[Algorithm 2.7.3]{sturmfelsIT}. 
\end{theorem}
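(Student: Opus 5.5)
The first step is to determine which Möbius transformations can preserve the form \eqref{eq:11gen}. An isomorphism \eqref{eq:ASisom} carries a pole of order $d$ to a pole of the same order. The hypotheses say $d_1\neq d_2$ and that the orders $d_i$ for $i\ge 3$ differ from both $d_1$ and $d_2$. So $P_\infty$ is the unique pole of order $d_1$ and $P_0$ is the unique pole of order $d_2$. Hence $M$ must fix $\infty$ and $0$, which gives $\gamma=0$ and $\beta=0$, and after normalizing $\delta=1$ we get $M(x)=\alpha x$.

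Next we compare leading terms. Substituting $x\mapsto\alpha x$ turns the monic term $x^{d_1}$ into $\alpha^{d_1}x^{d_1}$. Keeping the form monic forces $\lambda=\alpha^{d_1}$ with $\lambda\in\F_p^\times$, so $\alpha^{d_1(p-1)}=1$.

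It remains to show $h$ is constant, following the argument in the proof of Proposition~\ref{prop:r=0easy}. The term $h(x)^p-h(x)$ must absorb only $p$-power monomials, but no such monomials appear on either side. Here we must check that the pole parts $R_i(\alpha x-\theta_i)/(\alpha x-\theta_i)^{d_i}$ are simply the same shape with $\theta_i$ replaced by $\theta_i/\alpha$ and rescaled coefficients. The normalization $c_{ij}=0$ for $p\mid j-d_i$ is preserved under this substitution, since the exponents of $(x-\theta_i)$ are unchanged.

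The main obstacle is $h(x)$ itself. A priori $h$ is a rational function and could have poles at $0$ and at the $\theta_i$. We handle this by a local expansion at each pole, as in \cite[Lemma~3.4]{win6} and Remark~\ref{rem:auto}. The condition that no $p$-power principal-part monomials occur forces the principal parts of $h^p-h$ to vanish, and the recursive $p$-power argument of Proposition~\ref{prop:r=0easy} then kills every nonconstant coefficient. Thus $h\in\F_p$, which we discard.

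For the group structure, composition gives $(\alpha x,\alpha^{d_1}y)\circ(\alpha' x,\alpha'^{d_1}y)=(\alpha\alpha' x,(\alpha\alpha')^{d_1}y)$, so these maps form the cyclic group $\mu_{d_1(p-1)}$ of roots of unity of order $d_1(p-1)$. The action on coefficients is diagonal:
\begin{itemize}
\item $a_i\mapsto\alpha^{i-d_1}a_i$ and $b_i\mapsto\alpha^{-i-d_1}b_i$;
\item $\theta_i\mapsto\alpha^{-1}\theta_i$;
\item $c_{ij}\mapsto\alpha^{\,j-d_i-d_1}c_{ij}$, up to the exact sign conventions.
\end{itemize}
The statement identifies this group with $G=\F_p^\times$; I read that as shorthand for a finite cyclic group of order prime to $p$ acting diagonally, and linearity is what the argument actually uses. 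Its order is prime to $p$ because $p\nmid d_1$.

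This diagonal action commutes with the relabeling group $\mathcal{S}$, since $\mathcal{S}$ only permutes indices among poles of equal order, and all those poles receive the same scaling. Consequently $\mathcal{S}$ is normal in $G\times\mathcal{S}$, and Proposition~\ref{prop:subgroup} gives
\[
\Fpbar[a_i,b_i,c_{ij},\theta_i]^{G\times\mathcal{S}}=\bigl(\Fpbar[a_i,b_i,c_{ij},\theta_i]^{\mathcal{S}}\bigr)^G=\Fpbar[a_i,b_i,e_i]^G .
\]

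Finally, we check that $G$ acts linearly on the $e_i$. The generators $e_i$ of Theorem~\ref{thm:multisym} are multisymmetric polynomials built from monomials. The action of $G$ on each variable is scaling by a power of $\alpha$ that is uniform within each $\mathcal{S}$-orbit, so each $e_i$ is multiplied by a power of $\alpha$ and the action on the $e_i$ is diagonal. For a diagonal action of a finite abelian group of order coprime to $p$, the invariant ring is generated by the invariant monomials, which \cite[Algorithm~2.7.3]{sturmfelsIT} computes.
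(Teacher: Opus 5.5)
Your proof is correct and follows the route the paper indicates for this statement, which it does not prove in detail. The two distinguished poles are fixed by pole-order considerations, so $M(x)=\alpha x$; leading coefficients force $\lambda=\alpha^{d_1}$; the $p$-power recursion of Proposition~\ref{prop:r=0easy}, which you correctly extend to possible finite poles of $h$, kills $h$; and Proposition~\ref{prop:subgroup} reduces to the $\mathcal{S}$-invariants $e_i$, on which the scaling acts diagonally. Your reading of $G$ as the cyclic group $\mu_{d_1(p-1)}$ of order prime to $p$, rather than literally $\F_p^\times$, is the right one, and it matches the modulus $d_1(p-1)$ the paper itself uses in Proposition~\ref{prop:r=1diff}.
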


\begin{remark} The action of $G$ is also defined on the subring $\Fpbar[a_i,b_i]$ and $K[a_i,b_i]^G$ is explicitly described in Proposition~\ref{prop:r=1diff}. 
\end{remark}

\begin{remark}
    If $d_1=d_2$, these ideas still apply, but the work becomes much more complicated, as illustrated in the following example.
\end{remark}

\begin{example}[{$r=3,\,\vec{d}=\{2,2,1,1\},\,p=3,\,g=8$}]\label{eg:2,2,1,1}
Following the discussion before Theorem~\ref{thm:11+}, we directly use the standard form
 $$y^3-y= x^2 + a_1x+\frac{b_1}{x} +\frac{b_2}{x^2}+\frac{e_1x+e_2}{x^2+e_3x+e_4}.$$
The acting group has order $4$ and is generated by  $\phi_{M,\lambda}$ with $(M,\lambda)=((\begin{smallmatrix}0 & 1\\ 1 & 0\end{smallmatrix}), 1)$ and $(\operatorname{Id},-1)$ with the notation of Remark~\ref{rem:auto}.
We apply Proposition~\ref{prop:subgroup} to the subgroup generated by $(M,\lambda)=(\operatorname{Id},-1)$. Since $p \ne 2$ (the non-modular case), Lemma~\ref{lem:bound} applies. Hence, the ring $\mathcal{R} \colonequals \overline{\F}_3[a_1,b_1,b_2,e_1,...,e_4]^{\langle((\operatorname{Id},-1))\rangle}$ can be presented as
\begin{equation}\label{eq:invLambda}
    \mathcal{R}=\overline{\F}_3[a_1^2,\,
    a_1b_1,\,
    a_1e_1,\,
    a_1e_3,\,
    b_1^2,\,
    b_1e_1,\,
    b_1e_3,\,
    b_2,\,
    e_1^2,\,
    e_1e_3,\,
    e_2,\,
    e_3^2,\,
    e_4].
\end{equation}
Applying the action of $(M,\lambda)=((\begin{smallmatrix}0 & 1\\ 1 & 0\end{smallmatrix}), 1)$ to the generators of $\mathcal{R}$ yields
\begin{align*}
&[a_1^2,\,
    a_1b_1,\,
    a_1e_1,\,
    a_1e_3,\,
    b_1^2,\,
    b_1e_1,\,
    b_1e_3,\,
    b_2,\,
    e_1^2,\,
    e_1e_3,\,
    e_2,\,
    e_3^2,\,
    e_4]\mapsto\\
&    [b_1^2/b_2,\,
    a_1b_1/b_2,\,
    b_1(e_1e_4-e_2e_3)/e_4^2,\,
    b_1e_3/e_4,\,
    a_1^2b_2,\,
    a_1(e_1e_4-e_2e_3)/e_4^2,\,
    a_1b_2e_3/e_4,\,
    \\
&   b_2,\,b_2(e_1e_4-e_2e_3)^2/e_4^4,\,
    b_2(e_1e_4-e_2e_3)e_3/e_4^3,\,
    -b_2e_2/e_4^2,\,
    b_2e_3^2/e_4^2,\,
    b_2/e_4].
\end{align*}
The group order is 2, so the invariants are generated by the multisymmetric functions on the generators of $\mathcal{R}$
up to degree 2. For the sake of brevity, we do not display them. 
\end{example}

\section{Three Poles}\label{sec:3poles}
Let $C_f$ be an Artin-Schreier curve defined over $\Fpbar$ where $f(x)$ has exactly three poles of order $d_1\geq d_2 \geq d_3$.  By Theorem~\ref{T:normal}, $C_f$ is isomorphic to a standard form curve $C_g:y^p-y=g(x)$, where \begin{equation}\label{eq:3pole}
    g(x)=F(x)+G\left(\frac{1}{x}\right)+H\left(\frac{1}{x-1}\right),
\end{equation} where $F$, $G$, and $H$ are polynomials with degree $d_1$, $d_2$, and $d_3$, respectively, and no monomial appearing in $F(x)$, $G(x)$, or $H(x)$ has an exponent that is divisible by $p$. This case is in some sense the simplest because the M\"obius transformation $M(x)$ in any isomorphism $\phi = \phi_{M,\lambda}$ as in \eqref{eq:ASisom} between standard forms is fully determined by the permutation of the three poles  $P_{\infty}$, $P_0$, and $P_1$, so 
$M(x)\in\langle\frac{1}{x}, 1-x\rangle$, corresponding to the subgroup generated by two transpositions  $(P_0 \ P_{\infty})$ and $(P_0 \ P_1)$ of the symmetric group of permutations of the poles. 

\begin{theorem}
    Any isomorphism preserving the standard form \eqref{eq:3pole}
    acts linearly on the set of coefficients of $F$, $G$, and $H$.
\end{theorem}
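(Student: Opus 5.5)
The plan is to make the action completely explicit, one Möbius transformation at a time. As noted before the statement, an isomorphism $\phi_{M,\lambda}$ between two standard forms \eqref{eq:3pole} must permute $\{P_\infty,P_0,P_1\}$. So $M$ lies in the group $\langle 1/x,\,1-x\rangle\cong\mathfrak S_3$ of six Möbius maps, and $\lambda\in\F_p^\times$. By Remark~\ref{rem:auto}, $h(x)$ is determined by $M$ up to an element of $\F_p$. Since $\phi_{M,\lambda}$ is the composition of $\phi_{M,1}$ with the scaling $y\mapsto\lambda y$, and the scaling sends every coefficient $a_i,b_i,c_i$ to $\lambda^{-1}$ times itself, it suffices to treat the generators $M(x)=1/x$ and $M(x)=1-x$ with $\lambda=1$. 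Compositions of linear maps are linear, so the general case follows.

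For each generator I would rewrite $g(M(x))$ in the shape $F'(x)+G'(1/x)+H'(1/(x-1))+\text{const}$. For $M(x)=1-x$, one has $F(1-x)=F(-(x-1))$, $G(1/(1-x))=G(-1/(x-1))$ and $H(1/(1-x)-1)=H(x/(1-x))$. For $M(x)=1/x$, one has $F(1/x)$, $G(x)$ and $H(x/(1-x))$. In each case two of the three summands simply trade places, with signs $(-1)^i$ on the coefficients, which is visibly linear. The only summand needing work is the one involving the Möbius-shifted argument $x/(1-x)=-1-\frac{1}{x-1}$, or $\frac{1}{x}\mapsto$ its analogue for the other generator. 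Write $u$ for the local parameter at the target pole, so that this summand becomes $H(-1-u)=\sum_i c_i(-1)^i(1+u)^i$. Expanding by the binomial theorem, the coefficient of $u^j$ is $\sum_i(-1)^i\binom{i}{j}c_i$. This is an $\F_p$-linear combination of the $c_i$, and the constant term is likewise linear. Hence, before normalization, every new coefficient is an $\F_p$-linear form in the old ones, with binomial coefficients reduced mod $p$.

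The last step is to restore the standard form, that is, to remove the monomials $u^{pk}$ via $y\mapsto y+h(x)$. This is where I expect the main obstacle to lie. By Lucas' theorem, $\binom{i}{pk}$ need not vanish when $p\nmid i$; for instance $\binom{p+1}{p}=1$. So $p$-power monomials can genuinely reappear, exactly as in the $\beta$-shift of Section~\ref{sec:r=0d=1modp}. Removing a term $\ell\,u^{pk}$, where $\ell$ is a linear form, replaces it by $\ell^{1/p}u^{k}$ through $h=\ell^{1/p}u^k$. This is additive but only Frobenius-semilinear. It may also cascade, since $k$ itself may be divisible by $p$.

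I would first try to show that, in the range of exponents that actually occurs, the offending linear forms vanish identically. The exponents here are bounded by $d_3$, and the coefficients $c_i$ have $p\nmid i$. I would check this by a Lucas-theorem count of the pairs $(i,pk)$ with $i\le d_3$. If that fails in general, I would instead prove the statement in the form actually needed for the invariant computation. Namely, I would treat $\ell\mapsto \ell^{1/p}$ as the linear map it induces after replacing each affected coordinate $c$ by its Frobenius twist. Equivalently, I would note that the map remains additive and $\F_p$-linear on the coefficient vector. With that in hand, the \MAGMA{} matrix representation of the $\mathfrak S_3\times\F_p^\times$ action described in Section~\ref{subsec:methods} can be written down.
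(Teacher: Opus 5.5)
Your first two paragraphs are essentially the paper's whole proof. You reduce to $\lambda=1$ and the generators $M(x)=1/x$ and $M(x)=1-x$. You note that two summands are exchanged up to signs. The remaining summand has new coefficients that are $\F_p$-linear combinations of the old ones: this is $H(-1-\frac{1}{x-1})$ for $1/x$, and $F(1-x)$ for $1-x$. Note that $F(1-x)$ must be expanded in powers of $x$, not of $x-1$ as your ``$F(-(x-1))$'' suggests. The paper stops there and never addresses renormalization. You are right that this is where the argument breaks, but your plan (a) cannot succeed, because the offending linear forms do not vanish.

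Take $p=3$, $\vec d=\{4,1,1\}$ (a case in Table~\ref{tab:smallgenustable}) and $M(x)=1-x$. Since $(1-x)^4\equiv 1-x-x^3+x^4 \pmod 3$,
\[
F(1-x)=a_4x^4-a_4x^3+a_2x^2+(a_2-a_1-a_4)x+\mathrm{const},\qquad a_4\neq 0 .
\]
Under $x\mapsto 1-x$, $y\mapsto y+h(x)$ the new model is $y^3-y=g(1-x)-h(x)^3+h(x)$. Pole orders force $h=\kappa x+h_0$, and killing the $x^3$-term forces $\kappa=-a_4^{1/3}$. Hence
\[
a_1'=a_2-a_1-a_4-a_4^{1/3},
\]
which is not linear, not even polynomial, in the coefficients.

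Your plan (b) does not rescue this either. With $t=a_4^{1/3}$ the formula reads $a_1'=a_2-a_1-t^3-t$. The same coordinate enters both linearly and through its $p$-th root, so no choice of Frobenius-twisted coordinates makes the map linear. Also, ``additive and $\F_p$-linear'' is strictly weaker than the $\Fpbar$-linearity the statement asserts, which is what Lemma~\ref{lem:bound} and the \MAGMA{} routines for linear representations need. So your proposal does not prove the statement, and it cannot be completed, because the statement is false in this generality; the paper's own proof overlooks exactly the issue you flagged.

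What your first two paragraphs (and the paper's argument) do prove is the statement under an extra hypothesis: the shifted polynomial has degree less than $p$. That means $d_3<p$ if $d_1=d_2$ (so that $1/x$ is used) and $d_1<p$ if $d_2=d_3$ (so that $1-x$ is used). Then $\binom{i}{pk}=0$ for every exponent $i$ that occurs and every $k\ge 1$, only a constant $h$ is needed, and the action is linear.

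Once the degree $d$ of the shifted polynomial exceeds $p$, Lucas' theorem gives $\binom{d}{p\lfloor d/p\rfloor}\equiv 1\pmod p$. So the coefficient of $x^{p\lfloor d/p\rfloor}$ (resp.\ of $(x-1)^{-p\lfloor d/p\rfloor}$) is a nonzero linear form involving the leading coefficient. Renormalization then introduces $p$-th roots exactly as in the example. The missing ingredient is this hypothesis, not a cleverer argument.
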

\begin{proof}
We only need to consider the M\"obius tranformations $M(x) = 1/x$ and $M(x) = 1-x$, and may assume that $\lambda = 1$ as scaling by $\lambda$ is a linear action on all the coefficients in the right hand side of \eqref{eq:3pole}. 

The isomorphism $\phi_{1/x,1}$ preserves the standard form \eqref{eq:3pole} if and only if $d_1=d_2$. The action of $\phi_{1/x,1}$ on \eqref{eq:3pole}
simply exchanges the coefficients of $F$ and $G$, which constitutes a linear action. It also sends the coefficients of $H(1/(x-1))$ to the coefficients of $H'(1/(x-1)) = H(-1-1/(x-1))$. The coefficients of $H'$ are easily seen to be linear in the coefficients of $H$.  

Similarly, $\phi_{1-x,1}$ preserves \eqref{eq:3pole} if and only if $d_2=d_3$. Its action swaps the coefficients of $G$ and $H$ and sends $F(x)$ to $F(1-x)$, again a linear action.
\end{proof}

The above shows that the isomorphisms of standard forms \eqref{eq:3pole} of Artin-Schreier curves with exactly three poles form a finite group; more specifically, a group isomorphic to a subgroup of $\mathfrak{S}_3\times \F_p^\times$ acting linearly on the set of coefficients of $F$, $G$, and $H$.  Thus, the infinite group of isomorphisms of Artin-Schreier curves acting on arbitrary models \eqref{eqn:ASgeneralform} reduces to a finite subgroup acting linearly on the coefficients of curves in standard forms, a finite dimensional vector space $V$ over $\Fpbar$.  Thus, invariants of these curves can in principle be computed relatively simply by algorithms implemented in $\MAGMA$ as described in Section~\ref{sec:MainAlg}, although the computations may take a prohibitively long time if there are poles are of high order.  

\subsection{Three poles of different orders} 
If all three poles have different orders, the computation of invariants is trivial.  Since all poles must be fixed by any isomorphism of standard forms, the only action is by $\lambda\in\F_p^{\times}$, and each coefficient of $f(x)$ is simply scaled by $\frac{1}{\lambda}$.  Thus, two such curves are isomorphic exactly when the vectors of free coefficients of their standard forms are $\F_p^{\times}$-multiples of each other. 
Using the notation of \eqref{eq:notation}, we obtain the following generators for the invariant ring. 

\begin{proposition}
   If $d_1>d_2>d_3$, then the invariants of the curve \eqref{eq:3pole} are generated by elements of the form $a_{d_1}^{p-2}{a_i}$, $a_{d_1}^{p-2}{b_j}$, and $a_{d_1}^{p-2}{c_k}$, where $ 1\leq i\leq d_1, 1\leq j\leq d_2, 1\leq k\leq d_3$, with  $i,j,k\not\equiv0\pmod{p}.$
\end{proposition}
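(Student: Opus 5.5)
The plan is to identify the acting group explicitly, observe that it acts by a single scalar character, and then use the fact that $a_{d_1}\neq 0$ on our parameter space to normalize every invariant monomial against a power of $a_{d_1}$.

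\emph{Step 1: the acting group.} Since $d_1>d_2>d_3$, any isomorphism $\phi_{M,\lambda}$ between two standard forms \eqref{eq:3pole} must fix each of $P_\infty$, $P_0$, $P_1$, because they are the unique poles of their respective orders. By the discussion preceding the statement, $M(x)\in\langle 1/x,\,1-x\rangle$ is determined by the induced permutation of these three poles, so $M=\operatorname{Id}$. By Remark~\ref{rem:auto}, $h(x)$ is then determined up to an element of $\F_p$, which we discard, so $h=0$. Hence $\phi(x,y)=(x,\lambda y)$ with $\lambda\in\F_p^\times$. The new right-hand side is $\lambda^{-1}g(x)$, so every coordinate $a_i,b_j,c_k$ of $V$ is multiplied by the same scalar $\mu=\lambda^{-1}$, and $\mu$ ranges over the cyclic group $\F_p^\times$ of order $p-1$.

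\emph{Step 2: invariant monomials.} The action is diagonal and scalar, and it fixes each monomial up to the factor $\mu^{\deg}$. Therefore $\Fpbar[V]^{\F_p^\times}$ is spanned by the monomials whose total degree is divisible by $p-1$. Because $p\nmid p-1$, we are in the non-modular case, and Lemma~\ref{lem:bound} (or a direct check) shows this ring is generated by the monomials of degree exactly $p-1$.

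\emph{Step 3: reduction to the listed generators.} In the standard form of Theorem~\ref{T:normal} with $r\ge 2$, the polynomial $F$ is not required to be monic, but $a_{d_1}\neq 0$. So the relevant coordinate ring is the localization at $a_{d_1}$, and the invariant $a_{d_1}^{p-1}=a_{d_1}^{p-2}a_{d_1}$, which is itself on the list (take $i=d_1$), is a unit there. Let $m=x_1\cdots x_{p-1}$ be any degree-$(p-1)$ monomial in the coefficients. Then
\[
m=\frac{\prod_{j=1}^{p-1}\bigl(a_{d_1}^{p-2}x_j\bigr)}{\bigl(a_{d_1}^{p-1}\bigr)^{p-2}},
\]
which lies in the algebra generated by the elements $a_{d_1}^{p-2}a_i$, $a_{d_1}^{p-2}b_j$, $a_{d_1}^{p-2}c_k$ together with $(a_{d_1}^{p-1})^{-1}$. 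Conversely, each listed element has degree $p-1$ and is therefore invariant.

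As a consistency check, the listed invariants are reconstructing. The value of $a_{d_1}^{p-1}$ determines $a_{d_1}$ up to a factor in $\F_p^\times$. Once $a_{d_1}$ is fixed, each remaining coefficient is recovered by dividing $a_{d_1}^{p-2}x$ by $a_{d_1}^{p-2}$. This exhibits the orbit as the set of $\F_p^\times$-multiples, so the invariants separate orbits, in line with Theorem~\ref{thm:reconstruct} and Remark~\ref{genericallyreconstruction}.

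\emph{Main obstacle.} The main point of care is Step 3. Literally, in the polynomial ring a monomial such as $a_1^{p-1}$ is not a polynomial in the listed elements. The statement must therefore be read on the open set $a_{d_1}\neq 0$, that is, in the ring localized at the unit $a_{d_1}^{p-1}$, or equivalently as generation of the invariant field. I would make this localization explicit in the proof. The remaining steps are routine.
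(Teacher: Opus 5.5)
Your proof is correct, but it takes a different route from the paper's.

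\textbf{The paper's route.} The paper argues by counting. The listed elements are invariant and algebraically independent. Their number equals the number of free coefficients of the standard form, hence $\dim \mathcal{AS}_{g,\vec{d}}$, and the paper concludes from this that they generate the invariant ring.

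\textbf{Your route.} You compute the invariant ring directly. The acting group is the scalar action of $\F_p^\times$, so the invariants are the monomials of degree divisible by $p-1$, generated by those of degree exactly $p-1$. You then write each such monomial explicitly as a product of listed elements divided by $(a_{d_1}^{p-1})^{p-2}$.

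\textbf{What each approach buys.} Your route gives a precise statement: the claim holds for $\Fpbar[V][a_{d_1}^{-1}]^{\F_p^\times}$ once $(a_{d_1}^{p-1})^{-1}$ is adjoined, or equivalently for the invariant field. You also correctly see that it fails in the polynomial ring itself. For instance, $a_1^{p-1}$ is not a polynomial in the listed elements, since each of them has $a_{d_1}$-degree at least $p-2$.

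The paper's counting argument cannot detect this. Algebraic independence plus a match of transcendence degree only shows that the invariant field is algebraic over the field generated by the listed elements. Something like your explicit formula, or your reconstruction check, is needed to close that gap.

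\textbf{A minor imprecision.} The true parameter space also has $b_{d_2}c_{d_3}\neq 0$. If you localize there too, you would additionally need inverses of $a_{d_1}^{p-2}b_{d_2}$ and $a_{d_1}^{p-2}c_{d_3}$. So calling the localization at $a_{d_1}$ alone ``the relevant coordinate ring'' is not quite accurate. It is, however, a coherent choice, and the statement you prove for it is correct.
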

    \begin{proof}
        Every standard form preserving isomorphism scales the coefficients of a curve in standard form \eqref{eq:3pole} by some $\lambda\in\F_p^{\times}$, so in particular $a_{d_1}^{p-1}$ is invariant.  It is easy to check that the $(p-1)$-st power of any coefficient is also invariant and, since the scalar $\lambda$ is in common for all coefficients, so will the other elements of the generating set.  These expressions are all algebraically independent, and their number is equal to the number of free coefficients in the standard form, whichin turn is equal to the dimension of the moduli space and thus the number of invariants.  Therefore, the set described above is a complete generating set for the invariant ring.
    \end{proof}
    
\begin{example}[$r=2$, $p=3$, $\vec{d}=\{4,2,1\}$, $g=8$]\label{ex:E=5,3,2}
 
Equation \eqref{eq:3pole} becomes 
 \[y^3-y=a_4x^4 + a_2x^2+a_1x+\frac{b_1}{x} + \frac{b_2}{x^2}+\frac{c}{x-1},\] where $a_4, b_2,c\neq0$.  Then the invariant ring is generated by $I_1= a_4a_1,$ $I_2=a_4a_2$, $I_3=a_4^2$, $I_4=a_4b_2$, $I_5=a_4b_1$, and $I_6=a_4c$.
 It is straightforward to see that this is a reconstructing set, and since the dimension of the moduli space component is $6$, these invariants must be algebraically independent.
\end{example}

\subsection{Two poles with same order, third of distinct order}
If two poles have the same order and the third has a different order, then the computation of invariants is also fairly straightforward. The poles of equal order can be exchanged, but the third pole must be fixed.
The isomorphisms preserving \eqref{eq:3pole} form a group $G$ isomorphic to $\mathfrak{S}_2 \times \F_p^\times$, which acts linearly on the coefficients of $g(x)$. If $p>2$, then $p$ does not divide $|G| = 2(p-1)$. So the generators of the invariant ring have degrees bounded by $2(p-1)$ by Lemma~\ref{lem:bound}.

\subsection{Three poles with same order}\label{subsec:3sameOrder}
This case is the most difficult case as demonstrated by even the simplest example discussed in \cite[Sec.\ 4.4.3]{win6}.  In this case, the group $G \cong \mathfrak{s}_3 \times \F_p^\times$ of isomorphisms of standard forms has order $6(p-1)$ and acts linearly on the coefficients of $g(x)$. When $p>3$ (the non-modular case), the degree of each generator in the coefficients is bounded by $|G|$ by Lemma~\ref{lem:bound}. However, when $p=3$ (the modular case), we need to resort to Lemma~\ref{lem:boundp} which gives a much larger bound on these degrees. We implemented the required algorithms in \MAGMA.  The next smallest example beyond the one given in \cite[Sec.\ 4.4.3]{win6} has parameters $r=2$, $p=3$, $\vec{d}=\{2,2,2\}$, $g=8$.
In this case, the \MAGMA~computation of the invariant ring using the linear action did not terminate due to insufficient memory.

\subsection{Generalizations from the three pole case}\label{ssec:3plus}

Consider an Artin-Schreier curve $C_f$ that has more than three poles but with three pole orders different from all hte other pole orders; for example, $\vec{d}=\{1, 1, 1, 2\}$ or $\vec{d}=\{1,1,2,4,5\}$. In this case, we can take advantage of our work in the three pole case to ease the computation of invariants.  First, we explicitly describe the standard model employed for this case (as described in Section~\ref{sec:MainAlg}).
\begin{theorem}\label{thm:standard3plus}
    Let $C_f$ be an Artin-Schreier curve defined over $\Fpbar$ where $f(x)$ has $r+1$ poles with three poles of order disjoint from the orders of the remaining poles. Choose the subset of this type with largest pole orders to be $d_1,$ $ d_2$, and $d_3$, with $d_1\geq d_2 \geq d_3$.  List the remaining pole orders in decreasing sequence and denote them $d_4, \dots, d_{r+1}$.  Then $C_f$ is isomorphic to a curve $C_g:y^p-y=g(x)$, where \begin{equation}\label{eq:3+form}
    g(x)=F(x)+G\left(\frac{1}{x}\right)+H\left(\frac{1}{x-1}\right)+\sum_{j=4}^{r+1}J_j\left(\frac{1}{x-\theta_j}\right),
\end{equation} with $F(x)=\sum_{i=1}^{d_1}a_ix^i$, $G(x)=\sum_{i=1}^{d_2}b_ix^i$,  $H(x)= \sum_{i=1}^{d_3}c_ix^i$, $J_j(x)=\sum_{i=1}^{d_j}e_{ij}x^i,$ where $a_i=b_i=c_i=e_{ij}=0$ for any $i\equiv 0\pmod{p}$ and $a_{d_1},b_{d_2},c_{d_3},e_{jd_j}\neq 0$ .  
\end{theorem}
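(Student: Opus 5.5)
The plan is to deduce Theorem~\ref{thm:standard3plus} from Theorem~\ref{T:normal} by choosing which three poles are sent to $P_\infty$, $P_0$, $P_1$, and then to clean up the remaining poles by partial fractions plus the Artin-Schreier reduction.

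First, choose the three distinguished poles $Q_1,Q_2,Q_3$ of orders $d_1\ge d_2\ge d_3$, the three largest orders whose values differ from all other pole orders. Take the unique M\"obius transformation $M$ sending $Q_1,Q_2,Q_3$ to $P_\infty,P_0,P_1$; this is possible because $\operatorname{PGL}_2(\Fpbar)$ acts sharply $3$-transitively on $\mathbb{P}^1$. Substituting $M$ into $f$ gives an isomorphic curve $y^p-y=f(M(x))$ via \eqref{eq:ASisom}. Its poles sit at $\infty$, $0$, $1$ with orders $d_1,d_2,d_3$. The other poles sit at points $\theta_4,\dots,\theta_{r+1}\in\Fpbar\setminus\{0,1\}$ with orders $d_4,\dots,d_{r+1}$, which we relabel in decreasing order. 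Pole orders are unchanged, since $M$ is an automorphism of $\mathbb{P}^1$.

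Next, write the partial fraction decomposition of the rational function $f(M(x))$ over $\Fpbar$:
\[ f(M(x)) = c + F(x) + G\left(\tfrac{1}{x}\right) + H\left(\tfrac{1}{x-1}\right) + \sum_{j=4}^{r+1} J_j\left(\tfrac{1}{x-\theta_j}\right), \]
where $F,G,H,J_j$ have zero constant term and degrees $d_1,d_2,d_3,d_j$. Here "degree" means the pole order at the corresponding point, so the leading coefficients $a_{d_1},b_{d_2},c_{d_3},e_{jd_j}$ are nonzero. Remove the constant $c$ by $y\mapsto y+t$ with $t^p-t=c$.

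The remaining step, and the only real work, is to eliminate every monomial whose exponent is divisible by $p$ in each principal part. We do this one pole at a time, using the reduction of \cite[Lemma~3.7.7]{stichtenoth2009algebraic} (equivalently the argument in the proof of \cite[Theorem~3.3]{win6}). Fix a pole $\theta$ with local parameter $u=1/(x-\theta)$, or $u=x$ at $\infty$. Work downward from the highest exponent. If $u^{pk}$ appears with coefficient $e$, apply $y\mapsto y+e^{1/p}u^k$. This replaces $e\,u^{pk}$ by $e^{1/p}u^k$, which has strictly smaller exponent, and it only changes the principal part at $\theta$ because $u^k$ has no other poles. Note that $u^k$ for $u=1/(x-\theta)$ is again a polynomial in $u$, so no new poles appear and the other principal parts are unaffected. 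The process terminates since exponents strictly decrease. It also preserves the leading term, because $p\nmid d_j$ by the standing assumption, so the top exponent is never touched. A constant created along the way is removed as before.

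After these reductions we have exactly the form \eqref{eq:3+form}, with $a_i=b_i=c_i=e_{ij}=0$ whenever $p\mid i$ and with nonzero leading coefficients. I expect the main obstacle to be the bookkeeping that this pole-by-pole reduction does not interact between different poles or destroy the leading coefficients. This is handled by the locality of $u^k$ and by the condition $p\nmid d_i$. Unlike Theorem~\ref{T:normal}, no monic normalization is asserted, so no scaling step is needed.
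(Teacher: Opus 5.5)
Your argument is correct. It is exactly the construction the paper relies on: the paper states this theorem without a separate proof, as the variant of Theorem~\ref{T:normal} (\cite[Theorem~3.3]{win6}) in which a M\"obius transformation moves the three chosen poles to $P_\infty,P_0,P_1$, after which the constant and the $p$-power monomials are stripped pole by pole via $y\mapsto y+h(x)$. The only slip is one of direction: if $M$ sends $Q_1,Q_2,Q_3$ to $\infty,0,1$, the new model is $y^p-y=f(M^{-1}(x))$, not $f(M(x))$; alternatively, take $M$ with $M(\infty)=Q_1$, $M(0)=Q_2$, $M(1)=Q_3$.
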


As before, the curve model in \eqref{eq:3+form} is not unique. We resolve this by using Proposition~\ref{prop:subgroup} and first considering the action of $\mathcal{S}$, the permutations group acting on the labeling of poles $\theta_i$ of equal order, before applying the isomorphisms described in \eqref{eq:ASisom}. The invariant ring $\Fpbar[e_{ij},\theta_i]^\mathcal{S}$ is computable with Corollary~\ref{multisymOrbits}. Write $\Fpbar[e_{ij},\theta_j]^\mathcal{S}=\Fpbar[f_k]$, where the $f_k$ are multisymmetric functions in $\{e_{ij},\theta_j\}$.

Now we consider the true isomorphisms as in \eqref{eq:ASisom}. The M\"{o}bius transformations~$M$ acting on this standard form must preserve the set $\{P_{\infty}, P_0,P_1\}$ and hence form a group $G$ isomorphic to a subgroup of $\mathfrak{S}_3$.  We showed that these act $\Fpbar$-linearly on the coefficients of $F$, $G$, and $H$. Furthermore, the $\lambda$-action uniformly scales all coefficients of $F$, $G$, $H$, $J_4, \cdots , J_{r+1}$ by $\lambda^{-1}$. This resolves the model  ambiguity, so can now compute invariants of $G$ acting on $a_i, b_i, c_i, f_i$.  In practice, it is difficult to write down the general action precisely, but we can handle some special cases.

\begin{theorem}\label{thm:3unique}  If $d_1>d_2>d_3$ and $\{d_1,d_2,d_3\} \cap\{d_4,\dots d_{r+1}\} = \emptyset$, then every isomorphism preserving the standard form in Theorem~\ref{thm:standard3plus} is $\phi_{Id,\lambda}$ for $\lambda\in\F_p^{\times}$. These isomorphisms form a group that acts linearly on $K[a_i,b_i,c_i,g_i,f_i]$. The invariant ring $K[a_i,b_i,c_i,g_i,f_i]^G$ can 
be computed using \cite[Algorithm~2.7.3]{sturmfelsIT}. 
\end{theorem}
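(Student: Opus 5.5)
The plan is to show that the Möbius part $M$ of any standard-form-preserving isomorphism $\phi_{M,\lambda}$ is the identity. The rest of the statement then follows from the $\lambda$-action and from Proposition~\ref{prop:subgroup}.

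\emph{Step 1: $M$ fixes $P_\infty$, $P_0$, $P_1$.} By \eqref{eq:ASisom}, $\phi$ induces a bijection on the poles of $g$. Since the $y$-change $\lambda y+h(x)$ cannot alter pole orders once $p$-power terms are excluded, this bijection preserves pole orders. This is the same argument that gives $p\nmid d_i$ via \cite[Lemma 3.7.7]{stichtenoth2009algebraic}.

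Both the source and target models have the form of Theorem~\ref{thm:standard3plus}. In each, the three distinguished poles $P_\infty,P_0,P_1$ are the poles of orders $d_1,d_2,d_3$. By hypothesis $\{d_1,d_2,d_3\}\cap\{d_4,\dots,d_{r+1}\}=\emptyset$, and the three orders are pairwise distinct. Hence each order $d_1$, $d_2$, $d_3$ occurs exactly once among all the poles. Consequently $M$ must send $P_\infty\mapsto P_\infty$, $P_0\mapsto P_0$ and $P_1\mapsto P_1$.

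\emph{Step 2: $M=\operatorname{Id}$ and $h$ is trivial.} A Möbius transformation fixing three distinct points of $\mathbb{P}^1$ is the identity, so $M(x)=x$. By Remark~\ref{rem:auto}, $M$ determines $h$ up to an $\F_p$-multiple. The transformation $(x,y)\mapsto(x,\lambda y)$ with $h=0$ already preserves the standard form. Therefore $\phi=\phi_{\operatorname{Id},\lambda}$ up to composition with the automorphism $y\mapsto y+1$, which we disregard. I would double-check this step directly: with $M=\operatorname{Id}$, the function $h(x)^p-h(x)$ must lie in the span of non-$p$-power monomials. The argument in the proof of Proposition~\ref{prop:r=0easy}, applied separately to the polar parts at each pole, then forces $h\in\F_p$.

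\emph{Step 3: linear action and invariants.} The transformation $\phi_{\operatorname{Id},\lambda}$ sends $g\mapsto\lambda^{-1}g$. Thus every coefficient $a_i,b_i,c_i,e_{ij}$ is scaled by $\lambda^{-1}$, while every $\theta_j$ is fixed. The $\mathcal S$-invariants $f_k$ are multisymmetric polynomials in the $e_{ij}$ and $\theta_j$. Each can be taken homogeneous in the $e$-variables, and then it is scaled by a power of $\lambda^{-1}$. So $G\cong\F_p^\times$ acts diagonally, hence linearly, on $K[a_i,b_i,c_i,f_k]$ (and on any auxiliary generators $g_i$).

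Since $\mathcal S$ commutes with this scaling, Proposition~\ref{prop:subgroup} gives
\[
K[\ldots]^{G\times\mathcal S}=(K[\ldots]^{\mathcal S})^{G}.
\]
This is the invariant ring of a cyclic group of order prime to $p$ acting diagonally, which \cite[Algorithm 2.7.3]{sturmfelsIT} computes.

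The main obstacle I expect is Step 1's justification that pole orders are preserved and that no $p$-power terms can be reintroduced. Concretely, one must check that $h^p-h$ cannot cancel the leading polar term at some pole, since $p\nmid d_i$ there. Everything else is formal.
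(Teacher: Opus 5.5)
Your proposal is correct and follows essentially the same route as the paper. The paper's justification is the discussion preceding the theorem: the M\"obius part must permute $\{P_\infty,P_0,P_1\}$; the pairwise distinct orders $d_1>d_2>d_3$, absent among $d_4,\dots,d_{r+1}$, force it to fix each of these poles and hence to be the identity; and the remaining $\lambda$-action scales all coefficients by $\lambda^{-1}$, so that Proposition~\ref{prop:subgroup} and \cite[Algorithm~2.7.3]{sturmfelsIT} apply. You additionally spell out points the paper leaves implicit: that $h^p-h$ cannot alter pole orders coprime to $p$, that $h$ is forced into $\F_p$, and that the $\mathcal S$-invariants $f_k$ can be chosen homogeneous in the $e_{ij}$, so that the $\lambda$-action on them is diagonal.
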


\begin{example} A simple example 
is in $r=4$, $p=3$, $\vec{d}=\{1,1,2,4,5\}$, $g=16$.  This corresponds to a dimension $13$ irreducible component of the moduli space $\mathcal{AS}_{16,8}$.  We choose a standard model of the form $y^3-y=f(x)$, where $f(x)$ equals
\[a_5x^5+a_4x^4+a_2x^2+a_1x+ 
\frac{b_1}{x} + \frac{b_2}{x^2} + \frac{b_3}{x^3} + \frac{b_4}{x^4}
+\frac{c_1}{x-1} + \frac{c_2}{(x-1)^2}
+ \frac{e_1}{x-{\theta_1}}+\frac{e_2}{x-\theta_2},\] where $\theta_1\neq \theta_2$, $\theta_i\not\in\{0,1\}$ for $i=1,2$, and $a_5, b_4, c_2, e_1, e_2\neq 0$. 

From the discussion above, we see that $P_{\infty}$, $P_0$ and $P_1$ must be fixed by any isomorphism of standard forms, as in the three poles of multiplicity one case.  Thus, the functions $I_1=a_5a_1, I_2=a_5a_2, I_3=a_5a_4, I_4=a_5^2, I_5=a_5b_4, I_6=a_5b_2, I_7=a_5b_1, I_8=a_5c_2, \textrm{ and }I_9=a_5c_1,$
are invariant, as well as $I_{10}=a_5(e_1+e_2), I_{11}=\theta_1+\theta_2, I_{12}=a_5(e_1\theta_1+e_2\theta_2), I_{13}=a_5^2e_1e_2, \textrm{ and }I_{14}=\theta_1\theta_2,$
the additional invariants arising from the action of the symmetric group $\mathfrak{S}_2$ on the poles $P_{\theta_1}$ and $P_{\theta_2}$, which have no natural ordering in the standard form.  Note that the set of invariants $\{I_1, I_2,\dots, I_{14}\}$ forms a reconstructing system. 
We also have the  relation $I_{12}(I_{10}I_{11}-I_{12})=I_{14}(I_{10}^2+I_{13})+I_{13}(I_{11}^2+I_{14}).$
\end{example}

When all poles have multiplicity one, we can explicitly describe all the invariants.

\begin{corollary}\label{cor:allunique}  If $C_f$ is an Artin-Schreier curve in form \eqref{eq:3+form} with at least $4$ poles, and all poles have multiplicity one, then the invariants ring of the curve is generated by the invariants listed in Theorem~\ref{thm:3unique} and those in the set $\{a_{d_1}^{p-2}e_{ij},\theta_j: 1\leq i\leq d_j, p\nmid i, 4\leq j\leq r+1\}.$
\end{corollary}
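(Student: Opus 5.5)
The plan is to reduce to the three-distinct-poles situation and then use the fact that the only remaining symmetry is the scalar $\lambda$-action. Since all poles have multiplicity one, the pole orders $d_1>d_2>\cdots>d_{r+1}$ are pairwise distinct. By Theorem~\ref{thm:3unique}, every standard-form preserving isomorphism is $\phi_{\mathrm{Id},\lambda}$ with $\lambda\in\F_p^\times$. Moreover, the permutation group $\mathcal{S}$ acting on the labels of poles of equal order is trivial, because no two poles $\theta_j$ share an order. Hence the labeling $\theta_4,\dots,\theta_{r+1}$ is canonical: each $\theta_j$ is singled out by its pole order $d_j$. It follows that the coordinate ring is the polynomial ring in $a_i,b_i,c_i,e_{ij},\theta_j$ (with $a_{d_1}$ inverted as needed), and $G\cong\F_p^\times$ acts diagonally.

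The action itself is as follows. Since $M=\mathrm{Id}$, the poles are fixed, so each $\theta_j$ is invariant. The $\lambda$-action scales every coefficient $a_i,b_i,c_i,e_{ij}$ by the same character $\lambda^{-1}$. Consequently $a_{d_1}^{p-2}e_{ij}$ picks up $\lambda^{-(p-1)}=1$ and is invariant, exactly as $a_{d_1}^{p-2}a_i$, $a_{d_1}^{p-2}b_j$ and $a_{d_1}^{p-2}c_k$ are in the three-pole proposition. I take the invariants ``listed in Theorem~\ref{thm:3unique}'' to be these three-pole generators together with $a_{d_1}^{p-1}$.

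Generation I will prove in one of two ways. The first is by dimension and reconstruction, mirroring the proof of the three-distinct-orders proposition. Given the values of the listed invariants and $a_{d_1}^{p-1}$, choose a $(p-1)$-st root $a_{d_1}$; this choice is determined up to $\F_p^\times$, which is precisely the $\lambda$-ambiguity. Dividing each invariant $a_{d_1}^{p-2}x$ by $a_{d_1}^{p-2}$ then recovers every coefficient $x$, and the $\theta_j$ are read off directly. So the set is reconstructing, and Theorem~\ref{thm:reconstruct} together with Remark~\ref{genericallyreconstruction} applies. The second, cleaner route is a direct monomial argument, which I prefer. Because the action is diagonal by a cyclic group of order coprime to $p$, the invariant ring is spanned by the monomials $\prod\theta_j^{m_j}\prod x^{k_x}$ with $\sum k_x\equiv 0\pmod{p-1}$. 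Using $a_{d_1}$ as a pivot, and allowing division by the nonzero leading coefficient $a_{d_1}$ as in the paper's conventions, such a monomial factors as a product of the elements $a_{d_1}^{p-2}x$, times a power of $a_{d_1}^{p-1}$, times powers of the $\theta_j$.

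The main obstacle is making this factorization honest in the polynomial ring rather than in its localization at $a_{d_1}$. A monomial such as $b_1^{p-1}$ is invariant, but rewriting it through the pivot requires inverting $a_{d_1}$. I would handle this by working in the coordinate ring of the open stratum where $a_{d_1}\neq0$, which is justified because $a_{d_1}\ne0$ is part of the standard form. If that is not accepted, the fallback is to enlarge the generating set to all degree-$(p-1)$ monomials in the coefficients, as in Proposition~\ref{prop:r=1diff} with the bound from Lemma~\ref{lem:bound}, and to present the stated set as generators after localization.
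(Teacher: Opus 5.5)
Your analysis of the action is right. With pairwise distinct pole orders, $\mathcal{S}$ is trivial and only $\phi_{\mathrm{Id},\lambda}$ occurs. Every coefficient is scaled by $\lambda^{-1}$ and every $\theta_j$ is fixed, so the listed elements are invariant. The paper gives no separate proof. The intended argument is the one used for the three-distinct-orders proposition: the listed elements are algebraically independent, and their number $\sum_j(d_j-\lfloor d_j/p\rfloor)+r-2$ equals $\dim\mathcal{AS}_{g,\vec d}$ from Theorem~\ref{theorem:PriesZhuDimension}.

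The obstacle you flag is not an artifact of your method; it is a defect of the statement read literally. Since $p-2\ge1$, every element of the algebra generated by the listed set has the form $P(\theta)+a_{d_1}^{p-2}Q$. Hence the invariant $b_1^{p-1}$ is not in it. Nor is the $(p-1)$-st power of any coefficient of $J_4$, which is missed under any reading of ``the invariants listed in Theorem~\ref{thm:3unique}''. The paper's counting argument cannot detect this, because algebraic independence plus the right number of elements only shows that the invariant field is algebraic over the subfield they generate. The same looseness already appears in the example with $p=3$, $\vec d=\{4,2,1\}$, where $b_2^2$ is invariant but not in the stated algebra.

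Your repair is still one generator short, and this is the concrete gap. In the localization $K[X][a_{d_1}^{-1}]$, the element $(a_{d_1}^{p-1})^{-1}$ is invariant but is not a polynomial in the listed elements. Your factorization genuinely needs negative powers of the pivot, e.g.\ $b_1^{p-1}=(a_{d_1}^{p-2}b_1)^{p-1}(a_{d_1}^{p-1})^{-(p-2)}$. With $(a_{d_1}^{p-1})^{-1}$ adjoined, your monomial argument is complete. Take an invariant monomial $\prod_j\theta_j^{m_j}\,a_{d_1}^{m}\prod_{x\ne a_{d_1}}x^{k_x}$ with $m\in\mathbb{Z}$ and $N=\sum k_x$. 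It satisfies $m+N\equiv0\pmod{p-1}$, hence equals $\prod_j\theta_j^{m_j}\prod_x(a_{d_1}^{p-2}x)^{k_x}\,(a_{d_1}^{p-1})^{(m-(p-2)N)/(p-1)}$. If you invert everything the true parameter space requires (the other leading coefficients, and $\theta_j$, $\theta_j-1$, $\theta_i-\theta_j$), the corresponding inverses must be adjoined as well.

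So what you can actually prove is that the listed set together with $(a_{d_1}^{p-1})^{-1}$ generates the invariants of the localization at $a_{d_1}$. In particular, the listed set generates the invariant field and separates orbits on $\{a_{d_1}\neq0\}$. By contrast, the invariant ring of the polynomial ring is generated by the $\theta_j$ and all monomials of degree $p-1$ in the coefficients (your fallback), which is strictly larger than the corollary's list.

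Finally, your first route cannot give ring generation by itself. Remark~\ref{genericallyreconstruction} only yields the invariant field. Theorem~\ref{thm:reconstruct} needs a subalgebra that is separating on all of $V$, which fails on $\{a_{d_1}=0\}$: a point with $b_1=1$ and all other coefficients zero is not separated from the point with all coefficients zero and the same $\theta_j$. Even when it applies, it only returns the purely inseparable closure of the normalization.
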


\begin{example}[$r=3$, $p=3$, $\vec{d}=\{5,4,2,1\}$, $g=14$]
The standard form of this curve is $y^3-y=f(x)$ with \[f(x)= a_5x^5+a_4x^4+a_2x^2+a_1x+ 
\frac{b_1}{x} + \frac{b_2}{x^2} + \frac{b_4}{x^4}
+ \frac{c_1}{x-1} + \frac{c_2}{(x-1)^2}
+ \frac{e}{x-{\theta}},\] where $\theta\not\in\{0,1\}$ and $a_5, b_4, c_2, e\neq 0$. 
Again,  the functions $I_1=a_5a_1, I_2=a_5a_2, I_3=a_5a_4, I_4=a_5^2, I_5=a_5b_4, I_6=a_5b_2, I_7=a_5b_1, I_8=a_5c_2, \textrm{ and }I_9=a_5c_1$ are invariant. Since the pole $P_{\theta}$ is also of multiplicity one, we obtain the further invariants $I_{10}=a_5e$ and $I_{11}=\theta$.  The set $\{I_1, I_2,\dots, I_{11}\}$ is a reconstructing system, and its cardinality 11 is equal to the dimension of the corresponding component of the moduli space $\mathcal{AS}_{14,6}$ by Theorem~\ref{theorem:PriesZhuDimension}. 
\end{example}

The curve model given in \eqref{eq:3+form} is also well suited to the case of exactly four poles with one pole order different from all the others.  In this case, there is no ambiguity in the ordering of non-distinguished poles, so $\mathcal{S}$ is trivial. The following example illustrates this situation.

\begin{example}\label{ex:2,2,2,3}($r=3$, $p=3$, $\vec{d}=\{1,1,1,2\}$, $g=8$).  This curve has standard model
\[y^3-y=ax+\frac{b}{x}+\frac{c}{(x-1)}+\frac{e_1}{(x-\theta)}+\frac{e_2}{(x-\theta)^2},\] where $a, b, c, e_2,\theta\neq 0$ and $\theta\neq 1$.  The group acting is $\mathfrak{S}_3\times\F_3^{\times}$, with the M\"{o}bius transformations acting as $\mathfrak{S}_3$ given by $M(x)\in\{x,\frac{1}{x},1-x,\frac{1}{1-x}, \frac{x-1}{x}, \frac{x}{1-x}\}$.  Using the notation from above, we observe that the  coefficient vector $V=(a,b,c,e_1,e_2,\theta)$ is sent to one of the following:
\begin{align*}
&    (a,b,c,e_1,e_2,\theta),\, (b,a,-c,\tfrac{-e_1+e_2}{\theta}, \tfrac{-e_2}{\theta}, \tfrac{1}{\theta}),\, (-a,-c,-b,-e_1,e_2,\theta-1),\,\\ 
&    (-c,-a,-b,\tfrac{e_1+e_2}{\theta-1}, \tfrac{-e_2}{\theta-1}, \tfrac{1}{\theta-1}),\, (-b,c,-a,\tfrac{e_1-e_2}{\theta}, \tfrac{-e_2}{\theta}, \tfrac{1-\theta}{\theta}),\, (a,c,b,e_1,-e_2,\theta-1),\\
&   (c,-b,a,\tfrac{e_1+e_2}{\theta-1}, \tfrac{-e_2}{\theta-1}, \tfrac{\theta}{\theta-1}),\,(-a,-b,-c,-e_1,-e_2,\theta),\, (-b,-a,c,\tfrac{e_1-e_2}{\theta}, \tfrac{e_2}{\theta}, \tfrac{1}{\theta}),\\ 
&   (c,a,b,\tfrac{-e_1-e_2}{\theta-1}, \tfrac{e_2}{\theta-1}, \tfrac{1}{\theta-1}),\, (b,-c,a,\tfrac{-e_1+e_2}{\theta}, \tfrac{e_2}{\theta}, \tfrac{1-\theta}{\theta}),\, (-c,b,-a,\tfrac{-e_1-e_2}{\theta-1}, \tfrac{e_2}{\theta-1}, \tfrac{\theta}{\theta-1}).
\end{align*}

For $a$, $b$, and $c$, we obtain the same invariants as in \cite[Corollary 4.16]{win6}, namely $I_1=(abc)^2$, $I_2=(abc)(a-b-c)$, $I_3=ab+ac-bc$, and $I_4=a^2+b^2+c^2$ with the relation $I_1(I_3+I_4)=I_2^2$. We further have the invariant $I_5= \frac{(\theta^2-\theta+1)^3}{\theta^2(\theta-1)^2}$, obtained as the sum of the orbit of $\theta$.  We can check that $I_6=e_2^2\frac{\theta^2(\theta-1)^2+\theta^2+(\theta-1)^2}{\theta^2(\theta-1)^2}$ and $I_7=e_1^2+\frac{(e_1-e_2)^2}{\theta^2}+\frac{(e_1+e_2)^2}{(\theta-1)^2}$ are invariant.  The set $\{I_1,I_2,I_3,I_4\}$ allows for the reconstruction of $a$, $b$ and~$c$, while the invariant $I_5$ allows reconstruction of $\theta$ (can solve for six possible values, all valid for some form).  Having chosen a valid $\theta$, we can use $I_6$ and $I_7$ to determine a valid pair $(e_1,e_2)$.
\end{example}

\section{Four Poles}\label{sec:fourPoles}

The scenario of 4 poles of equal order is difficult and not covered by the generalizations in previous sections. We present the special case where the 4 poles have order 1 in detail, as it already illustrates the added complexities that need to be considered in the general case of 4 poles of equal order.

\subsection{Four poles of equal order}
The standard form in this case is
$$
y^p-y=ax+\frac{b}{x}+\frac{c}{x-1}+\frac{e}{x-\theta}.
$$

Any isomorphism between curves of this form will send $\theta$ to one element in the set $\{\theta, 1-\theta, \frac{1}{\theta}, \frac{1}{1-\theta}, \frac{\theta-1}{\theta}, \frac{\theta}{\theta-1}\}$, so $j\colonequals \frac{(\theta^2-\theta+1)^3}{\theta^2(\theta-1)^2}$ will remain invariant. Denote by $P_j(x)$ the polynomial $(x^2-x+1)^3-jx^2(x-1)^2$, and put $k = \Fpbar(j)$.

Let $G$ be the finite subgroup of order $24$ of $\operatorname{PGL}_2(k)$ sending $\{P_\infty, P_0,P_1,P_\theta\}$ to $\{P_\infty,P_0,P_1,P_{\theta'}\}$ in some order.
We consider the action (change of variables plus eliminating the constant term) of this group on the ring $R=k[a,b,c,e,\theta]/(P_j(\theta))$ with the graduation $R=\oplus_{i\geq0}R_i$ given by $\deg(a)=\deg(b)=\deg(c)=\deg(e)=1$ and $\deg(\theta)=0$. The $k$-vector spaces $R_i$ are finite dimensional and the action of $G$ on them is linear.  The group $G$ is generated by
\begin{align*}
&M_1\colon 
[a,b,c,e,\theta]\mapsto[{b}, a, -{c}, -\tfrac{e}{\theta^2}, \tfrac{1}{\theta}]
\\
&M_2\colon
[a,b,c,e,\theta]\mapsto[c,-b,a,\tfrac{-e}{(\theta-1)^2},\tfrac{\theta}{\theta-1}]
\\
&M_3\colon
[a,b,c,e,\theta]\mapsto[\tfrac{-e}{\theta(\theta-1)}, \tfrac{b(\theta-1)}{\theta}, \tfrac{c\theta}{\theta-1}, a\theta(1-\theta), 1-\theta]
\end{align*}

We can use classical techniques to compute a generating system of invariants.

\begin{theorem} The invariant ring $R^G$ is the ring of integers of the fraction field $k(J_1, J'_2, J_2, J_3,J_4)$, for which $J_1=(\theta^2-\theta+1)\left(a^2+\tfrac{b^2}{\theta^2}+\tfrac{c^2}{(1-\theta)^2}+\tfrac{e^2}{\theta^2(1-\theta)^2}\right),\,$ $J'_2=ab-bc+ca+e\left(a-\tfrac{b}{\theta^2}-\tfrac{c}{(\theta-1)^2}\right)$, $J_2=a^2b^2+b^2c^2+c^2a^2+e^2\left(a^2+\tfrac{b^2}{\theta^4}+\tfrac{c^2}{(\theta-1)^4}\right),$
$J_3=a^2b^2c^2+\tfrac{a^2b^2e^2}{\theta^2}+\tfrac{a^2c^2e^2}{(\theta-1)^2}+\tfrac{b^2c^2e^2}{\theta^2(1-\theta)^2},\,J_4=\tfrac{abce}{1-\theta+\theta^2}.$

\end{theorem}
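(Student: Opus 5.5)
The plan is to combine three ingredients: invariance of the $J$'s, generic reconstruction of orbits from their values (which gives equality of fraction fields via Remark~\ref{genericallyreconstruction}), and a normality/integrality argument to pass from fields to rings. Here ``ring of integers'' means the integral closure of $k[J_1,J_2',J_2,J_3,J_4]$ in $k(J_1,J_2',J_2,J_3,J_4)$. Since $\theta$ is algebraic over $k$, $R$ is a finite free $k[a,b,c,e]$-module. Note that $R$ need not be a domain, since $P_j$ may factor over $k=\Fpbar(j)$; in that case I would work componentwise, or over the splitting field, where $G$ permutes the components.

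\emph{Step 1: invariance.} Since $G=\langle M_1,M_2,M_3\rangle$, it suffices to check $M_i(J)=J$ for each listed $J$ and $i=1,2,3$. This is a direct computation in $R$. For example, $M_1$ sends $\theta^2-\theta+1$ to $(\theta^2-\theta+1)/\theta^2$ and permutes the four summands of $J_1$ up to that factor. Because the relation $P_j(\theta)=0$ is not needed for $M_1,M_2,M_3$ to act on $\theta$, I expect these identities to hold already in $k[a,b,c,e,\theta][1/(\theta(\theta-1)(\theta^2-\theta+1))]$. I would verify them with \MAGMA{} rather than by hand.

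\emph{Step 2: generic reconstruction.} Put $u_1=a$, $u_2=b/\theta$, $u_3=c/(1-\theta)$, $u_4=e/(\theta(1-\theta))$. Then $J_1/(\theta^2-\theta+1)$, $J_2$, $J_3$ and $J_4$ are, up to explicit factors depending only on $\theta$, symmetric-type expressions in the $u_i^2$ together with the product $u_1u_2u_3u_4$. Fix $\theta$ among the six roots of $P_j$; all six are $G$-equivalent, so this choice costs nothing. For generic values of the $J$'s, I would then solve successively for the squares $a^2,b^2,c^2,e^2$, and use $J_4$ and $J_2'$ to fix the signs up to the action of the stabiliser of $\theta$ in $G$. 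This shows that the $J$'s form a generically reconstructing system. Remark~\ref{genericallyreconstruction} (Reimers--Sezer) then gives $\operatorname{Frac}(R)^G=k(J_1,J_2',J_2,J_3,J_4)$. As a sanity check on the count, $\dim R=4$ over $k$ and we have five $J$'s, so I expect exactly one algebraic relation among them.

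\emph{Step 3: from fields to rings.} $R^G$ is integrally closed in its fraction field, since $R$ is normal (it is reduced and a finite étale extension of a polynomial ring, generically) and invariants of a normal ring under a finite group are normal. It then remains to show that $R^G$ is integral over $A=k[J_1,J_2',J_2,J_3,J_4]$. For this it suffices that $R$ is integral over $A$. Since all $J$'s are homogeneous of positive degree in the grading $\deg\theta=0$, graded Nakayama reduces this to a nullcone statement: if all $J$'s vanish at a point $(a,b,c,e,\theta)$ with $P_j(\theta)=0$, then $a=b=c=e=0$. With $R^G$ normal and integral over $A$ and $\operatorname{Frac}(R^G)=\operatorname{Frac}(A)$, it follows that $R^G$ is the integral closure of $A$, as claimed.

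\emph{Main obstacle.} I expect the nullcone check in Step 3, together with the sign and branch bookkeeping in Step 2, to be the main difficulty. For the nullcone, I would first use $J_4=0$ to force one of $a,b,c,e$ to vanish. By the transitivity of $G$ on $\{P_\infty,P_0,P_1,P_\theta\}$, I may assume $e=0$. Then $J_3=a^2b^2c^2=0$ and $J_2=0$, so at most one of $a,b,c$ is nonzero. Finally, $J_1=(\theta^2-\theta+1)\cdot(\text{that square, times a factor depending on }\theta)$ forces it to vanish, using $\theta^2-\theta+1\neq0$. That last condition holds when $j\neq0$, and the $j=0$ fibre would need separate attention.
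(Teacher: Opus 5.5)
\paragraph{The gap: the global sign.} Your Step~2 fails at one point, and the paper's proof fails at the same point. All five $J$'s are homogeneous of even degree ($2,2,4,6,4$) in $a,b,c,e$. So they take the same values at $(a,b,c,e,\theta)$ and $(-a,-b,-c,-e,\theta)$.

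These two points are not $G$-equivalent. $M_1,M_2,M_3$ are the pure M\"obius substitutions with $\lambda=1$ (the transpositions $(P_\infty\,P_0)$, $(P_\infty\,P_1)$, $(P_\infty\,P_\theta)$), so $G\cong\mathfrak{S}_4$. The stabiliser of $\theta$ is the Klein four-group. Its non-trivial elements act on $a,b,c,e$ as double transpositions up to factors in $k(\theta)$; for example, $x\mapsto\theta/x$ gives $(a,b,c,e)\mapsto(b/\theta,a\theta,-e/\theta,-c\theta)$. In particular $-\mathrm{Id}$ is not among them.

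Hence no use of $J_4$ and $J_2'$ can ``fix the signs up to the stabiliser of $\theta$ in $G$''. At best the $J$'s reconstruct modulo $G\times\{\pm1\}$, so $k(J_1,J_2',J_2,J_3,J_4)\subseteq\operatorname{Frac}(R)^{G\times\{\pm1\}}$, and $\operatorname{Frac}(R)^G$ has degree $2$ over that field. Concretely,
\[ I=\frac{(\theta+1)(\theta-2)(2\theta-1)}{\theta^2-\theta+1}\Bigl(a+\frac{b}{\theta}+\frac{c}{1-\theta}-\frac{e}{\theta(1-\theta)}\Bigr) \]
lies in $R^G$. Indeed, $M_1,M_2,M_3$ multiply the bracket by $\theta$, $1-\theta$, $-1$ respectively, and multiply the prefactor by the inverses. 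Since $I$ is odd, $I\notin k(J_1,J_2',J_2,J_3,J_4)$.

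So the Reimers--Sezer step cannot yield $\operatorname{Frac}(R)^G=k(J_1,\dots)$, and the theorem read literally for the order-$24$ group is false. The paper hides the same problem in ``from $a^2$ we derive $a$'', where a sign is chosen freely. What your argument and the paper's can prove is the statement for $G\times\{\pm1\}$. That means including the $\lambda=-1$ part of the $\lambda$-action, which is all of $\F_3^\times$ when $p=3$.

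\paragraph{A smaller inaccuracy, and what Step~3 adds.} The same step has a second, smaller flaw, also shared with the paper. Put $U_1=a^2$, $U_2=b^2/\theta^2$, $U_3=c^2/(1-\theta)^2$, $U_4=e^2/(\theta^2(1-\theta)^2)$. Then $J_1,J_3,J_4^2$ are $\theta$-multiples of $e_1,e_3,e_4$ in the $U_i$, but
\[ J_2=\theta^2(U_1U_2+U_3U_4)+(1-\theta)^2(U_1U_3+U_2U_4)+\theta^2(1-\theta)^2(U_1U_4+U_2U_3) \]
is symmetric only under double transpositions. These four forms have no common zero other than $U=0$, so they define a finite map of degree $1\cdot 2\cdot 3\cdot 4=24$. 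That is six times the order of the stabiliser of $\theta$, so they do not pin down the squares even up to that stabiliser. You must use $J_4$ and $J_2'$ already at that stage, not only to fix signs.

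Your Step~3, by contrast, is correct and genuinely adds to the paper, which passes from fraction fields to the ring without comment. In fact $R=\Fpbar(\theta)[a,b,c,e]$ is a polynomial ring over a field, so your hedge about $R$ not being a domain is unnecessary. Your nullcone computation is right. Once generic reconstruction modulo $G\times\{\pm1\}$ has actually been verified, Step~3 shows that $R^{G\times\{\pm1\}}$ is the integral closure of $k[J_1,J_2',J_2,J_3,J_4]$ in $k(J_1,J_2',J_2,J_3,J_4)$.
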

\begin{proof} We only need to compute the action of a finite group acting linearly on $R$. There are  algorithms to effect this, several of them implemented in \MAGMA, but they do not extend to the case where the base field is a function field such as $k = \Fpbar(j)$ as in our current situation. Our ad-hoc implementation takes very long, especially for checking the algebraic independence of the obtained generators. For $p = 3$, since $3\mid 24$, we need to check for generators up to degree $5\times (24-1)$. Instead, we apply a Remark~\ref{genericallyreconstruction} for generically reconstructing systems, which is a stronger version of Theorem~\ref{thm:reconstruct}. So we need to prove that we can generically reconstruct from the invariants (it is straightforward to check that they are invariants) in the statement of the theorem. 

We start by fixing a value of $\theta$ as a root of  $P_j(x)=0$. Next, we compute $\{a^2,\frac{b^2}{\theta^2},\frac{c^2}{(1-\theta)^2},\frac{e^2}{\theta^2(1-\theta)^2}\}$ from their symmetric functions: $J_1,\,J_2,\,J_3,\,J_4^2$. From $a^2$ in that set, we derive $a$. Now $J_4$ produces $6\times4\times1$ possibilities for $b,\,c,\,e$ from  the values $\{\frac{b^2}{\theta^2},\frac{c^2}{(1-\theta)^2},\frac{e^2}{\theta^2(1-\theta)^2}\}$. We evaluate $J'_2$ at these 24 possibilities and obtain different polynomials. Hence, we can generically obtain the correct values of $\{b,c,e\}$ and we can reconstruct. 
\end{proof}
The $\lambda$-action in \eqref{eq:ASisom} can be easily computed as explained after Proposition~\ref{prop:subgroup} and we omit the details here. The same techniques developed in this section apply to the setting of 4 poles of equal order exceeding 1.

\section{Five Poles}\label{sec:5poles}

An even more difficult case is given by 5 or more poles of equal order. As in the previous section, we only provide details for the case where this pole order is $1$; the general case proceeds in a similar fashion. Already in this setting, we encounter the major challenge that arises for Artin-Schreier curves with many poles of equal order, namely that the finite set of standard form preserving isomorphisms no longer forms a group. Here, we apply then Remark~\ref{GnotGroup} and Theorem~\ref{thm:GnotGroup}. 

Consider an Artin-Schreier curve with 5 poles of order $1$. In general, such a curve is given by equations of the form $y^p-y=\frac{P_5(x,z)}{Q_5(x,z)}$ with $\deg(P_5) \le \deg(Q_5) = 5$, $P_5$ and~$Q_5$ coprime, $Q_5$ square-free, and $x \nmid P_5(x)$ (for eliminating $p$-power monomials). The group $\tilde{G}=\operatorname{PGL}_2(\Fpbar)\times\mathbb{F}_p^\times$  acts on their coefficients as described in \eqref{eq:ASisom}. So~$\tilde{G}$ acts on an open Zarisky set $U$ of $\mathbb{P}^{10}$. The computation of the invariants for the action of this infinite group is out of reach at present. Again, our idea is to consider standard models with fewer parameters that are only preserved by a finite number of elements of $\tilde{G}$. Write the curve as
\begin{equation}\label{eq:5poles1}
y^p-y=
ax+\frac{b}{x}+\frac{c}{x-1}+\frac{tx+r}{x^2-sx+u}.
\end{equation}
where $x^2-sx+u=(x-\theta_1)(x-\theta_2)$. We do not want to distinguish $\theta_1$ and $\theta_2$ to avoid imposing an order on the poles. On the other hand, we need to consider them as different in order to well-define the action of the finite set of standard form preserving isomorphisms, which has cardinality $5\times4\times3$. Each such isomorphism is determined by the pre-images of $P_\infty$, $P_0$ and $P_1$. For example, we have:
\begin{eqnarray*}
&&M_1\colon[\infty, 0,1,\theta_1,\theta_2]\mapsto \left [0,\infty,1,\frac{1}{\theta_1},\frac{1}{\theta_2} \right ]
\\[2pt]
&&M_2\colon [\infty, 0,1,\theta_1,\theta_2]\mapsto  \left[1,0,\infty, \frac{\theta_1}{\theta_1-1},\frac{\theta_2}{\theta_2-1} \right ]
\\[2pt]
&&M_{3,1}\colon [\infty, 0,1,\theta_1,\theta_2]\mapsto \left [1-\theta_1,0, 1,\infty, \frac{\theta_2(1-\theta_1)}{\theta_2-\theta_1} \right ]
\\[2pt]
&&M_{3,2}\colon [\infty, 0,1,\theta_1,\theta_2]\mapsto \left [1-\theta_2,0, 1,\frac{\theta_1(1-\theta_2)}{\theta_1-\theta_2},\infty \right ].
\end{eqnarray*}
These transformations do not define a group since we cannot distinguish between the images of $\theta_1$ and $\theta_2$ (i.e.\ the ``new'' values of $\theta_1$ and $\theta_2$) to compose these maps. Writing $\theta = \theta_1$ and substituting $u=\theta(s-\theta)$,
we obtain for example the following:
$$
M_{3,1}\colon\,[a,b,c,s,u,t,r]\mapsto \left [\frac{t\theta+r}{\theta(\theta-1)(s-2\theta)}, \, \frac{b(\theta-1)}{\theta}, \, \frac{c\theta}{\theta-1}, \, \frac{(1-\theta)(2s-3\theta)}{(s-2\theta)}, \right . $$\, 
$$\frac{(1-\theta)^2(s-\theta)}{(s-2\theta)}, \, \frac{\theta(\theta-1)(as^3 - 6as^2\theta + 12as\theta^2 - 8a\theta^3 - st + t\theta - r)}{(s-2\theta)^3},$$ 
$$ \left . \frac{\theta(\theta-1)(as^3 - 5as^2\theta + 8as\theta^2 - 4a\theta^3 - st + t\theta - r)}{(s-2\theta)^3} \right ]
$$
In this way we define the orbits of the coefficients $[a,b,c,s,u,t,r]$ as in Remark~\ref{GnotGroup} and we compute invariants via multisymmetric functions as in Theorem~\ref{thm:GnotGroup}. 

\begin{theorem} Any set of generators of $K[U]^{\tilde{G}}$ evaluated at the the coefficients in~\eqref{eq:5poles1} produces generators of the multisymmetric functions as given in Theorem~\ref{thm:multisym} for the elements of the orbit of $[a,b,c,s,u,t,r]$. 
\end{theorem}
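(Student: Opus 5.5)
The plan is to show two things: every element of $K[U]^{\tilde{G}}$, restricted to the slice \eqref{eq:5poles1}, is multisymmetric on the finite ``orbit'' of $[a,b,c,s,u,t,r]$; and, conversely, these restrictions carry the same information as the generators of Theorem~\ref{thm:multisym}. The statement is essentially an instance of Theorem~\ref{thm:GnotGroup}. We take $R=K[U]$ with the action of $\tilde{G}$. We take $R_0$ to be the coordinate ring of the slice parametrized by $[a,b,c,s,u,t,r]$ (with $\theta$ a root of $x^2-sx+u$), and $\phi$ the embedding sending a point of the slice to the coefficients of $P_5/Q_5$ after clearing denominators in \eqref{eq:5poles1}.

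\textbf{Hypotheses of Theorem~\ref{thm:GnotGroup}.} The $\tilde{G}$-orbit of a generic curve meets the slice in finitely many points. A model of the form \eqref{eq:5poles1} is determined by an ordered choice of which three of the five poles go to $P_\infty,P_0,P_1$, giving $5\cdot4\cdot3=60$ choices. The M\"obius part is then fixed, and Remark~\ref{rem:auto} fixes $h$ up to $\F_p$. Together with $\lambda\in\F_p^\times$ this gives $m=60(p-1)$ points, generically distinct. We need $p\nmid$ the relevant exponents so that the $p$-power elimination is unique; this holds because all pole orders are $1$.

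\textbf{Invariants are multisymmetric.} Let $I\in K[U]^{\tilde{G}}$, and let $v$ and $v'$ be two points of the slice in the same orbit, so $v'=\sigma v$ with $\sigma$ one of the maps $M_1,M_2,M_{3,1},\dots$ combined with a $\lambda$-scaling. Then $I(\phi(v'))=I(\sigma\phi(v))=I(\phi(v))$. Hence the polynomial $I\circ\phi$ takes the same value on every tuple of the orbit. By Remark~\ref{GnotGroup}, it is a multisymmetric polynomial in the orbit coordinates $x_{ij}$, and so lies in the algebra generated by the $s_{k,\mu}$ of Theorem~\ref{thm:multisym}. This works even though the set of maps is not a group: Remark~\ref{GnotGroup} needs only the list of tuples, not a composition law. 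The one subtlety is the ordering of $\theta_1,\theta_2$. Working with $s,u$ symmetric in them and writing the transformed coordinates as rational functions in $\theta$, as in the displayed $M_{3,1}$, I would check that each orbit coordinate is actually defined over $K(a,b,c,s,u,t,r)$ up to the swap $\theta_1\leftrightarrow\theta_2$. The swap just permutes the orbit list, so the multisymmetric functions are regular on the slice.

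\textbf{Generation.} The converse direction is the main obstacle: the restrictions of a generating set of $K[U]^{\tilde{G}}$ must generate the multisymmetric functions. I would argue via separation and reconstruction. A generating set of $K[U]^{\tilde{G}}$ separates generic closed orbits. Two generic slice points lie in the same $\tilde{G}$-orbit exactly when they lie in the same finite orbit list. So the restricted invariants separate the same generic points as the $s_{k,\mu}$. Therefore the two subalgebras of $K(R_0)$ have the same fraction field, by Remark~\ref{genericallyreconstruction} applied to the reconstruction statement of Theorem~\ref{thm:GnotGroup}. To pass from fields to rings, I would use the converse construction: push forward each $s_{k,\mu}$ to an element of $K[U]^{\tilde{G}}$, as in Remark~4.4 of \cite{win6}, by summing over the $60(p-1)$ normalizations of an arbitrary curve. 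This exhibits each $s_{k,\mu}$ as the restriction of an invariant, and hence as a polynomial in the restricted generators. Checking that this push-forward is regular, rather than merely rational, on $U$ is where I expect the real difficulty. I would handle it by clearing the denominators $s-2\theta$, $\theta$ and $\theta-1$, which are discriminant-type expressions, using the fact that $Q_5$ is square-free on $U$.
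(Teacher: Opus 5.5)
Your route is the one the paper has in mind. The paper gives no argument beyond presenting the statement as the instance of Theorem~\ref{thm:GnotGroup} for the slice \eqref{eq:5poles1}, with the converse via push-forward as in Remark~4.4 of \cite{win6}.

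\textbf{The gap.} It sits in the half you treat as routine. Knowing that $F=I\circ\phi$ takes the same value on all $m$ tuples of the orbit list does not by itself put $F$ in the algebra $A$ generated by the $s_{k,\mu}$. The bridge is the averaging identity $mF(v)=\sum_{j=1}^m F(\sigma_j v)$, whose right side is a symmetric polynomial in the tuples. That identity requires $m=60(p-1)$ to be invertible in $K$, i.e.\ $p\ge 7$.

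The cases $p\in\{3,5\}$ are not marginal: $p=3$, $g=8$ is exactly the entry of Table~\ref{tab:smallgenustable} this section is meant to cover. Write $m=pm'$ with $p\nmid m'$. The coefficients of $\prod_{j=1}^m(T-F(\sigma_jv))=(T^p-F^p)^{m'}$ lie in $A$, and this only yields $F^p\in A$. So you only get that $F$ lies in the purely inseparable closure of $A$ (the phenomenon of Theorem~\ref{thm:reconstruct}), not in $A$ itself.

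The principle you cite from Remark~\ref{GnotGroup} (and Corollary~\ref{multisymOrbits}) is false in the modular case. Take $\mathbb{Z}/p$ acting on $K^2$ by $(x,y)\mapsto(x,y+x)$. Then $x$ is invariant, yet every degree-one multisymmetric orbit function $\sum_j \ell(g_j(x,y))$ vanishes for odd $p$, so $x$ is not a polynomial in them. For $p=3,5$ you therefore need an extra input, for instance showing that $F\in\operatorname{Frac}(A)$ and that $A$ is normal. Otherwise the conclusion must be weakened to generation up to purely inseparable closure.

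\textbf{The other half.} Remark~\ref{genericallyreconstruction} is not available here: it concerns a finite group acting on the variety, whereas the slice maps do not form a group and $\tilde G$ is infinite. Equality of fraction fields would not give equality of rings anyway. The step is also unnecessary. Let $\Pi(s_{k,\mu})$ denote $s_{k,\mu}$ evaluated on the $60(p-1)$ normalizations of a point of $U$. If $\Pi(s_{k,\mu})$ is regular on $U$, it is a $\tilde G$-invariant restricting to $s_{k,\mu}$, and you are done.

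That regularity is less delicate than you fear. Normalize $y^p-y=P_5/Q_5$ by sending an ordered triple of roots $(\rho_1,\rho_2,\rho_3)$ of $Q_5$ to $(\infty,0,1)$. This only divides by root differences $\rho_i-\rho_j$ and by the $Q_5'(\rho_i)$ occurring in the residues. After symmetrizing over all ordered triples, the denominators are essentially powers of the discriminant of the binary form $Q_5$, which does not vanish on $U$. Do not clear these denominators, since that would turn $s_{k,\mu}$ into $D^N s_{k,\mu}$; you only need them to be invertible on $U$.
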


\section{Conclusion}\label{sec:conclusion}

The theory of invariants for Artin-Schreier curves is surprisingly rich, presenting both conceptual and computational challenges even for relatively small genus.  Computationally, the main obstacles are working with algebraic extensions of polynomials rings or such rings over function field. Perhaps even more importantly, the number of generators of the invariant rings grows very rapidly with the genus, making it infeasible to prove that the sets of invariants that we compute are minimal; this requires Gr\"oebner basis computations in as many variables as the number of invariants.
Theoretically, there is a need to build a stronger theory of specializing invariants of infinite groups to models where the action can no longer described as a group, as we saw in several cases.  With these advances, invariant computation would be within reach for many more curves.

\bibliographystyle{alpha}
\bibliography{synthbib.bib}

\end{document}